\documentclass[11pt]{amsart}

\usepackage[a4paper,top=20mm,bottom=30mm,left=30mm,right=30mm,includehead]{geometry}

\usepackage{xcolor}
\usepackage{hyperref}
\usepackage{amsmath, amssymb, amsthm, amsfonts}
\usepackage{upgreek}

\usepackage{listings}
\hypersetup{
    colorlinks=true,
    pdftitle={Theta Relations},
    pdfauthor={Rong Chen, Haigang Zhou, Xiao-Jie Zhu},
    pdfsubject={Mathematics},
}

\numberwithin{equation}{section}

\theoremstyle{plain}
\newtheorem{thm}{Theorem}[section]
\newtheorem*{mthm*}{Main Theorem}
\newtheorem*{thm*}{Theorem}
\newtheorem{coro}[thm]{Corollary}

\newtheorem{lemm}[thm]{Lemma}

\theoremstyle{definition}

\newtheorem{examp}[thm]{Example}

\theoremstyle{remark}
\newtheorem{rema}[thm]{Remark}

\newcommand\rmi{\mathrm i}

\newcommand\etp[1]{\mathfrak{e}\left(#1\right)}
\newcommand\tr{\mathop{\mathrm{tr}}}
\newcommand\numZ{\mathbb Z}
\newcommand\numQ{\mathbb Q}
\newcommand\numR{\mathbb R}
\newcommand\numC{\mathbb C}
\newcommand\F{\mathbb F}

\newcommand*\abs[1]{\lvert#1\rvert}

\newcommand\glnZ[1]{\mathrm{GL}_{#1}(\mathbb{Z})}

\newcommand\tbtmat[4]{\left(\begin{smallmatrix}{#1} & {#2} \\ {#3} & {#4}\end{smallmatrix}\right)}
\newcommand\tbtMat[4]{\left(\begin{matrix}{#1} & {#2} \\ {#3} & {#4}\end{matrix}\right)}

\makeatletter
\@namedef{subjclassname@2020}{\textup{2020} Mathematics Subject Classification}
\makeatother

\begin{document}

\title{Theta Relations}

\author{Rong Chen}
\address{Department of Mathematics, Shanghai Normal University}
\email{rchen@shnu.edu.cn}

\author{Haigang Zhou}
\address{School of Mathematical Sciences, Tongji University}
\email{haigangz@tongji.edu.cn}

\author{Xiao-Jie Zhu}
\address{School of Mathematical Sciences,
Key Laboratory of MEA (Ministry of Education) \& Shanghai Key Laboratory of PMMP,
East China Normal University}
\email[Corresponding author]{zhuxiaojiemath@outlook.com}

\subjclass[2020]{Primary 11F27; Secondary 11F46, 11F50, 20K01}

\keywords{theta series; theta relation; cubic theta function; Jacobi form; Siegel modular form; root lattice}

\begin{abstract}
Motivated by the classical quartic and cubic theta identities, we establish a product formula for Siegel--Jacobi theta series associated with rationally isometric positive-definite quadratic spaces. Our main theorem expresses a product of theta series of arbitrary Siegel degree as a finite double character sum over quotients of commensurable lattices. For the root lattice \(A_2\), it yields a finite Fourier identity for cubes of cubic theta functions, a new proof of the two-variable cubic identity, and seven independent relations in degree two. A second specialization gives a new square analogue of the two-variable cubic identity. At the origin it reduces to \(a(q)a(q^2)=b(q)b(q^2)+c(q)c(q^2)\); in degree two, the same seven relation patterns recur with each cube replaced by the corresponding product at \(\tau\) and \(2\tau\). We further extend the cubic identities on \(A_2\) to \(\ell\)-th-power identities on \(A_{\ell-1}\) for every \(\ell\geq3\), and to \(2\ell\)-th-power identities on \(A_{\ell-1}\) for every \(\ell\geq2\), in arbitrary Siegel degree. The doubled-order construction recovers Jacobi's quartic identity and gives its analogues in arbitrary Siegel degree.
\end{abstract}

\maketitle

\tableofcontents

\section{Introduction}
\label{sec:Introduction}
For \(\abs q<1\), define
\[
 \vartheta_2(q)=\sum_{k\in\numZ}q^{(k+1/2)^2},\qquad
 \vartheta_3(q)=\sum_{k\in\numZ}q^{k^2},\qquad
 \vartheta_4(q)=\sum_{k\in\numZ}(-1)^kq^{k^2}.
\]
Jacobi~\cite[p.~90, Eq.~(14)]{Jac12} proved the fundamental quartic
identity
\begin{equation*}
 \vartheta_3(q)^4=\vartheta_2(q)^4+\vartheta_4(q)^4;
\end{equation*}

J.~M. Borwein and P.~B. Borwein~\cite{BB91} discovered a cubic
counterpart naturally associated with the root lattice
\(A_2\).  Let
\[
 Q(m,n)=m^2+mn+n^2,\qquad
 \omega=\exp(2\uppi\rmi/3),
\]
and define
\begin{equation*}
 \begin{aligned}
 a(q)&=\sum_{m,n\in\numZ}q^{Q(m,n)},\\
 b(q)&=\sum_{m,n\in\numZ}\omega^{m-n}q^{Q(m,n)},\\
 c(q)&=\sum_{m,n\in\numZ}q^{Q(m+1/3,n+1/3)}.
 \end{aligned}
\end{equation*}
The Borweins~\cite[Eq.~(2.3)]{BB91} proved that
\begin{equation}
\label{eq:intro Borwein identity}
 a(q)^3=b(q)^3+c(q)^3,
\end{equation}
so the fourth-power relation for the classical one-dimensional theta
functions is replaced by a third-power relation attached to \(A_2\).
This identity is fundamental in the theory of elliptic functions to
the cubic base; see \cite{BB91,BBG94,HGB93}.

Schultz~\cite{Sch13} introduced nine cubic theta functions in two complex
variables, four of which play the role of the four classical theta
functions.  Following Schultz~\cite[pp.~621--622]{Sch13}, let
\(\etp{v}:=\exp(2\uppi\rmi v)\), \(\tau\in\mathfrak H_1\) (the upper half-plane),
\(q=\etp{\tau}\), \(\boldsymbol u=(u_1,u_2)^T\in\numC^2\),
\(x=\etp{u_1}\), and \(y=\etp{u_2}\), and define
\begin{equation}
\label{eq:intro Schultz functions}
\begin{aligned}
 \Theta_1(\boldsymbol u;q)
 &=\sum_{m,n\in\numZ}
   \omega^{1+m-n}q^{Q(m+1/3,n+1/3)}
   x^{m+1/3}y^{n+1/3},\\
 \Theta_2(\boldsymbol u;q)
 &=\sum_{m,n\in\numZ}
   q^{Q(m+1/3,n+1/3)}
   x^{m+1/3}y^{n+1/3},\\
 \Theta_3(\boldsymbol u;q)
 &=\sum_{m,n\in\numZ}q^{Q(m,n)}x^my^n,\\
 \Theta_4(\boldsymbol u;q)
 &=\sum_{m,n\in\numZ}
   \omega^{m-n}q^{Q(m,n)}x^my^n.
\end{aligned}
\end{equation}
The fractional powers are interpreted through the additive variables
\(\tau,u_1,u_2\).  These four functions are the cubic analogues of
the four classical Jacobi theta functions in the sense developed by
Schultz~\cite{Sch13}.  Schultz~\cite[Corollary~4.3]{Sch13} proved that
\begin{equation}
\label{eq:intro Schultz identity}
 \Theta_1(\boldsymbol u;q)^3+\Theta_3(\boldsymbol u;q)^3
 =\Theta_2(\boldsymbol u;q)^3+\Theta_4(\boldsymbol u;q)^3.
\end{equation}
The definitions give
\[
 \Theta_1(\boldsymbol0;q)=0, \qquad
 \Theta_2(\boldsymbol0;q)=c(q),\qquad
 \Theta_3(\boldsymbol0;q)=a(q),\qquad
 \Theta_4(\boldsymbol0;q)=b(q).
\]
Thus \eqref{eq:intro Schultz identity} specializes to
\eqref{eq:intro Borwein identity}.  The identity
\eqref{eq:intro Schultz identity} was subsequently revisited by Chan,
Chan, Liu, and Zudilin~\cite{CCLZ26}, who gave two further proofs and
several related identities.  Zhu~\cite[Theorem~7.12]{Zhu23} gave a
computational proof of this identity.  Zhu~\cite[Theorem~7.11]{Zhu23}
also proved an analogous identity for the \(D_4\) lattice.

Our main result is a general theta-series identity whose matrix
specializations yield all the identities above, among others.

To describe the scope, let \(M\) be a symmetric positive-definite
rational matrix of size \(m\), let \(n\geq1\), and let
\(a,b\in\numR^{m,n}\).  We consider the Siegel--Jacobi theta series
\[
 \Theta_{M,n}[a,b](\tau,z)
 =\sum_{g\in\numZ^{m,n}}
 \etp{\tr\left(
 \frac12\tau(g+a)^TM(g+a)+(g+a)^TM(z+b)\right)},
\]
for \((\tau,z)\in\mathfrak H_n\times\numC^{m,n}\).  For the notation,
see Subsection~\ref{subsec:notation}.  Ziegler~\cite{Zie89} studied
theta series of this type from the viewpoint of Jacobi forms of
higher degree; see Subsection~\ref{sec:Jacobi Theta series} for the
basic properties used here.

The main result is Theorem~\ref{thm:main}; we first state a simplified
version.
Let \(n,m,\mu\) be positive integers, let \(M_1,M_2\) be symmetric
positive-definite rational matrices of size \(m\), and let
\(\tau\in\mathfrak H_n\) and
\(a_i,b_i\in\numR^{m,n}\), \(z_i\in\numC^{m,n}\), for
\(i=1,2,\ldots,\mu\).  Suppose that
\(A\in\mathrm{GL}_{m\mu}(\numQ)\) satisfies
\(A^TM_1^{(\mu)}A=M_2^{(\mu)}\).
Define
\begin{align*}
(a_1'^{T},\cdots,a_\mu'^T)&=(a_1^{T},\cdots,a_\mu^T)A^{-T},\\
(b_1'^{T},\cdots,b_\mu'^T)&=(b_1^{T},\cdots,b_\mu^T)A^{-T},\\
(w_1^{T},\cdots,w_\mu^T)&=(z_1^{T},\cdots,z_\mu^T)A^{-T}.
\end{align*}
Then
\begin{equation}
\label{eq:main_intro}
\prod_{i=1}^{\mu}\Theta_{M_1,n}[a_i,b_i](\tau,z_i)=
\sum_{\alpha,\beta}c_{\alpha,\beta}\prod_{i=1}^{\mu}\Theta_{M_2,n}[a_i'+\alpha_i,b_i'+\beta_i](\tau,w_i),
\end{equation}
where $(\alpha_1^{T},\cdots,\alpha_\mu^T):=\alpha^T$ and $(\beta_1^{T},\cdots,\beta_\mu^T):=\beta^T$.
Here $\alpha$ and $\beta$ range over certain finite subsets of
$\numQ^{m\mu,n}$, and the coefficients $c_{\alpha,\beta}$ are
constant.
See Theorem~\ref{thm:main} for the precise statement.

The proof relies on a lemma of Igusa~\cite[p.~136, Lemma~1]{Igu72}.
Igusa~\cite[pp.~137 and~139]{Igu72} used this lemma to derive two
classical theta relations of Riemann.  These relations are also special cases
of our main theorem, obtained by specializing \(M_1=(1)\); see
Theorems~\ref{thm:bilinear-relation} and~\ref{thm:quadrilinear}.  In
addition to these classical formulas, theta relations have been studied
extensively; see, for example, \cite{CGG15,Fre91,GW25,Igu79}.
Gritsenko and Wang~\cite{GW25}, in particular, show that certain finite
sums of products of theta series are holomorphic Jacobi forms and then
derive theta relations from the theory of Jacobi forms.  By contrast,
the generic theta relation \eqref{eq:main_intro} can be proved without
using the modular
and elliptic transformation laws of
\(\Theta_{M,n}[a,b](\tau,z)\).  The proof requires neither \(q\)-series
techniques nor complex analysis beyond the absolute convergence needed
to rearrange the defining series.  Thus our approach is essentially
algebraic once absolute convergence is known.  Consequently, the proof
of \eqref{eq:main_intro} is uniform for all \(n\) and all
\(M_1,M_2,A\) satisfying \(A^TM_1^{(\mu)}A=M_2^{(\mu)}\), and we
obtain applications in arbitrary Siegel degree (genus) \(n\), thereby
extending identities previously known only in degree one.

\subsection{Applications}

All families of theta identities below arise by specializing the main
formula \eqref{eq:main_intro}.  The procedure is as follows:
\begin{enumerate}
  \item Choose an appropriate $A$.
  \item Specify suitable representative sets for $\alpha$ and $\beta$.
  \item Identify theta series on the right-hand side that are proportional to each other.
  \item Combine the relations linearly into the desired relation.
\end{enumerate}

\emph{The first application} concerns the \(A_2\) lattice.  Write
\(\F_3=\numZ/3\numZ\).  For
\(\mathbf p,\mathbf q\in\F_3^n\), we define
\begin{equation*}
 \Theta^{(n)}_{\mathbf p,\mathbf q}(\tau,z)
 :=\Theta_{M,n}[\rho\mathbf p,\sigma\mathbf q](\tau,z),
 \qquad
 \theta^{(n)}_{\mathbf p,\mathbf q}(\tau)
 :=\Theta^{(n)}_{\mathbf p,\mathbf q}(\tau,0),
\end{equation*}
where $M=\tbtmat{2}{-1}{-1}{2}$ is the Gram matrix of the $A_2$ lattice,
and where \(\rho=(1/3,-1/3)^T\) and
\(\sigma=(1/3,1/3)^T\).  When \(n=1\), these nine functions
correspond, after a linear change of Jacobi variables and a phase
normalization, to those studied by Schultz~\cite{Sch13}; see
Example~\ref{examp:cubic}.  Let \(C=\tbtmat{1}{1}{-1}{2}\).
Our first identity is
\begin{equation}
\label{eq:intro_app1}
 3^n\Theta^{(n)}_{\mathbf p,\mathbf q}(\tau,z)^3
 =\sum_{\mathbf u,\mathbf v\in\F_3^n}
  \omega^{\mathbf p\cdot\mathbf v+\mathbf q\cdot\mathbf u}
  \Theta^{(n)}_{\mathbf u,\mathbf v}(\tau,Cz)
  \theta^{(n)}_{\mathbf u,\mathbf v}(\tau)^2.
\end{equation}
This is proved in Theorem~\ref{r: cubic Fourier theorem}.  For $n=1$,
it immediately implies Schultz's identity
\eqref{eq:intro Schultz identity}; see Theorem~\ref{r: Schultz theorem}.

In degree two, we classify the theta constants
$\theta^{(2)}_{\mathbf p,\mathbf q}(\tau)$ into thirteen inequivalent
nonzero classes.  Using \eqref{eq:intro_app1}, we prove that
these theta series $\theta^{(2)}_{\mathbf p,\mathbf q}(\tau)$ satisfy seven independent
cubic relations.
See Theorem~\ref{r: degree two cubic theorem} for the precise statement
and proof.

To write one of the above cubic relations in expanded form, let
\[
 \tau=\begin{pmatrix}\tau_1&\tau_0\\\tau_0&\tau_2\end{pmatrix}
 \in\mathfrak H_2,
 \qquad q=\etp{\tau_1},\quad s=\etp{\tau_0},\quad
 t=\etp{\tau_2}.
\]
Then
\begin{align}
\label{eq:intro degree two cubic example}
&\left(
 \sum_{j,k,\ell,h\in\numZ}
 q^{Q(j,k)}
 s^{2j\ell+k\ell+jh+2kh}t^{Q(\ell,h)}
 \right)^3\\
={}&
 \left(
 \sum_{j,k,\ell,h\in\numZ}
 \omega^{j-k+\ell-h}q^{Q(j,k)}
 s^{2j\ell+k\ell+jh+2kh}t^{Q(\ell,h)}
 \right)^3 \notag\\
&+
 \left(
 \sum_{j,k,\ell,h\in\numZ+1/3}
 q^{Q(j,k)}s^{2j\ell+k\ell+jh+2kh}t^{Q(\ell,h)}
 \right)^3 \notag\\
&+
 \left(
 \sum_{\substack{j,k\in\numZ\\
                  \ell,h\in\numZ+1/3}}
 \omega^{j-k}q^{Q(j,k)}
 s^{2j\ell+k\ell+jh+2kh}t^{Q(\ell,h)}
 \right)^3 \notag\\
&+
 \left(
 \sum_{\substack{j,k\in\numZ+1/3\\
                  \ell,h\in\numZ}}
 \omega^{\ell-h}q^{Q(j,k)}
 s^{2j\ell+k\ell+jh+2kh}t^{Q(\ell,h)}
 \right)^3. \notag
\end{align}
All powers in this formula are understood through the additive
variables \(\tau_1,\tau_0,\tau_2\).  In the formal limit
\(t\to0\), the left-hand side tends to \(a(q)^3\), the first term on
the right tends to \(b(q)^3\), the next two terms vanish, and the last
term tends to \(c(q)^3\).  Thus this degree-two identity specializes
exactly to \eqref{eq:intro Borwein identity}.

\emph{The second application} is the following square counterpart of
\eqref{eq:intro_app1}:
\begin{equation}
\label{eq:intro_app2}
3^n\Theta^{(n)}_{\mathbf p,\mathbf q}(\tau,z)
\Theta^{(n)}_{\mathbf p,-\mathbf q}(2\tau,2z)
=\sum_{\mathbf u,\mathbf v\in\F_3^n}
\omega^{\mathbf p\cdot\mathbf v+\mathbf q\cdot\mathbf u}
\Theta^{(n)}_{\mathbf u,\mathbf v}(\tau,Cz)
\theta^{(n)}_{\mathbf u,\mathbf v}(2\tau).
\end{equation}
See Theorem~\ref{thm:square-counterpart} for the precise statement and
proof.  For \(\boldsymbol u=(u_1,u_2)^T\), define
\[
 \boldsymbol u^*=\begin{pmatrix}u_1-u_2\\-u_2\end{pmatrix}.
\]
An important consequence of \eqref{eq:intro_app2} is the new identity
\begin{equation}
\label{eq:intro Schultz square identity}
\begin{aligned}
 &\Theta_1(\boldsymbol u;q)\Theta_1(2\boldsymbol u^*;q^2)
 +\Theta_3(\boldsymbol u;q)\Theta_3(2\boldsymbol u^*;q^2)\\
 &\qquad=
 \Theta_2(\boldsymbol u;q)\Theta_2(2\boldsymbol u^*;q^2)
 +\Theta_4(\boldsymbol u;q)\Theta_4(2\boldsymbol u^*;q^2),
\end{aligned}
\end{equation}
where we adopt Schultz's notation \eqref{eq:intro Schultz functions}.
Theorem~\ref{r: Schultz square theorem} proves a more general form of
this identity.
At \(\boldsymbol u=\boldsymbol0\), it reduces to
\begin{equation*}
 a(q)a(q^2)=b(q)b(q^2)+c(q)c(q^2).
\end{equation*}
Borwein, Borwein, and Garvan~\cite[Theorem~2.6(i)]{BBG94} proved this
identity.  To the best of our knowledge, the
identity \eqref{eq:intro Schultz square identity} with two independent Jacobi
variables is new.

In degree two, the same seven relation patterns recur for the square
counterparts, with each cube \(\theta(\tau)^3\) replaced by
\(\theta(\tau)\theta(2\tau)\); see
Theorem~\ref{r: degree two square theorem}.

\emph{The third application} concerns root lattices of arbitrary rank.
Let $\ell\geq2$, $d=\ell-1$, and $\xi=\etp{1/\ell}$.  Suppose that
$\boldsymbol\lambda,\boldsymbol\kappa\in(\numZ/\ell\numZ)^n$ satisfy
$\boldsymbol\lambda\cdot\boldsymbol\kappa\not\equiv0\pmod\ell$.
Let $M_\ell$ be the Gram matrix of the $A_d$ lattice; see
\eqref{r: p data}, and let
$\boldsymbol r=\begin{pmatrix}1&2&\cdots&d\end{pmatrix}^{T}$.  Then

\begin{equation}
\label{eq:intro_app3_2l}
 \sum_{\substack{\boldsymbol a\in(\numZ/\ell\numZ)^n,
                   \,\boldsymbol c\in\{0,1\}^n\\
                   \boldsymbol b_1,\ldots,\boldsymbol b_{d-1}
                   \in(\numZ/2\ell\numZ)^n}}
 \xi^{\boldsymbol\lambda\cdot\boldsymbol a+
       \boldsymbol\kappa\cdot
       (\boldsymbol b_1+\cdots+\boldsymbol b_{d-1}+\boldsymbol c)}
 \Theta_{M_\ell,n}^{\,2\ell}
 \!\left[\frac{\boldsymbol r\boldsymbol a}{\ell},
          \frac{B'}{2\ell}\right](\tau,z)=0,
\end{equation}
where \(B'=(\boldsymbol b_1^T,\ldots,\boldsymbol b_{d-1}^T,\boldsymbol c^T)^T\).
If, moreover, \(\ell\geq3\) and every
component of \(\boldsymbol\kappa\) lies in
\(2\numZ/\ell\numZ\) when $\ell$ is even, then
\begin{equation}
\label{eq:intro_app3_l}
 \sum_{\substack{\boldsymbol a,\boldsymbol b_1,\ldots,
                  \boldsymbol b_{d-1}
                  \in(\numZ/\ell\numZ)^n}}
 \xi^{\boldsymbol\lambda\cdot\boldsymbol a+
       \boldsymbol\kappa\cdot
       (\boldsymbol b_1+\cdots+\boldsymbol b_{d-1})}
 \Theta_{M_\ell,n}^{\,\ell}
 \!\left[\frac{\boldsymbol r\boldsymbol a}{\ell},
          \frac{B}{\ell}\right](\tau,z)=0,
\end{equation}
where \(B=(\boldsymbol b_1^T,\ldots,\boldsymbol b_{d-1}^T,\boldsymbol 0^T)^T\).
The two identities are proved in
Theorems~\ref{r: doubled order power identity} and
\ref{r: unified p power identity}, respectively.

For every \(\ell\geq3\), the formula \eqref{eq:intro_app3_l} extends
the third-power relations on \(A_2\) to \(\ell\)-th-power relations on
\(A_{\ell-1}\), and \eqref{eq:intro_app3_2l} produces
\(2\ell\)-th-power identities valid also for \(\ell=2\).  The case
\(\ell=2\) recovers
Jacobi's quartic identity~\cite[p.~90, Eq.~(14)]{Jac12} in degree one
and gives analogues involving a Jacobi variable and valid in arbitrary
Siegel degree.  In degree two it yields five independent relations among the
fourth powers of the ten even theta constants; comparison with Cl\'ery,
van der Geer, and Grushevsky~\cite[Section~6, Eq.~(6.1)]{CGG15} shows
that they form a basis of all
linear relations among these fourth powers.  Moreover, we classify the
degree-one theta constants associated with \(A_3\) and \(A_4\) and
provide explicit fourth- and fifth-power
identities, which are specializations of \eqref{eq:intro_app3_l}.

The paper is organized as follows.  Section~2 collects the required
facts about finite abelian groups and commensurable lattices, and
introduces the Siegel--Jacobi theta series used throughout.
The final subsection of Section~2 states and proves the main product
formula.
Sections~3 and~4 treat the cubic identities and their square
counterparts, respectively.  The same-order identities on root
lattices are developed in Section~5, and the doubled-order identities,
including the quartic specializations, are considered in Section~6.

\subsection{Notation}
\label{subsec:notation}

Let \(R\) be a commutative ring, and let \(n,m\) be positive integers.
The symbol \(R^{n,m}\) denotes the additive group of \(n\times m\)
matrices over \(R\), \(\mathrm{GL}_n(R)\) the group
of invertible matrices of size \(n\), and, when \(R=\numQ\) or
\(R=\numR\), \(\mathrm{GL}^{\mathrm{pos}}_n(R)\) the set of
symmetric positive-definite matrices of size \(n\).  The symbol
\(\mathrm{M}^{\mathrm{sym}}_{n,n}(R)\) denotes the set of
\(n\times n\) symmetric matrices.  For a matrix \(A\), \(A^T\)
denotes its transpose, and \(A^{-T}\) denotes its inverse transpose
(if it exists). 


For $A=(a_{ij})\in R^{n,m}$ and $B\in R^{s,t}$, the Kronecker product
$A\otimes B$ is the $ns \times mt$ block matrix
$(a_{ij}\cdot B)_{1\leq i \leq n,1\leq j \leq m}$.  For a positive
integer $\mu$, define $B^{(\mu)}:=I_{\mu}\otimes B$, where $I_\mu$ is
the identity matrix of size $\mu$.  For matrices
$M_1,M_2,\dots,M_k$ of the same size, write
\[
[M_1,M_2,\dots,M_k]:=(M_1^T,M_2^T,\dots,M_k^T)^T.
\]
The symbol \(e_i\) denotes the \(i\)-th standard basis vector in the
ambient dimension determined by context.

The Siegel upper half-space of degree $n$ is
\begin{equation*}
\mathfrak{H}_n:=\{u+\rmi v\colon u\in \mathrm{M}^{\mathrm{sym}}_{n,n}(\numR), v \in \mathrm{GL}^{\mathrm{pos}}_n(\numR)\}.
\end{equation*}

We use the notation $\etp{x}:=\exp(2\uppi\rmi x)$ and
$q^r:=\exp(2\uppi\rmi r\tau)$ for $\tau\in\mathfrak{H}_1$ and
$r\in\numR$.  For
$\tau=\tbtmat{\tau_{1}}{\tau_{0}}{\tau_{0}}{\tau_{2}}\in\mathfrak{H}_2$,
we use $q^r:=\exp(2\uppi\rmi r\tau_1)$,
$s^r:=\exp(2\uppi\rmi r\tau_0)$, and
$t^r:=\exp(2\uppi\rmi r\tau_2)$.  The condition
$\tau\in\mathfrak{H}_2$ is equivalent to $\Im(\tau_1)>0$,
$\Im(\tau_2)>0$, and
$\Im(\tau_1)\Im(\tau_2)-\Im(\tau_0)^2>0$.

\section{The generic formula}

\subsection{Prerequisites: groups and lattices}
\label{sec:Prerequisites: groups and lattices}
Let $G$ be a finite abelian group.  Its group of linear characters is
denoted by $\widehat{G}$ and consists of all homomorphisms from $G$ to
$\numC^\times$, the multiplicative group of nonzero complex numbers.
It is well known that $G \cong \widehat{G}$ noncanonically; each cyclic
decomposition of $G$ determines an isomorphism.  Theorem~\ref{thm:main}
relies on the following lemma of Igusa.
\begin{lemm}
\label{lemm:L1sum}
Let $L$ be an abelian group written additively, and let $L_1$ and $L_2$
be finite-index subgroups.  Suppose that
\begin{equation*}
\Phi\colon L \to \numC
\end{equation*}
is an $L^1$-function; that is,
$\sum_{g \in L}\vert\Phi(g)\vert<+\infty$.  Then
\begin{equation*}
[L\colon L_1]\sum_{g\in L_1}\Phi(g)=\sum_{\chi\in\widehat{L/L_1},g\in L/L_2}\left(\sum_{h\in L_2}\chi(g+h)\Phi(g+h)\right).
\end{equation*}
\end{lemm}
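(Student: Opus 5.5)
The plan is to reduce the identity to the orthogonality relations for characters of the finite abelian group $L/L_1$, after reorganizing the double sum on the right-hand side into a single sum over $L$.

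\emph{Well-definedness.} A character $\chi\in\widehat{L/L_1}$ is regarded as a function on $L$ via $x\mapsto\chi(x+L_1)$. For a fixed $\chi$, set $F_\chi(x)=\chi(x)\Phi(x)$. Since $\abs{\chi(x)}=1$, we have $\abs{F_\chi}=\abs{\Phi}$, so $F_\chi$ is again an $L^1$-function on $L$. Hence each inner sum $\sum_{h\in L_2}F_\chi(g+h)$ converges absolutely. It depends only on the coset $g+L_2$, because replacing $g$ by $g+h_0$ with $h_0\in L_2$ merely reindexes $h\mapsto h+h_0$, which is a bijection of $L_2$. So the right-hand side makes sense for any choice of representatives of $L/L_2$.

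\emph{Collapsing the sum.} Fix a set of representatives $R$ for $L/L_2$; it is finite since $[L:L_2]<\infty$. The map $R\times L_2\to L$, $(g,h)\mapsto g+h$, is a bijection. By absolute convergence we may therefore combine the finite sum over $g$ with the sum over $h$:
\[
\sum_{g\in R}\sum_{h\in L_2}F_\chi(g+h)=\sum_{x\in L}\chi(x)\Phi(x).
\]
Summing this over the finitely many $\chi\in\widehat{L/L_1}$ and interchanging this finite sum with the absolutely convergent sum over $x$, the right-hand side becomes
\[
\sum_{x\in L}\Bigl(\sum_{\chi\in\widehat{L/L_1}}\chi(x+L_1)\Bigr)\Phi(x).
\]

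\emph{Orthogonality.} Because $L/L_1$ is a finite abelian group of order $[L:L_1]$, the character sum $\sum_\chi\chi(y)$ equals $[L:L_1]$ if $y=0$ and $0$ otherwise. The second case uses that characters of a finite abelian group separate points: for $y\neq0$ pick $\chi_0$ with $\chi_0(y)\neq1$, and note that multiplying by $\chi_0$ permutes $\widehat{L/L_1}$. Hence the expression above equals $[L:L_1]\sum_{x\in L_1}\Phi(x)$, which is the left-hand side.

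There is no serious obstacle. The only points requiring care are the justification of the rearrangements, which are covered by the $L^1$ hypothesis together with the finiteness of both indices, and the independence of the choice of coset representatives checked at the start.
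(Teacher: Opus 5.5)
Your proof is correct. It is the standard argument: merge the finite sum over $L/L_2$ and the sum over $L_2$ into one absolutely convergent sum over $L$, swap it with the finite sum over $\widehat{L/L_1}$, and apply character orthogonality on $L/L_1$. The paper gives no proof of its own and instead cites Igusa's Lemma~1 (p.~136), whose proof is essentially this same argument.
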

Here $[L\colon L_1]$ denotes the index of $L_1$ in $L$, also written
$\vert{L/L_1}\vert$.  Igusa~\cite[p.~136, Lemma~1]{Igu72} proved this
lemma.

We shall use this lemma when \(L\) is a lattice---that is, a free
abelian group of finite rank.  In this paper, however, the terminology
``lattice'' will always refer to a \(\numZ\)-module \(L\) in a
background finite-dimensional vector space \(V\) over \(\numR\) or
\(\numQ\) such that \(L\) has a \(\numZ\)-basis which is simultaneously
a basis of \(V\).


Fix a space $V$ as above, and let $L_1$ and $L_2$ be lattices in $V$.
They are \emph{commensurable} if both $[L_1\colon L_1\cap L_2]$ and
$[L_2\colon L_1\cap L_2]$ are finite.  Commensurability is an
equivalence relation, and the sum and intersection of two commensurable
lattices are again lattices in $V$.  In particular, every finite-index
submodule of a lattice is a lattice.  For positive integers $m,n$, the
module $\numZ^{m,n}$ of $m\times n$ integral matrices is a lattice in
$\numR^{m,n}$.  The lattices in $\numR^{m,n}$ commensurable with
$\numZ^{m,n}$ are precisely those contained in $\numQ^{m,n}$; hence any
two lattices in $\numQ^{m,n}$ are commensurable.  Consequently, for any
nonsingular rational $m\times m$ matrix $A$, there is another such
matrix $\widetilde{A}$ satisfying
$\numZ^{m,n}+A\cdot\numZ^{m,n}=\widetilde{A}^{-1}\cdot\numZ^{m,n}$.

Applying Theorem~\ref{thm:main} requires an explicit choice of
$\widetilde{A}$ for a given $A$.


\begin{lemm}
\label{lemm:findA1}
Let $m,n$ be positive integers and let $A\in\mathrm{GL}_m(\numQ)$.
Then:
\begin{enumerate}
  \item[(a)] There exist $B_1,B_2\in\mathrm{GL}_m(\numZ)$ such that $B_1^{-1}AB_2$ is a diagonal matrix.
  \item[(b)] If $B_1^{-1}AB_2=\mathrm{diag}\left(\frac{q_1}{p_1},\ldots,\frac{q_m}{p_m}\right)$, where $p_i,q_i$ are positive integers with $\gcd(q_i,p_i)=1$, then $\numZ^{m,n}+A\cdot\numZ^{m,n}=\widetilde{A}^{-1}\cdot\numZ^{m,n}$ with $\widetilde{A}=\mathrm{diag}\left(p_1,\ldots,p_m\right)\cdot B_1^{-1}\in\numZ^{m,m}$. Moreover, $\abs{\det(\widetilde{A})}=\prod_{i=1}^mp_i$.
\end{enumerate}
\end{lemm}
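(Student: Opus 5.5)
Part (a) is the Smith normal form over \(\numZ\), applied after clearing denominators. Choose a positive integer \(N\) with \(NA\in\numZ^{m,m}\). The Smith normal form theorem (the elementary divisor theorem, since \(\numZ\) is a PID) gives \(B_1,B_2\in\glnZ{m}\) with \(B_1^{-1}(NA)B_2=\mathrm{diag}(d_1,\ldots,d_m)\). Here the \(d_i\) are nonzero integers because \(A\) is invertible. Dividing by \(N\) shows that \(B_1^{-1}AB_2\) is diagonal. Replacing \(B_1\) by \(B_1\) times a diagonal sign matrix, we may take each entry \(d_i/N\) to be positive. In lowest terms this entry is \(q_i/p_i\), which is the form assumed in (b).

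For (b), write \(D=\mathrm{diag}(q_1/p_1,\ldots,q_m/p_m)\), so that \(A=B_1DB_2^{-1}\). Since \(B_2^{-1}\in\glnZ{m}\), we have \(B_2^{-1}\numZ^{m,n}=\numZ^{m,n}\). Hence \(A\numZ^{m,n}=B_1D\numZ^{m,n}\). Likewise \(\numZ^{m,n}=B_1\numZ^{m,n}\), which gives
\[
\numZ^{m,n}+A\numZ^{m,n}=B_1\bigl(\numZ^{m,n}+D\numZ^{m,n}\bigr).
\]
The problem now reduces to the diagonal case. Because \(D\) is diagonal, left multiplication by \(D\) acts row by row. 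So \(\numZ^{m,n}+D\numZ^{m,n}\) is the set of matrices whose \(i\)-th row lies in \(\numZ^{1,n}+\frac{q_i}{p_i}\numZ^{1,n}\). For each entry, the identity \(\numZ+\frac{q}{p}\numZ=\frac1p(p\numZ+q\numZ)=\frac1p\numZ\) holds because \(\gcd(p,q)=1\) (Bézout). Hence \(\numZ^{m,n}+D\numZ^{m,n}=\mathrm{diag}(p_1,\ldots,p_m)^{-1}\numZ^{m,n}\).

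Combining the two steps, the sum equals \(B_1\,\mathrm{diag}(p_i)^{-1}\numZ^{m,n}=\bigl(\mathrm{diag}(p_i)B_1^{-1}\bigr)^{-1}\numZ^{m,n}=\widetilde A^{-1}\numZ^{m,n}\). The matrix \(\widetilde A\) is integral because \(B_1^{-1}\) is integral. Its determinant satisfies \(\abs{\det\widetilde A}=\prod p_i\cdot\abs{\det B_1^{-1}}=\prod p_i\), since \(\det B_1=\pm1\).

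No step here is a real obstacle; the only points needing care are the sign adjustment and the coprimality (Bézout) step.
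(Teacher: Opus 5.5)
Your proof is correct and follows essentially the same route as the paper: for (a) the paper also clears denominators, but it proves the diagonalization over $\numZ$ directly by row/column reduction and induction on $m$ where you cite the Smith normal form theorem, and for (b) it uses the same identity $\numZ^{m,n}+A\numZ^{m,n}=B_1(\numZ^{m,n}+D\numZ^{m,n})$. Your explicit B\'ezout argument for $\numZ^{m,n}+D\numZ^{m,n}=\mathrm{diag}(p_1,\ldots,p_m)^{-1}\numZ^{m,n}$, and your sign adjustment giving positive diagonal entries, fill in steps the paper leaves implicit.
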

\begin{proof}
For (a), after multiplying \(A\) by a nonzero integer, we may assume
that \(A\in\numZ^{m,m}\).  Choose a nonzero entry \(d\) of \(A\) and
use permutation matrices to move it to the \((m,m)\)-entry.  Using
elementary integral row and column operations, replace every other
entry in the last column and last row by its remainder modulo \(d\).
If a nonzero remainder occurs, move it to the \((m,m)\)-entry and
repeat.  The positive integer \(\lvert d\rvert\) strictly decreases
whenever this happens, so the process terminates.  At termination,
elementary row and column operations clear every off-diagonal entry in
the last row and column.  Thus there are
\(D_1,D_2\in\mathrm{GL}_m(\numZ)\) and a nonzero integer \(d\) such that
\[
 D_1^{-1}AD_2=\mathrm{diag}(A_0,d),
 \qquad A_0\in\numZ^{m-1,m-1}.
\]
Induction on \(m\) proves (a).

For (b), $B_j\numZ^{m,n}=\numZ^{m,n}$ for $j=1,2$ because
$B_j\in\mathrm{GL}_m(\numZ)$.  With
$D=\mathrm{diag}\left(\frac{q_1}{p_1},\ldots,\frac{q_m}{p_m}\right)$,
\begin{align*}
\numZ^{m,n}+A\numZ^{m,n}&=B_1\numZ^{m,n}+B_1B_1^{-1}AB_2B_2^{-1}\numZ^{m,n}=B_1\numZ^{m,n}+B_1D\numZ^{m,n}\\
&=B_1(\numZ^{m,n}+D\numZ^{m,n})=\widetilde{A}^{-1}\numZ^{m,n}.
\end{align*}
Finally, \(\lvert\det B_1\rvert=1\), so
\(\lvert\det\widetilde A\rvert=\prod_{i=1}^m p_i\).
\end{proof}
\begin{rema}
\label{rema:compute_AA1}
(1) If $A$ is integral, Lemma~\ref{lemm:findA1}(a) may be strengthened
by requiring each diagonal entry of $B_1^{-1}AB_2$ to be positive and
to divide the next.  The resulting matrix is the uniquely determined
Smith normal form of $A$.  In SageMath~\cite{Sag26}, it can be computed
by calling \lstinline!A.smith_form(integral=True)!; the transformation
matrices $B_1$ and $B_2$ are also available, as described in the
documentation returned by \lstinline!A.smith_form?!.

(2) If an integral matrix $\widetilde{A}$ satisfies
$\numZ^{m,n}+A\cdot\numZ^{m,n}=\widetilde{A}^{-1}\cdot\numZ^{m,n}$,
then $G\widetilde{A}$ may replace $\widetilde{A}$ for any $G\in\glnZ{m}$,
since $(G\widetilde{A})^{-1}\cdot\numZ^{m,n}=\widetilde{A}^{-1}G^{-1}\cdot\numZ^{m,n}=\widetilde{A}^{-1}\cdot\numZ^{m,n}$.  Thus
$\widetilde{A}$ may be reduced to row Hermite normal form.  In
SageMath~\cite{Sag26}, the Hermite normal form of a matrix $B$ is
computed by calling \lstinline!B.hermite_form()!.
\end{rema}

\subsection{Jacobi theta series of arbitrary degree}
\label{sec:Jacobi Theta series}
Let $n,m$ be positive integers, let $M$ be an $m\times m$ symmetric
positive-definite matrix over $\numQ$, and let $a,b \in \numR^{m,n}$.
The theta series considered in this paper has the general form
\begin{multline}
\label{eq:SiegelTheta}
\Theta_{M,n}[a,b](\tau,z):=\sum_{g\in\numZ^{m,n}}\etp{\tr\left(\frac{1}{2}\tau(g+a)^T\cdot M\cdot(g+a)+(g+a)^{T}\cdot M\cdot(z+b)\right)},
\end{multline}
where $(\tau,z)\in \mathfrak{H}_n \times \numC^{m,n}$.  This series
converges normally\footnote{Andrianov and Zhuravl\"ev~\cite[Prop.~1.3]{AZ95}
prove convergence for $z=a=b=0$; the general case follows similarly.
See also Freitag~\cite[p.~388, Prop.~8.2]{Fre11}.} and hence defines a
holomorphic function on $\mathfrak{H}_n \times \numC^{m,n}$.  The
corresponding theta constant is
\[
\theta_{M,n}[a,b](\tau):=\Theta_{M,n}[a,b](\tau,0).
\]

For $a,b \in \numQ^{m,n}$, these theta series and their transformation
laws lie in the framework of Siegel--Jacobi forms; see
Ziegler~\cite{Zie89} and Zhou, Wang, and Lu~\cite{ZWL10}.  One aim of
this paper is to derive
polynomial relations among $\Theta_{M,n}[a,b](\tau,z)$ without using
modular forms, Jacobi forms, or $q$-series.

\begin{examp}
\label{examp:cubic}
Let $m=2$, $M=\tbtmat{2}{-1}{-1}{2}$, and $n=1$.  Then
\begin{multline*}
\Theta_{M,n}[(a_1,a_2),(b_1,b_2)](\tau,(z_1,z_2))=\etp{2a_1b_1-a_1b_2-a_2b_1+2a_2b_2}\\
\cdot\sum_{g_1,g_2\in\numZ}\etp{2g_1b_1-g_1b_2-g_2b_1+2g_2b_2}q^{(g_1+a_1)^2-(g_1+a_1)(g_2+a_2)+(g_2+a_2)^2}\zeta_1^{g_1+a_1}\zeta_2^{g_2+a_2},
\end{multline*}
where \(\zeta_1=\etp{2z_1-z_2}\) and
\(\zeta_2=\etp{-z_1+2z_2}\).  Directly from the definitions,
\[
\begin{aligned}
 \theta_{M,n}[(0,0),(0,0)]&=a(q),\\
 \theta_{M,n}[(0,0),(1/3,1/3)]&=b(q),\\
 \theta_{M,n}[(2/3,1/3),(0,0)]&=c(q).
\end{aligned}
\]
The Borweins~\cite{BB91} and Hirschhorn, Garvan, and
Borwein~\cite{HGB93} studied the functions \(a(q)\), \(b(q)\), and
\(c(q)\).  More generally, the functions
\(\Theta_{M,n}[a,b](\tau,z)\), where
\begin{align}
a&\in\{(0,0),\,(2/3,1/3),\,(1/3,2/3)\},\label{eq:cubicavalues}\\
b&\in\{(0,0),\,(1/3,1/3),\,(2/3,2/3)\},\label{eq:cubicbvalues}
\end{align}
correspond to the nine functions $\Theta_{ij}(\mathbf{u}\vert\tau)$
studied by Schultz~\cite[p.~630]{Sch13}.  See also
Zhu~\cite[\S~7.2]{Zhu23} and Chan, Chan, Liu, and
Zudilin~\cite{CCLZ26}.  Zhu~\cite[Eqs.~(7.76) and~(7.77)]{Zhu23}
records the representatives in \eqref{eq:cubicavalues} and
\eqref{eq:cubicbvalues}.
\end{examp}

\begin{examp}
For $m=1$ and $M=(1)$, the function $\Theta_{(1),n}[0,0]$ is the
Riemann theta function, which plays an important role in the theory of
abelian varieties; see, for example, \cite{Lan23}.
\end{examp}

We conclude this subsection by observing that two characteristic pairs
may determine the same theta series up to a constant factor.
\begin{lemm}
\label{lemm:twochar}
Suppose that $a_1,b_1,a_2,b_2 \in \numR^{m,n}$ satisfy $a_2-a_1\in\numZ^{m,n}$ and $b_2-b_1\in M^{-1}\numZ^{m,n}$. Then we have
\[
\Theta_{M,n}[a_2,b_2]=\etp{\tr(a_1^TM(b_2-b_1))}\Theta_{M,n}[a_1,b_1].
\]
In particular, if $a_1\in\frac{1}{N}\numZ^{m,n}$, where $N$ is a positive integer, then
\[
\Theta_{M,n}[a_2,b_2]^N=\Theta_{M,n}[a_1,b_1]^N.
\]
\end{lemm}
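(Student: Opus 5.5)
The plan is to prove the lemma by a direct manipulation of the defining series \eqref{eq:SiegelTheta}, in two separate moves: first change the first characteristic from $a_2$ to $a_1$, then change the second characteristic from $b_2$ to $b_1$. Absolute convergence of \eqref{eq:SiegelTheta} justifies reindexing the sum.

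\textbf{Step 1: the first characteristic.} Write $k:=a_2-a_1\in\numZ^{m,n}$. In the series for $\Theta_{M,n}[a_2,b_2](\tau,z)$, every summand depends on $g$ only through $g+a_2=(g+k)+a_1$. The map $g\mapsto g':=g+k$ is a bijection of $\numZ^{m,n}$. Reindexing by $g'$ therefore gives, term by term, the series for $\Theta_{M,n}[a_1,b_2](\tau,z)$. Hence $\Theta_{M,n}[a_2,b_2]=\Theta_{M,n}[a_1,b_2]$, with no phase factor.

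\textbf{Step 2: the second characteristic.} Write $b_2-b_1=M^{-1}h$ with $h\in\numZ^{m,n}$. The exponent of the summand indexed by $g$ in $\Theta_{M,n}[a_1,b_2]$ is the exponent of the corresponding summand in $\Theta_{M,n}[a_1,b_1]$ plus the extra term
\[
\tr\left((g+a_1)^TM M^{-1}h\right)=\tr(g^Th)+\tr(a_1^Th).
\]
Since $g$ and $h$ are integral, $\tr(g^Th)\in\numZ$, so $\etp{\tr(g^Th)}=1$. The remaining factor $\etp{\tr(a_1^Th)}=\etp{\tr(a_1^TM(b_2-b_1))}$ does not depend on $g$ and can be pulled out of the sum. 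Combining with Step 1 gives the first assertion.

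\textbf{The $N$-th power statement.} If $a_1\in\frac1N\numZ^{m,n}$, then $Na_1$ is integral, so $N\tr(a_1^Th)=\tr((Na_1)^Th)\in\numZ$. Thus the constant raised to the $N$-th power equals $1$, and the second assertion follows. There is no real obstacle here; the only point needing care is that the phase constant is expressed through $a_1$. This is consistent because Step 1 is carried out first and produces no phase.
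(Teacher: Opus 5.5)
Your proof is correct and is essentially the paper's argument. The paper performs the same reindexing $g=g'+a_1-a_2$ in the series for $\Theta_{M,n}[a_2,b_2]$ and leaves the rest as a ``direct calculation''; you carry that calculation out explicitly by splitting off the integral term $\tr(g^Th)$ and the constant phase $\etp{\tr(a_1^Th)}$.
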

\begin{proof}
Apply the change of variables $g=g'+a_1-a_2$ in the defining series for
$\Theta_{M,n}[a_2,b_2]$.  The result follows by direct calculation.
\end{proof}

\subsection{The main theorem}
\label{sec:The main theorem}
We now state and prove the main theorem.
\begin{thm}
\label{thm:main}
Let \(n,m,\mu\) be positive integers, let
\(M_1,M_2\in\mathrm{GL}_m^{\mathrm{pos}}(\numQ)\), and let
\(\tau\in\mathfrak H_n\) and
\(a_i,b_i\in\numR^{m,n}\), \(z_i\in\numC^{m,n}\), for
\(i=1,2,\ldots,\mu\).  Suppose that
\(A\in\mathrm{GL}_{m\mu}(\numQ)\) satisfies
\(A^TM_1^{(\mu)}A=M_2^{(\mu)}\).  Let \(\widetilde A\) be an
integral matrix of size \(m\mu\) such that
\(\numZ^{m\mu,n}+A\numZ^{m\mu,n}
=\widetilde A^{-1}\numZ^{m\mu,n}\).
Define
\begin{align*}
(a_1'^{T},\cdots,a_\mu'^T)&=(a_1^{T},\cdots,a_\mu^T)A^{-T},\\
(b_1'^{T},\cdots,b_\mu'^T)&=(b_1^{T},\cdots,b_\mu^T)A^{-T},\\
(w_1^{T},\cdots,w_\mu^T)&=(z_1^{T},\cdots,z_\mu^T)A^{-T}.
\end{align*}
Then
\begin{multline}
\label{eq:mainIdentity}
\abs{\det \widetilde{A}}^n\cdot\prod_{i=1}^{\mu}\Theta_{M_1,n}[a_i,b_i](\tau,z_i)=\\
\sum_{\alpha,\beta}\etp{-\tr\left(\sum_{i=1}^{\mu}a_i'^TM_2\beta_i\right)}\cdot\prod_{i=1}^{\mu}\Theta_{M_2,n}[a_i'+\alpha_i,b_i'+\beta_i](\tau,w_i),
\end{multline}
where \(\alpha=(\alpha_1^T,\ldots,\alpha_\mu^T)\) runs over a complete system of representatives for
\begin{equation}
\label{eq:alphaset}
(\widetilde{A}A)^{-1}\numZ^{m\mu,n}/\numZ^{m\mu,n},
\end{equation}
and \(\beta=(\beta_1^T,\ldots,\beta_\mu^T)\) runs over a complete system of representatives for
\begin{equation}
\label{eq:betaset}
A^{-1}{M_1^{(\mu)}}^{-1}\numZ^{m\mu,n}/
A^{-1}{M_1^{(\mu)}}^{-1}\widetilde{A}^T\numZ^{m\mu,n}.
\end{equation}
\end{thm}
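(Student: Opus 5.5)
The approach is to write the left-hand side of \eqref{eq:mainIdentity} as a single theta sum over a rational lattice, change variables by \(A\), and then apply Lemma~\ref{lemm:L1sum} to move from the lattice \(A^{-1}\numZ^{m\mu,n}\) to \(\numZ^{m\mu,n}\).

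\emph{Stacking.} Put \(a=[a_1,\dots,a_\mu]\), \(b=[b_1,\dots,b_\mu]\), \(z=[z_1,\dots,z_\mu]\), and similarly \(a',b',w\). With this notation the hypotheses say \(a'=A^{-1}a\), \(b'=A^{-1}b\), \(w=A^{-1}z\). Since \(M_1^{(\mu)}\) is block diagonal, the product \(\prod_i\Theta_{M_1,n}[a_i,b_i](\tau,z_i)\) equals
\[
\sum_{G\in\numZ^{m\mu,n}}\etp{\tr\left(\tfrac12\tau(G+a)^TM_1^{(\mu)}(G+a)+(G+a)^TM_1^{(\mu)}(z+b)\right)}.
\]

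\emph{Change of variables.} Substitute \(G=AH\), so that \(G+a=A(H+a')\) and \(H\) runs over \(L_1:=A^{-1}\numZ^{m\mu,n}\). The relation \(A^TM_1^{(\mu)}A=M_2^{(\mu)}\) turns the quadratic term into \(\tfrac12\tau(H+a')^TM_2^{(\mu)}(H+a')\). The linear term becomes \((H+a')^TA^TM_1^{(\mu)}(z+b)=(H+a')^TM_2^{(\mu)}(w+b')\), using \(A^TM_1^{(\mu)}=M_2^{(\mu)}A^{-1}\). Define
\[
\Phi(H)=\etp{\tr\left(\tfrac12\tau(H+a')^TM_2^{(\mu)}(H+a')+(H+a')^TM_2^{(\mu)}(w+b')\right)}
\]
on \(L:=L_1+\numZ^{m\mu,n}=A^{-1}(\numZ^{m\mu,n}+A\numZ^{m\mu,n})=(\widetilde AA)^{-1}\numZ^{m\mu,n}\). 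Then \(\Phi\) is absolutely summable over \(L\), by the normal convergence of \eqref{eq:SiegelTheta} (this is a theta series for a finer lattice). The left-hand side of \eqref{eq:mainIdentity}, without the factor \(\abs{\det\widetilde A}^n\), is \(\sum_{H\in L_1}\Phi(H)\).

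\emph{Applying Igusa's lemma.} Apply Lemma~\ref{lemm:L1sum} with this \(L\), \(L_1=A^{-1}\numZ^{m\mu,n}\) and \(L_2=\numZ^{m\mu,n}\). The index is \([L:L_1]=[\widetilde A^{-1}\numZ^{m\mu,n}:\numZ^{m\mu,n}]=\abs{\det\widetilde A}^n\). The cosets \(g\in L/L_2\) are exactly the representatives \(\alpha\) of \eqref{eq:alphaset}.

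\emph{The characters.} This is the step needing the most care. I will realize \(\widehat{L/L_1}\) through the pairing \((x,\beta)\mapsto\etp{\tr(x^TM_2^{(\mu)}\beta)}\).
\begin{itemize}
\item This pairing is trivial on \(L_1\) exactly when \(\beta\in M_2^{(\mu)-1}A^T\numZ^{m\mu,n}=A^{-1}M_1^{(\mu)-1}\numZ^{m\mu,n}\), by unimodularity of the trace pairing on \(\numZ^{m\mu,n}\).
\item It is trivial on all of \(L\) exactly when \(\beta\in M_2^{(\mu)-1}(\widetilde AA)^T\numZ^{m\mu,n}=A^{-1}M_1^{(\mu)-1}\widetilde A^T\numZ^{m\mu,n}\).
\end{itemize}
Hence the quotient in \eqref{eq:betaset} injects into \(\widehat{L/L_1}\). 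It is all of \(\widehat{L/L_1}\) by a cardinality count: both groups have order \(\abs{\det\widetilde A}^n\), the quotient because it is the image under \(A^{-1}M_1^{(\mu)-1}\) of \(\numZ^{m\mu,n}/\widetilde A^T\numZ^{m\mu,n}\). Equivalently, this is the standard duality between \(\numZ^{m\mu,n}/\widetilde A\numZ^{m\mu,n}\) and \(\widetilde A^{-T}\numZ^{m\mu,n}/\numZ^{m\mu,n}\).

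\emph{Assembling the terms.} For fixed \(\alpha\) and \(\beta\), write \(\etp{\tr((\alpha+h)^TM_2^{(\mu)}\beta)}=\etp{\tr((\alpha+h+a')^TM_2^{(\mu)}\beta)}\,\etp{-\tr(a'^TM_2^{(\mu)}\beta)}\). The first factor shifts \(b'\) to \(b'+\beta\) in \(\Phi\). Summing over \(h\in\numZ^{m\mu,n}\) then gives the stacked theta series with characteristic \([a'+\alpha,b'+\beta]\) and variable \(w\). By block-diagonality of \(M_2^{(\mu)}\) this factors as \(\prod_i\Theta_{M_2,n}[a_i'+\alpha_i,b_i'+\beta_i](\tau,w_i)\). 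The remaining constant is \(\etp{-\tr(\sum_ia_i'^TM_2\beta_i)}\), which yields \eqref{eq:mainIdentity}.
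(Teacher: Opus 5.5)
Your proposal is correct and is essentially the paper's proof: Igusa's lemma (Lemma~\ref{lemm:L1sum}) applied to the commensurable pair $\numZ^{m\mu,n}$, $A\numZ^{m\mu,n}$. The only difference in ordering is that you change variables by $A^{-1}$ before invoking the lemma (your $L_1=A^{-1}\numZ^{m\mu,n}$, $L_2=\numZ^{m\mu,n}$), whereas the paper does so afterwards. You also identify $\widehat{L/L_1}$ directly via annihilators under the pairing $\etp{\tr(x^T M_2^{(\mu)}\beta)}$, while the paper routes this through Lemma~\ref{lemm:thetaRelationLemma3} and the reparametrization $\beta\mapsto {M_2^{(\mu)}}^{-1}\widetilde{A}^{-T}M_1^{(\mu)}A\beta$; both produce the same characters.
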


We first prove a preliminary version.  Let
$n,m,\mu,M_1,M_2,a_i,b_i,z_i$ be as in Theorem~\ref{thm:main}.  For
$v_1,\cdots,v_\mu\in\numC^{m,n}$, define
\begin{equation}
\label{eq:bracket-notation}
[v_1,\cdots,v_\mu]:=(v_1^T,\cdots,v_\mu^T)^T\in\numC^{m\mu,n}
\end{equation}
for convenience.  For \(j=1,2\), we define the bilinear maps
\begin{align}
\label{eq:bilinear}
B_j\colon\numQ^{m\mu,n} \times \numQ^{m\mu,n} &\to \numQ^{n,n} \\
([v_1,\cdots,v_\mu], [v_1',\cdots,v_\mu']) &\mapsto \sum_{i=1}^{\mu}v_i^TM_jv_i',\notag
\end{align}
where \(v_i,v_i'\in\numQ^{m,n}\).  Equivalently,
$B_j(v,v')=v^TM_j^{(\mu)}v'$.  Let $\sigma$ be a $\numQ$-linear
isomorphism of $\numQ^{m\mu,n}$ satisfying\footnote{A more general
version may be obtained by assuming only that
$B_1(\sigma(v_1),\sigma(v_2))=r\cdot B_2(v_1,v_2)$ for all
$v_1,v_2\in\numQ^{m\mu,n}$ and some fixed $r\in\numQ$ with $r>0$.
This extension lies outside the scope of the paper.}
\begin{equation}
\label{eq:sigmaB1B2}
B_1(\sigma(v),\sigma(v'))=B_2(v,v')
\end{equation}
for every $v,v'\in\numQ^{m\mu,n}$.  Extend $B_j$ to
$\numC^{m\mu,n}\times\numC^{m\mu,n}$ by the same formula and extend
$\sigma$ canonically to a $\numC$-linear isomorphism of
$\numC^{m\mu,n}$.  Then \eqref{eq:sigmaB1B2} remains valid for
$v,v'\in\numC^{m\mu,n}$.  Define
\begin{align}
[a_1',\cdots,a_\mu']&=\sigma^{-1}[a_1,\cdots,a_\mu], \label{eq:asigma}\\
[b_1',\cdots,b_\mu']&=\sigma^{-1}[b_1,\cdots,b_\mu], \label{eq:bsigma}\\
[w_1,\cdots,w_\mu]&=\sigma^{-1}[z_1,\cdots,z_\mu]. \label{eq:zsigma}
\end{align}
Let
\begin{equation}
\label{eq:LandL1}
L_1=\numZ^{m\mu,n},\quad L_2=\sigma(\numZ^{m\mu,n}), \quad L=L_1+L_2.
\end{equation}
These are pairwise commensurable lattices in $\numQ^{m\mu,n}$.
\begin{lemm}
\label{lemm:thetaRelationLemma1}
If $\sigma$ satisfies the conditions above, then
\begin{multline}
\label{eq:thetaRelation}
\vert L/L_1\vert\cdot\prod_{i=1}^{\mu}\Theta_{M_1,n}[a_i,b_i](\tau,z_i)\\
=\sum_{\chi\in \widehat{L/L_1},g \in \sigma^{-1}(L)/L_1}\chi(\sigma(g))\sum_{h_1,\cdots,h_\mu\in\numZ^{m,n}}\chi(\sigma[h_1,\cdots,h_\mu])\\
\cdot\prod_{i=1}^{\mu}\etp{\tr\left(\frac{1}{2}\tau v_i^TM_2v_i+v_i^{T}M_2(w_i+b_i')\right)},
\end{multline}
where $v_i:=h_i+g_i+a_i'$ and $g_i\in\numQ^{m,n}$ is defined by $[g_1,\dots,g_\mu]:=g$.
\end{lemm}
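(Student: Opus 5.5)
The plan is to apply Igusa's Lemma~\ref{lemm:L1sum} to the lattice \(L=L_1+L_2\) from \eqref{eq:LandL1} with a suitable summand \(\Phi\), and then rewrite the resulting sum over \(L_2\) using \(\sigma\).

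\emph{Step 1: the left-hand side as a sum over \(L_1\).} Set \(a=[a_1,\dots,a_\mu]\), \(b=[b_1,\dots,b_\mu]\), \(z=[z_1,\dots,z_\mu]\). For \(x\in L\) define
\[
\Phi(x)=\etp{\tr\left(\tfrac12\tau B_1(x+a,x+a)+B_1(x+a,z+b)\right)}.
\]
Since \(B_1(v,v')=\sum_i v_i^TM_1v_i'\), the trace is additive over the blocks. Hence \(\sum_{x\in L_1}\Phi(x)\) is exactly the product \(\prod_i\Theta_{M_1,n}[a_i,b_i](\tau,z_i)\), the sum over \(x=[g_1,\dots,g_\mu]\in\numZ^{m\mu,n}\) factoring block by block.

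\emph{Step 2: absolute summability.} We need \(\Phi\in L^1(L)\). Because \(L\) is commensurable with \(L_1\), we have \(L\subset \frac1N\numZ^{m\mu,n}\) for some positive integer \(N\). Then \(\sum_{x\in L}\abs{\Phi(x)}\) is bounded by a sum of the same shape over \(\frac1N\numZ^{m\mu,n}\), which is a finite sum of absolute-value theta series with shifted characteristics. These converge by the normal convergence of \eqref{eq:SiegelTheta}; concretely, \(\Im\tau>0\) together with \(M_1>0\) gives Gaussian decay. I expect this to be the only analytic point, and it is routine.

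\emph{Step 3: Igusa's lemma and the change of variables.} Lemma~\ref{lemm:L1sum} gives
\[
\abs{L/L_1}\sum_{x\in L_1}\Phi(x)=\sum_{\chi\in\widehat{L/L_1}}\ \sum_{y\in L/L_2}\ \sum_{h\in L_2}\chi(y+h)\Phi(y+h).
\]
Since \(\sigma\) is an isomorphism carrying \(\numZ^{m\mu,n}=L_1\) onto \(L_2\), and \(\sigma^{-1}(L)\) onto \(L\), we may write \(y=\sigma(g)\) with \(g\in\sigma^{-1}(L)/L_1\) and \(h=\sigma[h_1,\dots,h_\mu]\) with \(h_i\in\numZ^{m,n}\). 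The character then factors as \(\chi(\sigma(g))\chi(\sigma[h_1,\dots,h_\mu])\).

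It remains to rewrite \(\Phi(\sigma(g+h'))\), where \(h'=[h_1,\dots,h_\mu]\). By \eqref{eq:asigma}--\eqref{eq:zsigma} we have \(a=\sigma(a')\), \(b=\sigma(b')\), \(z=\sigma(w)\), so \(x+a=\sigma(v)\) with \(v=g+h'+a'\), whose blocks are \(v_i=h_i+g_i+a_i'\). Applying \eqref{eq:sigmaB1B2} in its \(\numC\)-linear extension,
\[
B_1(\sigma v,\sigma v)=B_2(v,v),\qquad B_1(\sigma v,\sigma(w+b'))=B_2(v,w+b').
\]
Splitting \(B_2\) blockwise yields exactly the product \(\prod_i\etp{\cdots}\) in \eqref{eq:thetaRelation}.

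One bookkeeping point needs checking: the inner summand must be well defined on cosets of \(L_1\) in \(\sigma^{-1}(L)\), so that it is independent of the representative \(g\). Shifting \(g\) by an element of \(L_1\) is absorbed by reindexing the \(h_i\), and \(\chi\) is additive, so the total \(\chi(\sigma(g))\chi(\sigma h')=\chi(\sigma(g+h'))\) is unchanged. This matches Igusa's lemma, whose outer index is \(L/L_2\). With that checked, the identity \eqref{eq:thetaRelation} follows.
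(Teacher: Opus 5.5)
Your proposal is correct and follows essentially the same route as the paper: define $\Phi$ on $L$ as the product of the theta summands, get $\Phi\in L^1(L)$ from the finiteness of $L/L_1$, apply Lemma~\ref{lemm:L1sum}, reparametrize $L/L_2$ by $\sigma^{-1}(L)/L_1$ and $L_2$ by $\sigma(\numZ^{m\mu,n})$, and rewrite $\Phi(\sigma(g+h))$ using \eqref{eq:sigmaB1B2}--\eqref{eq:zsigma}. Writing $\Phi$ through $B_1$, and adding explicit checks of summability and of independence from the coset representative, only spells out these same steps in more detail.
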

\begin{proof}
For \(g=[g_1,\ldots,g_\mu]\in L\), define
\[
\Phi(g)=\prod_{i=1}^{\mu}\etp{\tr\left(\frac{1}{2}\tau (g_i+a_i)^TM_1(g_i+a_i)+(g_i+a_i)^{T}M_1(z_i+b_i)\right)}.
\]
Since \(L/L_1\) is finite and the defining theta series converges
absolutely on every coset of \(L_1\) (see
Subsection~\ref{sec:Jacobi Theta series}), \(\Phi\in L^1(L)\).
Lemma~\ref{lemm:L1sum} then implies
\begin{equation}
\label{eq:IgusaInserting}
\abs{L/L_1}\sum_{g\in L_1}\Phi(g)=\sum_{\chi\in\widehat{L/L_1},g\in L/L_2}\left(\sum_{h\in L_2}\chi(g+h)\Phi(g+h)\right).
\end{equation}
Here and below, sums indexed by quotient groups are taken over arbitrary
complete sets of representatives, and characters are evaluated on the
corresponding cosets.  The map
$L/L_2\rightarrow\sigma^{-1}(L)/L_1$ given by
$g+L_2\mapsto\sigma^{-1}(g)+L_1$ is a group isomorphism.  Hence
\begin{align}
&\sum_{\chi\in\widehat{L/L_1},g\in L/L_2}\left(\sum_{h\in L_2}\chi(g+h)\Phi(g+h)\right)\label{eq:tempSumchigh}\\
=&\sum_{\chi\in \widehat{L/L_1},g \in \sigma^{-1}(L)/L_1}\chi(\sigma(g))\sum_{h_1,\cdots,h_\mu\in\numZ^{m,n}}\chi(\sigma[h_1,\cdots,h_\mu])\Phi(\sigma(g+h)). \notag
\end{align}
Here $h=[h_1,\cdots,h_\mu]$.  By
\eqref{eq:sigmaB1B2}--\eqref{eq:zsigma},
\begin{align*}
\Phi(\sigma(g+h))=\prod_{i=1}^{\mu}\etp{\tr\left(\frac{1}{2}\tau v_i^TM_2v_i+v_i^{T}M_2(w_i+b_i')\right)}.
\end{align*}
Substitution into \eqref{eq:tempSumchigh} gives
\begin{equation}
\label{eq:RHSequality}
\text{RHS of \eqref{eq:IgusaInserting}}= \text{RHS of \eqref{eq:thetaRelation}}.
\end{equation}
Absolute convergence also gives
\begin{equation}
\label{eq:LHSequality}
\text{LHS of \eqref{eq:IgusaInserting}}= \text{LHS of \eqref{eq:thetaRelation}}.
\end{equation}
Equations \eqref{eq:IgusaInserting}, \eqref{eq:RHSequality}, and
\eqref{eq:LHSequality} now imply \eqref{eq:thetaRelation}.
\end{proof}

Theorem~\ref{thm:main} follows by applying the preceding lemma to a
specific $\sigma$.
\begin{lemm}
\label{lemm:thetaRelationLemma2}
Suppose $A$ is a rational matrix of size $m\mu$ such that
$A^T\cdot M_1^{(\mu)}\cdot A=M_2^{(\mu)}$.  Then the linear isomorphism
$\sigma(v)=Av$ satisfies \eqref{eq:sigmaB1B2}, and
\eqref{eq:thetaRelation} holds for this $\sigma$.
\end{lemm}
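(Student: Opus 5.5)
The plan is to verify the single hypothesis that Lemma~\ref{lemm:thetaRelationLemma1} places on \(\sigma\), namely \eqref{eq:sigmaB1B2}. Once that holds for \(\sigma(v)=Av\), the identity \eqref{eq:thetaRelation} follows with no further work.

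\emph{Linearity and invertibility.} The map \(\sigma(v)=Av\) is \(\numQ\)-linear on \(\numQ^{m\mu,n}\), since \(A\) is rational and multiplication is on the left. It is an isomorphism because \(A\) is invertible, which follows from taking determinants in
\[
A^TM_1^{(\mu)}A=M_2^{(\mu)}\qquad\text{with}\qquad \det M_2^{(\mu)}=(\det M_2)^\mu\neq0.
\]
Its canonical \(\numC\)-linear extension is again \(v\mapsto Av\) on \(\numC^{m\mu,n}\).

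\emph{The compatibility condition.} We use the matrix description \(B_j(v,v')=v^TM_j^{(\mu)}v'\) noted after \eqref{eq:bilinear}. This agrees with the block-sum definition because \(M_j^{(\mu)}=I_\mu\otimes M_j\) is block diagonal with blocks \(M_j\), and \([v_1,\ldots,v_\mu]\) stacks the blocks \(v_i\). Then, for all \(v,v'\),
\[
B_1(Av,Av')=v^TA^TM_1^{(\mu)}Av'=v^TM_2^{(\mu)}v'=B_2(v,v'),
\]
which is exactly \eqref{eq:sigmaB1B2}. Since this holds for rational \(v,v'\), it also holds for complex ones, because both sides are given by the same bilinear matrix formula. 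Lemma~\ref{lemm:thetaRelationLemma1} then applies and yields \eqref{eq:thetaRelation}.

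\emph{Consistency of notation.} With \(\sigma^{-1}v=A^{-1}v\), the primed quantities defined in \eqref{eq:asigma}--\eqref{eq:zsigma} are \([a_1',\ldots,a_\mu']=A^{-1}[a_1,\ldots,a_\mu]\), and similarly for \(b'\) and \(w\). Transposing gives
\[
(a_1'^T,\ldots,a_\mu'^T)=(a_1^T,\ldots,a_\mu^T)A^{-T},
\]
and likewise for \(b'\) and \(w\). These are precisely the definitions in Theorem~\ref{thm:main}. This step is not logically required for the lemma, but it confirms that the lemma feeds into the theorem correctly.

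There is no real obstacle. The only point needing care is the block-diagonal identification of \(B_j\) with \(v^TM_j^{(\mu)}v'\), which rests on the bracket convention \eqref{eq:bracket-notation}.
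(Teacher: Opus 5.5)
Your proposal is correct and follows the paper's proof: the computation \(B_1(Av,Av')=v^TA^TM_1^{(\mu)}Av'=v^TM_2^{(\mu)}v'=B_2(v,v')\), followed by an appeal to Lemma~\ref{lemm:thetaRelationLemma1}. Your extra remarks on the invertibility of \(A\) via determinants and on the consistency of the primed notation are valid but not needed by the paper's argument.
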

\begin{proof}
For arbitrary $v,v'\in\numQ^{m\mu,n}$,
\begin{equation*}
B_1(\sigma(v),\sigma(v'))=(Av)^TM_1^{(\mu)}(Av')=v^TM_2^{(\mu)}v'=B_2(v,v').
\end{equation*}
Thus \eqref{eq:sigmaB1B2} holds, and
Lemma~\ref{lemm:thetaRelationLemma1} gives \eqref{eq:thetaRelation}.
\end{proof}

The following lemma gives explicit expressions for $\chi(\sigma(g))$
and $\chi(\sigma(h))$ in \eqref{eq:thetaRelation}.
\begin{lemm}
\label{lemm:thetaRelationLemma3}
Under the assumptions and with the notation of Theorem~\ref{thm:main},
the map
\begin{align}
{M_2^{(\mu)}}^{-1}\widetilde{A}^{-T}\numZ^{m\mu,n}/{M_2^{(\mu)}}^{-1}\numZ^{m\mu,n}&\rightarrow\widehat{\widetilde{A}^{-1}\numZ^{m\mu,n}/\numZ^{m\mu,n}}\label{eq:isoDual}\\
b+{M_2^{(\mu)}}^{-1}\numZ^{m\mu,n}&\mapsto\left(v+\numZ^{m\mu,n}\mapsto\etp{\tr(b^TM_2^{(\mu)}\widetilde{A}v)}\right)\notag
\end{align}
is a group isomorphism.
\end{lemm}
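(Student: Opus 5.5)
The plan is to check, in order, that the map in \eqref{eq:isoDual} is well defined, that it is a homomorphism, that it is injective, and finally that it is bijective by counting. Write \(N:=M_2^{(\mu)}\) and \(\Lambda:=\numZ^{m\mu,n}\). Everything reduces to one fact about the trace pairing: for \(x\in\numQ^{m\mu,n}\),
\[
\tr(x^Ty)\in\numZ \text{ for all } y\in\Lambda \iff x\in\Lambda .
\]
This holds because \(\tr(x^Ty)=\sum_{j,k}x_{jk}y_{jk}\), and we may take \(y\) to be the elementary matrices.

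\emph{Well-definedness and homomorphism.} Take \(b\in N^{-1}\widetilde A^{-T}\Lambda\) and \(v\in\widetilde A^{-1}\Lambda\). Then \(b^TN\widetilde A v=(\widetilde A^{T}Nb)^T(\widetilde A v)\), and both \(\widetilde A^TNb\) and \(\widetilde Av\) lie in \(\Lambda\). So the trace is an integer and the value of the character is \(1\) when \(v\) is replaced by \(v+\Lambda\)-shifts in the relevant sense; more precisely:
\begin{itemize}
\item If \(v\in\Lambda\), then \(\widetilde Av\in\Lambda\) because \(\widetilde A\) is integral, and \(Nb=\widetilde A^{-T}c\) with \(c\in\Lambda\). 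Hence \(b^TN\widetilde Av=c^T\widetilde A^{-1}\widetilde A v=c^Tv\), whose trace is an integer. So the character is well defined on \(\widetilde A^{-1}\Lambda/\Lambda\).
\item If \(b\in N^{-1}\Lambda\), then \(b^TN=(Nb)^T\) is integral and \(\widetilde Av\in\Lambda\), so the character is trivial. Hence the map is well defined on the quotient by \(N^{-1}\Lambda\).
\end{itemize}
Additivity in \(b\) is immediate from the linearity of the trace, so the map is a homomorphism. Here we use that \(N\) is symmetric.

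\emph{Injectivity.} Suppose \(\etp{\tr(b^TN\widetilde Av)}=1\) for all \(v\in\widetilde A^{-1}\Lambda\). Writing \(v=\widetilde A^{-1}y\) with \(y\in\Lambda\), this says \(\tr((Nb)^Ty)\in\numZ\) for every \(y\in\Lambda\). By the fact above, \(Nb\in\Lambda\), that is, \(b\in N^{-1}\Lambda\).

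\emph{Bijectivity by counting.} Both groups are finite. The source \(N^{-1}\widetilde A^{-T}\Lambda/N^{-1}\Lambda\) is isomorphic, via multiplication by \(N\), to \(\widetilde A^{-T}\Lambda/\Lambda\). Its order is \(|\det\widetilde A|^n\), since \(\Lambda\subseteq\widetilde A^{-T}\Lambda\) as \(\widetilde A\) is integral, and the index of \(X\Lambda\) in \(\Lambda\) is \(|\det X|^n\) for \(X\) integral and nonsingular, acting column by column. Likewise, the target is the dual of \(\widetilde A^{-1}\Lambda/\Lambda\), which has order \(|\det\widetilde A|^n\), using \(|\widehat G|=|G|\). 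An injective homomorphism between finite groups of the same order is an isomorphism.

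The only mildly delicate point is confirming the inclusions \(\Lambda\subseteq\widetilde A^{-1}\Lambda\) and \(\Lambda\subseteq\widetilde A^{-T}\Lambda\), together with the index computation. Both follow from the integrality of \(\widetilde A\) and, if needed, the Smith normal form of Lemma~\ref{lemm:findA1}(a).
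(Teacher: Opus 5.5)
Your proof is correct and follows the paper's argument essentially step for step: you check well-definedness on both quotients and the homomorphism property, prove injectivity by testing the trace pairing against elementary matrices to force $M_2^{(\mu)}b$ to be integral, and get surjectivity by comparing orders $\lvert\det\widetilde A\rvert^n$; you even supply the index computation that the paper leaves implicit. One slip should be deleted: with $N=M_2^{(\mu)}$, your opening identity $b^TN\widetilde Av=(\widetilde A^TNb)^T(\widetilde Av)$ is false, because the right side equals $b^TN\widetilde A\widetilde Av$ (and if the identity held, every character would be trivial); your two bulleted checks are the correct argument and do not rely on it.
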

\begin{proof}
Because $\widetilde{A}$ is integral, the domain and codomain of
\eqref{eq:isoDual} are well-defined finite abelian groups.  Write
$\overline{b}:=b+{M_2^{(\mu)}}^{-1}\numZ^{m\mu,n}$ and
$\overline{v}:=v+\numZ^{m\mu,n}$, and define
\begin{equation}
\label{eq:chib}
\chi_{\overline{b}}(\overline{v}):=\etp{\tr(b^TM_2^{(\mu)}\widetilde{A}v)}.
\end{equation}
For $b\in{M_2^{(\mu)}}^{-1}\widetilde{A}^{-T}\numZ^{m\mu,n}$ and
$v_1,v_2\in \widetilde{A}^{-1}\numZ^{m\mu,n}$ with
$\overline{v_1}=\overline{v_2}$, we have
$\chi_{\overline{b}}(\overline{v_1})=\chi_{\overline{b}}(\overline{v_2})$
because $b^TM_2^{(\mu)}\widetilde{A}(v_1-v_2)$ is integral.  Thus
$\chi_{\overline{b}}$ is well-defined on
$\widetilde{A}^{-1}\numZ^{m\mu,n}/\numZ^{m\mu,n}$ and is independent of
the representative of $\overline{b}$.  Moreover,
$\chi_{\overline{b}}(\overline{v_1}+\overline{v_2})=\chi_{\overline{b}}(\overline{v_1})\cdot\chi_{\overline{b}}(\overline{v_2})$
and $\chi_{\overline{b}}(\overline{0})=1$; hence
$\chi_{\overline{b}}\in\widehat{\widetilde{A}^{-1}\numZ^{m\mu,n}/\numZ^{m\mu,n}}$.
Similarly,
\(\chi_{\overline{b_1}+\overline{b_2}}
=\chi_{\overline{b_1}}\chi_{\overline{b_2}}\) and
\(\chi_{\overline0}(\overline v)=1\); hence \eqref{eq:isoDual} is a
group homomorphism.  To prove injectivity, suppose that
$\chi_{\overline{b}}(\overline{v})=1$ for all
$v\in \widetilde{A}^{-1}\numZ^{m\mu,n}$.  Then
$\tr(b^TM_2^{(\mu)}x)\in\numZ$ for every $x\in\numZ^{m\mu,n}$.
Taking $x$ successively to be the standard basis matrices gives
$b^TM_2^{(\mu)}\in \numZ^{n,m\mu}$ and hence
$b\in {M_2^{(\mu)}}^{-1}\numZ^{m\mu,n}$, so
$\overline{b}=\overline{0}$.  Thus the map is injective.  Its domain
and codomain have the same finite cardinality
$\abs{\det(\widetilde{A})}^n$, so it is also surjective.
\end{proof}

We now prove the main theorem.
\begin{proof}[Proof of Theorem~\ref{thm:main}]
By Lemma~\ref{lemm:thetaRelationLemma2}, \eqref{eq:thetaRelation} holds
for $\sigma(v)=Av$.  Equation \eqref{eq:LandL1} and the definition of
$\widetilde{A}$ give
$L/L_1=\widetilde{A}^{-1}\numZ^{m\mu,n}/\numZ^{m\mu,n}$ and hence
$\abs{L/L_1}=\abs{\det(\widetilde{A})}^n$.  Therefore
\begin{equation}
\label{eq:proofMain1}
\text{LHS of \eqref{eq:thetaRelation}}=\abs{\det \widetilde{A}}^n\cdot\prod_{i=1}^{\mu}\Theta_{M_1,n}[a_i,b_i](\tau,z_i).
\end{equation}
Let $\chi_{\overline{b}}$ be the character in \eqref{eq:chib}.
Lemma~\ref{lemm:thetaRelationLemma3} allows us to replace the sum
$\sum_{\chi\in\widehat{L/L_1}}f(\chi)$ by
$\sum_{\overline{b}\in{M_2^{(\mu)}}^{-1}\widetilde{A}^{-T}\numZ^{m\mu,n}/{M_2^{(\mu)}}^{-1}\numZ^{m\mu,n}}f(\chi_{\overline{b}})$.
Furthermore,
\begin{align*}
A^{-1}{M_1^{(\mu)}}^{-1}\numZ^{m\mu,n}/A^{-1}{M_1^{(\mu)}}^{-1}\widetilde{A}^T\numZ^{m\mu,n}&\rightarrow{M_2^{(\mu)}}^{-1}\widetilde{A}^{-T}\numZ^{m\mu,n}/{M_2^{(\mu)}}^{-1}\numZ^{m\mu,n}\\
\beta+A^{-1}{M_1^{(\mu)}}^{-1}\widetilde{A}^T\numZ^{m\mu,n}&\mapsto{M_2^{(\mu)}}^{-1}\widetilde{A}^{-T}M_1^{(\mu)}A\beta+{M_2^{(\mu)}}^{-1}\numZ^{m\mu,n}
\end{align*}
is a group isomorphism, so the sum may be written as
\[
\sum\nolimits_{\beta\in A^{-1}{M_1^{(\mu)}}^{-1}\numZ^{m\mu,n}/A^{-1}{M_1^{(\mu)}}^{-1}\widetilde{A}^T\numZ^{m\mu,n}}f\left(\chi_{\overline{{M_2^{(\mu)}}^{-1}\widetilde{A}^{-T}M_1^{(\mu)}A\beta}}\right).
\]
Moreover,
$\sigma^{-1}(L)/L_1=(\widetilde{A}A)^{-1}\numZ^{m\mu,n}/\numZ^{m\mu,n}$.
Thus the right-hand side of \eqref{eq:thetaRelation} becomes
\begin{multline}
\label{eq:proofMain2}
\sum_{\alpha,\beta}\chi_{\overline{{M_2^{(\mu)}}^{-1}\widetilde{A}^{-T}M_1^{(\mu)}A\beta}}(\overline{A\alpha})\sum_{h\in\numZ^{m\mu,n}}\chi_{\overline{{M_2^{(\mu)}}^{-1}\widetilde{A}^{-T}M_1^{(\mu)}A\beta}}(\overline{Ah})\\
\cdot\prod_{i=1}^{\mu}\etp{\tr\left(\frac{1}{2}\tau v_i^TM_2v_i+v_i^{T}M_2(w_i+b_i')\right)},
\end{multline}
where $v_i:=h_i+\alpha_i+a_i'$, $[h_1,\cdots,h_\mu]=h$, and
$[\alpha_1,\cdots,\alpha_\mu]=\alpha$; here \(\alpha\) and \(\beta\)
run over complete systems of representatives for the quotient groups in
\eqref{eq:alphaset} and \eqref{eq:betaset}, respectively.
Equation \eqref{eq:chib} gives
\begin{equation*}
\chi_{\overline{{M_2^{(\mu)}}^{-1}\widetilde{A}^{-T}M_1^{(\mu)}A\beta}}(\overline{A\alpha})=\etp{\tr(\beta^TA^TM_1^{(\mu)}A\alpha)}=\etp{\tr(\beta^TM_2^{(\mu)}\alpha)}.
\end{equation*}
The analogous formula holds for
$\chi_{\overline{{M_2^{(\mu)}}^{-1}\widetilde{A}^{-T}M_1^{(\mu)}A\beta}}(\overline{Ah})$.
Substitution into \eqref{eq:proofMain2} shows that the right-hand side
of \eqref{eq:thetaRelation} equals
\begin{multline*}
\sum_{\alpha,\beta}\etp{-\tr(\beta^TM_2^{(\mu)}a')}\sum_{h\in\numZ^{m\mu,n}}\etp{\tr(\beta^TM_2^{(\mu)}(h+\alpha+a'))}\\
\cdot\prod_{i=1}^{\mu}\etp{\tr\left(\frac{1}{2}\tau v_i^TM_2v_i+v_i^{T}M_2(w_i+b_i')\right)},
\end{multline*}
where $a'=[a_1',\cdots,a_\mu']$.  By the definition of
$\Theta_{M_2,n}$, this expression simplifies to
\[
\sum_{\alpha,\beta}\etp{-\tr\left(\sum_{i=1}^{\mu}a_i'^TM_2\beta_i\right)}\cdot\prod_{i=1}^{\mu}\Theta_{M_2,n}[a_i'+\alpha_i,b_i'+\beta_i](\tau,w_i).
\]
Together with \eqref{eq:proofMain1} and \eqref{eq:thetaRelation}, this
proves the theorem.
\end{proof}

\begin{rema}
(1) For given $M_1,M_2$, and $A$, Lemma~\ref{lemm:findA1} provides
the matrix $\widetilde{A}$ needed to apply Theorem~\ref{thm:main}; this
step is independent of $n$.  Choosing the matrix supplied by that lemma,
with $m$ replaced by $m\mu$, considerably simplifies the summation
ranges for $\alpha$ and $\beta$.  By
Remark~\ref{rema:compute_AA1}(2), $\widetilde{A}$ may also be taken in
Hermite normal form.

(2) The quotient group \eqref{eq:alphaset} is well-defined because the
assumption
$\numZ^{m\mu,n}+A\numZ^{m\mu,n}=\widetilde{A}^{-1}\numZ^{m\mu,n}$
implies $A\numZ^{m\mu,n}\subseteq \widetilde{A}^{-1}\numZ^{m\mu,n}$,
or equivalently
$\numZ^{m\mu,n}\subseteq(\widetilde{A}A)^{-1}\numZ^{m\mu,n}$.

(3) The independence of the right-hand side of \eqref{eq:mainIdentity}
from the representatives of $\alpha$ and $\beta$ requires verification.
For each $\overline{\alpha}=\alpha+\numZ^{m\mu,n}\in(\widetilde{A}A)^{-1}\numZ^{m\mu,n}/\numZ^{m\mu,n}$ and each $\overline{\beta}=\beta+A^{-1}{M_1^{(\mu)}}^{-1}\widetilde{A}^T\numZ^{m\mu,n}\in A^{-1}{M_1^{(\mu)}}^{-1}\numZ^{m\mu,n}/A^{-1}{M_1^{(\mu)}}^{-1}\widetilde{A}^T\numZ^{m\mu,n}$, the term
\[
\etp{-\tr\left(\sum_{i=1}^{\mu}a_i'^TM_2\beta_i\right)}\cdot\prod_{i=1}^{\mu}\Theta_{M_2,n}[a_i'+\alpha_i,b_i'+\beta_i](\tau,w_i)
\]
is independent of these choices.  To prove this, let
$\alpha'\in(\widetilde{A}A)^{-1}\numZ^{m\mu,n}$ and
$\beta'\in A^{-1}{M_1^{(\mu)}}^{-1}\numZ^{m\mu,n}$ satisfy
$\overline{\alpha}=\overline{\alpha'}$ and
$\overline{\beta}=\overline{\beta'}$.  Define
$\Delta\alpha_i=\alpha'_i-\alpha_i$ and
$\Delta\beta_i=\beta'_i-\beta_i$.  Then
\[
\Delta\beta=\beta'-\beta\in A^{-1}{M_1^{(\mu)}}^{-1}\widetilde{A}^T\numZ^{m\mu,n}={M_2^{(\mu)}}^{-1}(\widetilde{A}A)^T\numZ^{m\mu,n}\subseteq {M_2^{(\mu)}}^{-1}\numZ^{m\mu,n},
\]
and hence $\Delta\beta_i\in M_2^{-1}\numZ^{m,n}$, while
$\Delta\alpha_i\in\numZ^{m,n}$.  Lemma~\ref{lemm:twochar} therefore
gives
\begin{align*}
&\etp{-\tr\left(\sum_{i=1}^{\mu}a_i'^TM_2\beta'_i\right)}\\
&\quad{}\cdot\prod_{i=1}^{\mu}
\Theta_{M_2,n}[a_i'+\alpha'_i,b_i'+\beta'_i](\tau,w_i)\\
={}&\etp{-\tr\left(\sum_{i=1}^{\mu}a_i'^TM_2\beta'_i\right)}
\etp{\tr\left(\sum_{i=1}^{\mu}(a_i'+\alpha_i)^TM_2\Delta\beta_i\right)}\\
&\quad{}\cdot\prod_{i=1}^{\mu}
\Theta_{M_2,n}[a_i'+\alpha_i,b_i'+\beta_i](\tau,w_i)\\
={}&\etp{\tr\left(\sum_{i=1}^{\mu}\alpha_i^TM_2\Delta\beta_i\right)}
\etp{-\tr\left(\sum_{i=1}^{\mu}a_i'^TM_2\beta_i\right)}\\
&\quad{}\cdot\prod_{i=1}^{\mu}
\Theta_{M_2,n}[a_i'+\alpha_i,b_i'+\beta_i](\tau,w_i).
\end{align*}
Finally, $\alpha^T\in\numZ^{n,m\mu}\cdot(\widetilde{A}A)^{-T}$ and
$M_2^{(\mu)}\Delta\beta\in (\widetilde{A}A)^T\numZ^{m\mu,n}$, so
\begin{align*}
\etp{\tr\left(\sum_{i=1}^{\mu}\alpha_i^TM_2\Delta\beta_i\right)}=\etp{\tr(\alpha^TM_2^{(\mu)}\Delta\beta)}=1.
\end{align*}
\end{rema}

\subsection{Riemann's bilinear and quadrilinear relations}
\label{sub:riemann_formulas}
Specializing Theorem~\ref{thm:main} to $m=1$ and $M_1=(1)$ recovers
two classical Riemann theta relations.  The Riemann theta function is
usually defined as $\Theta_{(1),n}[0,0](\tau,z)$ for
$(\tau,z)\in \mathfrak{H}_n \times \numC^{1,n}$.  Throughout this
subsection, fix a positive integer $n$ and write
$\Theta[a,b](\tau,z):=\Theta_{(1),n}[a,b](\tau,z)$.

\begin{thm}[Riemann's bilinear relation]
\label{thm:bilinear-relation}
Let $n\geq1$, $\tau\in\mathfrak H_n$, $z_1,z_2\in\numC^{1,n}$, and $a_1,b_1,a_2,b_2\in\numR^{1,n}$. Then we have
\begin{multline*}
2^n\Theta[a_1,b_1](\tau,z_1)\cdot\Theta[a_2,b_2](\tau,z_2)=\sum_{b \in \frac{1}{2}\numZ^{1,n}/\numZ^{1,n}}\etp{-2b\cdot a_1^T} \\
\cdot\Theta[a_1+a_2, (b_1+b_2)/2+b]\left(\frac{\tau}{2},\frac{z_1+z_2}{2}\right)\cdot\Theta[a_1-a_2, (b_1-b_2)/2+b]\left(\frac{\tau}{2},\frac{z_1-z_2}{2}\right).
\end{multline*}
\end{thm}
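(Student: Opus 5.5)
The plan is to specialize Theorem~\ref{thm:main} to $m=1$, $\mu=2$, $M_1=(1)$ and $M_2=(1/2)$. The first step is the observation that halving $M$ is the same as halving $\tau$. Directly from \eqref{eq:SiegelTheta},
\[
\Theta_{(1/2),n}[a',b'](\tau,w)=\sum_{g}\etp{\tr\left(\tfrac14\tau(g+a')^T(g+a')+\tfrac12(g+a')^T(w+b')\right)}=\Theta[a',b'/2]\left(\tfrac{\tau}{2},\tfrac{w}{2}\right).
\]
For the intertwining matrix I take
\[
A=\tfrac12\begin{pmatrix}1&1\\1&-1\end{pmatrix}\in\mathrm{GL}_2(\numQ).
\]
It satisfies $A^TA=\tfrac12 I_2$, which is exactly the hypothesis $A^TM_1^{(2)}A=M_2^{(2)}$. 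Moreover $A^{-T}=A^{-1}=\tbtmat{1}{1}{1}{-1}$, so the theorem's substitutions give
\[
a_1'=a_1+a_2,\quad a_2'=a_1-a_2,\quad b_{1,2}'=b_1\pm b_2,\quad w_{1,2}=z_1\pm z_2.
\]
Combined with the identity above, these produce exactly the arguments $\tau/2$, $(z_1\pm z_2)/2$ and $(b_1\pm b_2)/2$ that appear in the statement.

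Next I compute the lattice data column by column, since every lattice involved is $\numZ^{2,n}$ with columns treated independently.
\begin{itemize}
\item One has $\numZ^2+A\numZ^2=\numZ^2+\numZ(1/2,1/2)^T$. This lattice has index $2$ over $\numZ^2$, so $\abs{\det\widetilde A}^n=2^n$. Concretely one may take $\widetilde A=\tbtmat{1}{1}{0}{2}^{-1}$ up to a $\glnZ{2}$ factor, by Lemma~\ref{lemm:findA1} or by direct inspection.
\item For the $\alpha$-set \eqref{eq:alphaset}, I need $A^{-1}(\numZ^2+A\numZ^2)$. The matrix $A^{-1}$ sends $\numZ^2$ onto $\{(x,y):x\equiv y \bmod 2\}$ and sends $(1/2,1/2)^T$ to $(1,0)^T$. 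Hence $(\widetilde AA)^{-1}\numZ^{2,n}=\numZ^{2,n}$, and only $\alpha=0$ occurs.
\item For the $\beta$-set \eqref{eq:betaset}, $\widetilde A^T\numZ^2$ is the dual lattice of $\numZ^2+\numZ(1/2,1/2)^T$, namely $\{y\in\numZ^2: y_1+y_2\text{ even}\}$. Applying $A^{-1}$ to it gives $2\numZ^2$. Therefore the quotient is $\{(x,y)\in\numZ^2:x\equiv y\bmod 2\}/2\numZ^2$ in each column.
\end{itemize}
Representatives for the $\beta$-set are therefore $\beta=[\beta_0,\beta_0]$ with $\beta_0\in\{0,1\}^{1,n}$. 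Writing $b=\beta_0/2$, this $b$ runs over $\frac12\numZ^{1,n}/\numZ^{1,n}$.

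Finally I substitute everything into \eqref{eq:mainIdentity}. The phase factor there is
\[
\etp{-\tfrac12\left((a_1+a_2)\beta_0^T+(a_1-a_2)\beta_0^T\right)}=\etp{-a_1\beta_0^T}=\etp{-2b\cdot a_1^T}.
\]
The two theta factors become
\[
\Theta[a_1\pm a_2,(b_1\pm b_2)/2+b]\left(\tfrac{\tau}{2},\tfrac{z_1\pm z_2}{2}\right),
\]
which is the claimed formula. I expect no real obstacle, since the whole argument is bookkeeping. The one place needing care is the identification of the $\beta$-quotient, in particular checking that $\widetilde A^T\numZ^2$ is the dual of $\numZ^2+A\numZ^2$. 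If that becomes messy, I will instead fix an explicit $\widetilde A$ and verify the two quotient groups by hand, one column at a time.
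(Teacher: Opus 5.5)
Your proposal is correct and is essentially the paper's proof: the same specialization $m=1$, $\mu=2$, $M_1=(1)$, $M_2=(1/2)$, $A=\tfrac12\tbtmat{1}{1}{1}{-1}$, with a trivial $\alpha$-set, $\beta=[\beta_0,\beta_0]$ where $\beta_0=2b$, and then the rescaling $\Theta_{(1/2),n}[a',b'](\tau,w)=\Theta[a',b'/2](\tau/2,w/2)$. The paper simply takes $\widetilde A=A^{-1}=\tbtmat{1}{1}{1}{-1}$, which is integral, so $\widetilde AA=I$ and the $\beta$-quotient is visibly $A^{-1}\numZ^{2,n}/2\numZ^{2,n}$; your explicit choice contains a slip, since $\widetilde A$ must be integral and should therefore be $\tbtmat{1}{1}{0}{2}$ rather than its inverse, although your duality computation never actually uses that matrix.
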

\begin{proof}
In Theorem~\ref{thm:main}, take $m=1$, $\mu=2$, $M_1=(1)$,
$M_2=(1/2)$, and $A=\tbtmat{1/2}{1/2}{1/2}{-1/2}$.  Then
$A^T\cdot M_1^{(\mu)}\cdot A=M_2^{(\mu)}$, and
$\numZ^{m\mu,n}+A\numZ^{m\mu,n}=\widetilde{A}^{-1}\numZ^{m\mu,n}$
for $\widetilde{A}=A^{-1}$.  Hence \eqref{eq:mainIdentity} becomes
\begin{multline}
\label{eq:RiemannThetaBilinear}
2^n\Theta[a_1,b_1](\tau,z_1)\cdot\Theta[a_2,b_2](\tau,z_2)=\sum_{\beta \in A^{-1}\numZ^{2,n}/A^{-2}\numZ^{2,n}}\etp{-\frac{1}{2}\left(\beta_1\cdot a_1'^T+\beta_2\cdot a_2'^T\right)} \\
\cdot\Theta_{(1/2),n}[a_1', b_1'+\beta_1]\left(\tau,w_1\right)\cdot\Theta_{(1/2),n}[a_2', b_2'+\beta_2]\left(\tau,w_2\right),
\end{multline}
where
\begin{equation*}
\begin{pmatrix}a_1'\\a_2'\end{pmatrix}=A^{-1}\begin{pmatrix}a_1\\a_2\end{pmatrix}=\begin{pmatrix}a_1+a_2\\a_1-a_2\end{pmatrix},\quad
\begin{pmatrix}b_1'\\b_2'\end{pmatrix}=\begin{pmatrix}b_1+b_2\\b_1-b_2\end{pmatrix},\quad
\begin{pmatrix}w_1\\w_2\end{pmatrix}=\begin{pmatrix}z_1+z_2\\z_1-z_2\end{pmatrix}.
\end{equation*}
The map
\[
\frac{1}{2}\numZ^{1,n}/\numZ^{1,n}\rightarrow A^{-1}\numZ^{2,n}/A^{-2}\numZ^{2,n},\quad b+\numZ^{1,n}\mapsto\begin{pmatrix}2b\\2b\end{pmatrix}+A^{-2}\numZ^{2,n}
\]
is a group monomorphism.  Since
$\abs{\frac{1}{2}\numZ^{1,n}/\numZ^{1,n}}=2^n=\abs{A^{-1}\numZ^{2,n}/A^{-2}\numZ^{2,n}}$,
it is an isomorphism.  Apply the corresponding change of summation
variables to the right-hand side of \eqref{eq:RiemannThetaBilinear} and
use
\begin{align*}
\Theta_{(1/2),n}[a_1', b_1'+2b]\left(\tau,w_1\right)&=\Theta[a_1+a_2, (b_1+b_2)/2+b]\left(\frac{\tau}{2},\frac{z_1+z_2}{2}\right),\\
\Theta_{(1/2),n}[a_2', b_2'+2b]\left(\tau,w_2\right)&=\Theta[a_1-a_2, (b_1-b_2)/2+b]\left(\frac{\tau}{2},\frac{z_1-z_2}{2}\right),
\end{align*}
to obtain the stated formula.
\end{proof}

\begin{thm}[Riemann's quadrilinear relation]
\label{thm:quadrilinear}
Let $n\geq1$, $\tau\in\mathfrak H_n$, $z_i\in\numC^{1,n}$, and $a_i,b_i\in\numR^{1,n}$ for $i=1,2,3,4$. Then
\begin{equation}
\label{eq:Riemann-theta-relation}
2^n\prod_{i=1}^4\Theta[a_i,b_i](\tau,z_i)=\sum_{a,b\in\frac{1}{2}\numZ^{1,n}/\numZ^{1,n}}\etp{-2b\cdot a_1^T}\prod_{i=1}^4\Theta[a_i'+a,b_i'+b](\tau,w_i),
\end{equation}
where
\begin{align*}
(a_1'^T,a_2'^T,a_3'^T,a_4'^T)&=(a_1^T,a_2^T,a_3^T,a_4^T)U,\\
(b_1'^T,b_2'^T,b_3'^T,b_4'^T)&=(b_1^T,b_2^T,b_3^T,b_4^T)U,\\
(w_1^T,w_2^T,w_3^T,w_4^T)&=(z_1^T,z_2^T,z_3^T,z_4^T)U,
\end{align*}
and
\[
U=\frac{1}{2}\begin{pmatrix}
1 & 1 & 1 & 1\\
1 & 1 & -1 & -1\\
1 & -1 & 1 & -1\\
1 & -1 & -1 & 1
\end{pmatrix}.
\]
\end{thm}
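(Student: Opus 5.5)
The plan is to specialize Theorem~\ref{thm:main} to $m=1$, $\mu=4$, $M_1=M_2=(1)$, and then identify the two summation ranges \eqref{eq:alphaset} and \eqref{eq:betaset} with $\frac12\numZ^{1,n}/\numZ^{1,n}$. The argument parallels the proof of Theorem~\ref{thm:bilinear-relation}.

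\textbf{Choice of $A$.} The matrix $U$ is symmetric, and $U^2=I$. Take $A:=U$. Then $A^TM_1^{(4)}A=U^2=I=M_2^{(4)}$ and $A^{-T}=U$. Hence the primed quantities of Theorem~\ref{thm:main} are exactly the $a_i',b_i',w_i$ in the statement.

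\textbf{Computing $\widetilde A$ and $|\det\widetilde A|$.} The columns of $U$ are $h:=\frac12(1,1,1,1)^T$ and three further vectors, each of which differs from $h$ by an integral vector. For example, $h-\frac12(1,1,-1,-1)^T=(0,0,1,1)^T$. Therefore
\[
\numZ^{4,n}+U\numZ^{4,n}=\numZ^{4,n}+h\,\numZ^{1,n}.
\]
This is a lattice containing $\numZ^{4,n}$ with index $2^n$. We may take $\widetilde A$ to be the inverse of the matrix with columns $e_1,e_2,e_3,h$. Then $\widetilde A$ is integral and $|\det\widetilde A|=2$, so the left side of \eqref{eq:mainIdentity} carries the factor $2^n$.

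\textbf{The $\alpha$-range.} We have $(\widetilde AA)^{-1}\numZ^{4,n}=U(\numZ^{4,n}+U\numZ^{4,n})=U\numZ^{4,n}+\numZ^{4,n}$, which is the same lattice as above. So a system of representatives is $\alpha=h\cdot 2a$ with $a\in\frac12\numZ^{1,n}/\numZ^{1,n}$. That is, $\alpha_i=a$ for every $i$.

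\textbf{The $\beta$-range.} The range is $U\numZ^{4,n}/U\widetilde A^T\numZ^{4,n}$. Here $\widetilde A^T\numZ^{4,n}$ is the dual of $\widetilde A^{-1}\numZ^{4,n}$. This dual consists of the integral $x$ with $h^Tx$ integral, i.e. those whose columns have even coordinate sum. I claim the elements $\beta=U(e_1c)=h\,c$ with $c\in\numZ^{1,n}/2\numZ^{1,n}$ form a full system of representatives. They are pairwise inequivalent, because $e_1(c-c')$ lies in the even-sum lattice only if $c\equiv c'\pmod 2$. There are $2^n$ of them, which matches the index $|\det\widetilde A|^n$. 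Writing $c=2b$ with $b\in\frac12\numZ^{1,n}/\numZ^{1,n}$ gives $\beta_i=b$ for all $i$.

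\textbf{The phase.} The phase in \eqref{eq:mainIdentity} becomes $\etp{-\sum_i a_i'b^T}$. Since $U(1,1,1,1)^T=(2,0,0,0)^T$, we get
\[
\sum_i a_i'=(a_1,\dots,a_4)\,U\,(1,1,1,1)^T=2a_1,
\]
so the phase is $\etp{-2b\cdot a_1^T}$. Substituting everything into \eqref{eq:mainIdentity} yields \eqref{eq:Riemann-theta-relation}.

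The only step needing care is the $\beta$-range: one must verify that the chosen representatives really exhaust $U\numZ^{4,n}/U\widetilde A^T\numZ^{4,n}$. The cardinality comparison with $|\det\widetilde A|^n$ settles this, exactly as in the proof of Theorem~\ref{thm:bilinear-relation}.
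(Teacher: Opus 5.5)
Your proposal is correct and follows the paper's own proof. Both specialize Theorem~\ref{thm:main} with $m=1$, $\mu=4$, $M_1=M_2=(1)$ and $A=U=U^{-1}$, parametrize both quotients by the diagonal representatives $[a,a,a,a]$ and $[b,b,b,b]$, and obtain the phase from $\sum_i a_i'=2a_1$. Your $\widetilde A$ (last column $(-1,-1,-1,2)^T$) differs from the paper's only by a unimodular left factor, which is harmless by Remark~\ref{rema:compute_AA1}(2), and you supply the coset-counting details that the paper leaves as a sketch.
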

\begin{proof}
The proof is similar to that of the preceding theorem, so we give only
a sketch.  In Theorem~\ref{thm:main}, take $m=1$, $\mu=4$,
$M_1=M_2=(1)$, and $A=U^{-1}=U$.  Lemma~\ref{lemm:findA1} and
Remark~\ref{rema:compute_AA1} give
\begin{equation*}
\widetilde{A}=\begin{pmatrix}
 1 & 0 & 0 & 1\\
 0 & 1 & 0 & 1\\
 0 & 0 & 1 & 1\\
 0 & 0 & 0 & 2\\
\end{pmatrix}.
\end{equation*}
All hypotheses of Theorem~\ref{thm:main} are therefore satisfied.  The
representative sets needed to reduce \eqref{eq:mainIdentity} to
\eqref{eq:Riemann-theta-relation} are given by the isomorphisms
\begin{align*}
\frac{1}{2}\numZ^{1,n}/\numZ^{1,n}&\rightarrow(\widetilde{A}A)^{-1}\numZ^{4,n}/\numZ^{4,n}\\
a+\numZ^{1,n}&\mapsto[a,a,a,a]+\numZ^{4,n}
\end{align*}
and
\begin{align*}
\frac{1}{2}\numZ^{1,n}/\numZ^{1,n}&\rightarrow A^{-1}\numZ^{4,n}/A^{-1}\widetilde{A}^T\numZ^{4,n}\\
b+\numZ^{1,n}&\mapsto[b,b,b,b]+A^{-1}\widetilde{A}^T\numZ^{4,n}.
\end{align*}
For these representatives, \(\sum_{i=1}^4a_i'=2a_1\), so the
exterior phase in \eqref{eq:mainIdentity} is
\(\etp{-2b\cdot a_1^T}\).  Substituting the two representative sets
into \eqref{eq:mainIdentity} gives
\eqref{eq:Riemann-theta-relation}.
\end{proof}

\section{Cubic theta identities}
\label{r: cubic theta section}

\subsection{The cubic Fourier identity}

We begin with a direct specialization of Theorem~\ref{thm:main}.
Throughout this section, we use
\begin{equation}
\label{r: cubic basic data}
 M=\tbtMat{2}{-1}{-1}{2},\qquad
 \rho=\frac13\begin{pmatrix}1\\-1\end{pmatrix},\qquad
 \sigma=\frac13\begin{pmatrix}1\\1\end{pmatrix},\qquad
 \omega=\etp{1/3}.
\end{equation}
Recall that \(\F_3=\numZ/3\numZ\).  The choice of integral
representatives is immaterial and hence left implicit.
For row vectors
\(\mathbf p,\mathbf q\in\F_3^n\), we write
\begin{equation}
\label{r: indexed cubic theta}
 \Theta^{(n)}_{\mathbf p,\mathbf q}(\tau,z)
 :=\Theta_{M,n}[\rho\mathbf p,\sigma\mathbf q](\tau,z),
 \qquad
 \theta^{(n)}_{\mathbf p,\mathbf q}(\tau)
 :=\Theta^{(n)}_{\mathbf p,\mathbf q}(\tau,0).
\end{equation}
Lemma~\ref{lemm:twochar} shows that
\(\Theta^{(n)}_{\mathbf p,\mathbf q}\) is independent of the
integral representatives of \(\mathbf p\) and \(\mathbf q\).
Scalar products of row vectors are taken modulo \(3\); thus
\(\mathbf p\cdot\mathbf v=\mathbf p\mathbf v^T\).
\begin{thm}
\label{r: cubic Fourier theorem}
Let \(n\geq1\), \(\tau\in\mathfrak H_n\),
\(z\in\numC^{2,n}\), and \(C=\tbtmat{1}{1}{-1}{2}\).
For \(\mathbf p,\mathbf q\in\F_3^n\), we have
\begin{equation}
\label{r: cubic Fourier formula}
 3^n\Theta^{(n)}_{\mathbf p,\mathbf q}(\tau,z)^3
 =\sum_{\mathbf u,\mathbf v\in\F_3^n}
  \omega^{\mathbf p\cdot\mathbf v+\mathbf q\cdot\mathbf u}
  \Theta^{(n)}_{\mathbf u,\mathbf v}(\tau,Cz)
  \theta^{(n)}_{\mathbf u,\mathbf v}(\tau)^2.
\end{equation}
In particular,
\begin{equation}
\label{r: cubic constant Fourier formula}
 3^n\theta^{(n)}_{\mathbf p,\mathbf q}(\tau)^3
 =\sum_{\mathbf u,\mathbf v\in\F_3^n}
  \omega^{\mathbf p\cdot\mathbf v+\mathbf q\cdot\mathbf u}
  \theta^{(n)}_{\mathbf u,\mathbf v}(\tau)^3.
\end{equation}
\end{thm}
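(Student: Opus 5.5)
The plan is to specialize Theorem~\ref{thm:main} to \(m=2\), \(\mu=3\), \(M_1=M_2=M\), with \(a_i=\rho\mathbf p\), \(b_i=\sigma\mathbf q\) and \(z_i=z\) for \(i=1,2,3\). The content of the proof lies in the choice of \(A\in\mathrm{GL}_6(\numQ)\) with \(A^TM^{(3)}A=M^{(3)}\).

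\emph{Choice of \(A\).} A direct computation gives \(MC=\tbtmat{3}{0}{-3}{3}\), hence \(C^TMC=3M\). So \(C\) is a similitude of \(A_2\) with multiplier \(3\), and \(3\) is the number of factors. This suggests defining \(A^{-1}\) in block form, with first block row \((C,C,C)\) up to a transpose-adjusted scalar. This choice forces \(A^{-1}[z,z,z]=[Cz,0,0]\), so \(w_1=Cz\) and \(w_2=w_3=0\). For the remaining two block rows I would take combinations of the shape \((D,-D,0)\) and \((E,E,-2E)\), or an \(\omega\)-twisted Fourier pattern. The blocks \(D,E\) are chosen so that the blocks are mutually orthogonal for \(B_2\) and each has the correct norm. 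Since the first row consists of three equal blocks, I expect \(D\) and \(E\) can be built from \(C\) and the rotation of order \(6\) preserving \(M\). If that fails, I would solve the quadratic conditions for \(D,E\) by a small search over matrices with entries in \(\frac13\numZ\).

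\emph{Index and characteristics.} Next I compute \(\widetilde A\) via Lemma~\ref{lemm:findA1}. I expect \(\numZ^{6,n}+A\numZ^{6,n}\) to have index \(3^n\) over \(\numZ^{6,n}\), i.e.\ \(\abs{\det\widetilde A}=3\), which matches the factor \(3^n\) on the left. For the characteristics, \(a'=A^{-1}[\rho\mathbf p,\rho\mathbf p,\rho\mathbf p]=[C\rho\mathbf p,0,0]\). Since \(C\rho=(0,-1)^T\) is integral, the \(a\)-characteristic coming from \(\mathbf p\) is integral and can be removed by Lemma~\ref{lemm:twochar}, at the cost of a phase. The \(b\)-characteristic is \(b'=[C\sigma\mathbf q,0,0]\), and \(C\sigma=(2/3,1/3)^T\) is congruent to a multiple of \(\sigma\) modulo \(M^{-1}\numZ^2\). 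So \(\mathbf q\) also survives only as a phase.

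\emph{Representatives and collapse.} I then choose representatives for \eqref{eq:alphaset} and \eqref{eq:betaset} of the form \(\alpha=[\rho\mathbf u,\rho\mathbf u,\rho\mathbf u]\) and \(\beta=[\sigma\mathbf v,\sigma\mathbf v,\sigma\mathbf v]\) (possibly up to signs or permutations dictated by \(A\)), for \(\mathbf u,\mathbf v\in\F_3^n\). I verify they are complete by checking injectivity and comparing cardinalities, \(9^n\) in total, as in the proof of Theorem~\ref{thm:quadrilinear}. Each summand then reads \(\Theta_{\mathbf u,\mathbf v}(\tau,Cz)\,\theta_{\mathbf u,\mathbf v}(\tau)^2\) times a root of unity. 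The exterior phase \(\etp{-\tr(a'^TM^{(3)}\beta)}\) together with the phases from Lemma~\ref{lemm:twochar} should combine to \(\omega^{\mathbf p\cdot\mathbf v+\mathbf q\cdot\mathbf u}\). This uses \(\rho^TM\sigma=0\) and \(\rho^TM\rho=\tfrac23\), so that the cross terms become exactly \(\tfrac13(\mathbf p\cdot\mathbf v+\mathbf q\cdot\mathbf u)\) modulo \(\numZ\). Finally, setting \(z=0\) gives \eqref{r: cubic constant Fourier formula}, since \(C\cdot0=0\).

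The main obstacle is the first step: finding an explicit rational \(A\) that is an isometry of \(M^{(3)}\) and whose lattice quotients \eqref{eq:alphaset} and \eqref{eq:betaset} are exactly the diagonal copies of \(\F_3^n\). The phase bookkeeping in the last step is routine but error-prone. If the signs do not match, I would reparametrize \(\mathbf u\mapsto-\mathbf u\) or \(\mathbf v\mapsto-\mathbf v\), which is harmless since \(\Theta_{-\mathbf u,-\mathbf v}\) relates to \(\Theta_{\mathbf u,\mathbf v}\) through \(g\mapsto -g\).
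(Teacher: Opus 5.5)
Your outline matches the paper's proof. You use the same specialization of Theorem~\ref{thm:main} (\(m=2\), \(\mu=3\), \(M_1=M_2=M\), \(a_i=\rho\mathbf p\), \(b_i=\sigma\mathbf q\), \(z_i=z\)), the same transformed data \(a'=[C\rho\mathbf p,0,0]\), \(b'=[C\sigma\mathbf q,0,0]\), \(w=[Cz,0,0]\), and the same diagonal representatives \([\rho,\rho,\rho]\mathbf u\) and \([\sigma,\sigma,\sigma]\mathbf v\).

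As written, though, the argument has a gap exactly at the step you flag. No \(A\) is produced, and the first shape you list cannot exist. A block row \((D,-D,0)\) of \(A^{-1}\) has the correct norm only if \(2DM^{-1}D^T=M^{-1}\) (or, read columnwise, \(2D^TMD=M\)). That would give a rational equivalence between the \(A_2\) form \(\langle2,6\rangle\) and twice itself, \(\langle1,3\rangle\), which would make \(2\) a norm from \(\numQ(\sqrt{-3})\). It is not a norm, since \(2\) is inert there. The row \((E,E,-2E)\) needs multiplier \(6\) and fails for the same reason. So a search over \(\frac13\numZ\)-matrices of that shape finds nothing, however \(D,E\) are built from \(C\) and rotations.

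Your alternative, the \(\omega\)-twisted Fourier pattern, is the one that works, and it gives precisely the paper's matrix: \(A=(C^{-1}R^{jk})_{0\le j,k\le 2}\), where \(R=\tbtmat{0}{-1}{1}{-1}\) is the rotation of order \(3\). The identities \(C^TMC=3M\), \(R^TMR=M\) and \(I+R+R^2=0\) give \(A^TM^{(3)}A=M^{(3)}\) and \(A^{-1}=\frac13(R^{-jk}C)_{j,k}\). Hence \(A^{-1}[V,V,V]=[CV,0,0]\) and \(A[V,V,V]=[3C^{-1}V,0,0]\).

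With this \(A\), the verifications you defer are short.
\begin{itemize}
\item Every column of \(3A\) is congruent to \(u_0=(2,1,2,1,2,1)^T\) modulo \(3\). So \(\numZ^{6,n}+A\numZ^{6,n}=\numZ^{6,n}+\frac13u_0\numZ^{1,n}\) has index \(3^n\) over \(\numZ^{6,n}\), which yields \(\widetilde A\) with \(\det\widetilde A=3\).
\item \(A[\rho,\rho,\rho]=e_1\) and \(M^{(3)}A[\sigma,\sigma,\sigma]=e_2\). These show that the diagonal vectors give nonzero classes in the two one-column quotients of order \(3\); the second class is nonzero because \(\widetilde A^{-T}e_2\notin\numZ^{6,1}\).
\end{itemize}

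One correction to your phase bookkeeping. The integral shift \(C\rho\mathbf p\) of the first characteristic costs no phase by itself. The pairings that actually occur are:
\begin{itemize}
\item \((C\rho)^TM\sigma=-\frac13\), which produces the exterior factor \(\omega^{\mathbf p\cdot\mathbf v}\);
\item \(\rho^TMC\sigma=\rho^Te_1=\frac13\), which produces \(\omega^{\mathbf q\cdot\mathbf u}\) through Lemma~\ref{lemm:twochar} when \(C\sigma\mathbf q\in M^{-1}\numZ^{2,n}\) is removed.
\end{itemize}
The quantities \(\rho^TM\sigma\) and \(\rho^TM\rho\) are not what is needed.
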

\begin{proof}
We apply Theorem~\ref{thm:main} with
\(m=2\), \(\mu=3\), and \(M_1=M_2=M\), where
\begin{equation*}
 A=\frac13
 \begin{pmatrix}
  2&-1& 2&-1& 2&-1\\
  1& 1& 1& 1& 1& 1\\
  2&-1&-1&-1&-1& 2\\
  1& 1& 1&-2&-2& 1\\
  2&-1&-1& 2&-1&-1\\
  1& 1&-2& 1& 1&-2
 \end{pmatrix},
\end{equation*}
and
\begin{equation*}
 \widetilde{A}=
 \begin{pmatrix}
 1&0&0&0&0&-2\\
 0&1&0&0&0&-1\\
 0&0&1&0&0&-2\\
 0&0&0&1&0&-1\\
 0&0&0&0&1&-2\\
 0&0&0&0&0& 3
 \end{pmatrix}.
\end{equation*}
Direct multiplication gives
\begin{equation}
\label{r: cubic matrix identities}
 A^{T}M^{(3)}A=M^{(3)},
 \qquad \det A=1,
 \qquad \det \widetilde{A}=3.
\end{equation}
To verify the lattice condition, let
\[
 u_0=(2,1,2,1,2,1)^{T}\in\numZ^{6,1}.
\]
Every column of \(3A\) is congruent to \(u_0\) modulo
\(3\numZ^{6,1}\), and the first column is \(u_0\).  Hence
\begin{equation*}
 \numZ^{6,n}+A\numZ^{6,n}
 =\numZ^{6,n}+\frac13u_0\numZ^{1,n}.
\end{equation*}
The first five columns of \(\widetilde{A}^{-1}\) are \(e_1,\ldots,e_5\),
and the last is \(u_0/3\).  Since
\(e_6=u_0-2e_1-e_2-2e_3-e_4-2e_5\), these columns generate
\(\numZ^{6,1}+\frac13\numZ u_0\).  Consequently,
\begin{equation}
\label{r: cubic lattice condition}
 \numZ^{6,n}+A\numZ^{6,n}=\widetilde{A}^{-1}\numZ^{6,n}.
\end{equation}
Thus all the hypotheses of Theorem~\ref{thm:main} are satisfied.

We specialize the characteristics and Jacobi variables as follows:
\[
 z_1=z_2=z_3=z,\qquad
 a_1=a_2=a_3=\rho\mathbf p,\qquad
 b_1=b_2=b_3=\sigma\mathbf q.
\]
The left-hand side of \eqref{eq:mainIdentity} is
\begin{equation}
\label{r: cubic theorem left side}
 \abs{\det \widetilde{A}}^n\prod_{i=1}^{3}
 \Theta_{M,n}[a_i,b_i](\tau,z_i)
 =3^n\Theta^{(n)}_{\mathbf p,\mathbf q}(\tau,z)^3.
\end{equation}

It remains to identify the representatives in
\eqref{eq:alphaset} and \eqref{eq:betaset}.  For this purpose, we write
\[
 \widehat\rho=[\rho,\rho,\rho],\qquad
 \widehat\sigma=[\sigma,\sigma,\sigma].
\]
The notation is that of \eqref{eq:bracket-notation}.
By \eqref{r: cubic lattice condition}, \(\widetilde{A}A\) is integral, while
\eqref{r: cubic matrix identities} gives \(\abs{\det(\widetilde{A}A)}=3\).
Moreover, \(A\widehat\rho=e_1\).
Since \(\widetilde{A}A\widehat\rho=e_1\) and
\(\widehat\rho\notin\numZ^{6,1}\), the class of
\(\widehat\rho\) is nonzero in the one-column quotient
\((\widetilde{A}A)^{-1}\numZ^{6,1}/\numZ^{6,1}\), which has order
\(3\).
Thus
\[
 \{\widehat\rho\mathbf u:\mathbf u\in\F_3^n\}
\]
is a complete set of representatives for
\((\widetilde{A}A)^{-1}\numZ^{6,n}/\numZ^{6,n}\).
For the second quotient, multiplication by \(M^{(3)}A\) induces
the isomorphism
\begin{equation}
\label{r: cubic beta quotient}
 \frac{A^{-1}{M^{(3)}}^{-1}\numZ^{6,1}}
      {A^{-1}{M^{(3)}}^{-1}\widetilde{A}^{T}\numZ^{6,1}}
 \rightarrow
 \frac{\numZ^{6,1}}{\widetilde{A}^{T}\numZ^{6,1}}.
\end{equation}
A direct calculation gives
\begin{equation*}
 M^{(3)}A\widehat\sigma=e_2,
 \qquad
 \widetilde{A}^{-T}e_2
 =\begin{pmatrix}0&1&0&0&0&1/3\end{pmatrix}^{T}
 \notin\numZ^{6,1}.
\end{equation*}
The quotient on the right of \eqref{r: cubic beta quotient} has order
\(3\).  Hence the class of \(e_2\) generates it, and
\[
 \{\widehat\sigma\mathbf v:\mathbf v\in\F_3^n\}
\]
is a complete set of representatives for
\[
 \frac{A^{-1}{M^{(3)}}^{-1}\numZ^{6,n}}
 {A^{-1}{M^{(3)}}^{-1}\widetilde{A}^T\numZ^{6,n}}.
\]
Since both quotients are formed columnwise, the preceding one-column
computations establish the asserted systems of representatives for
every \(n\).

For every \(V\in\numC^{2,n}\), direct multiplication gives
\begin{equation*}
 A^{-1}\begin{pmatrix}V\\V\\V\end{pmatrix}
 =\begin{pmatrix}CV\\0\\0\end{pmatrix},\qquad
 C\rho=\begin{pmatrix}0\\-1\end{pmatrix},\qquad
 C\sigma=\begin{pmatrix}2/3\\1/3\end{pmatrix},\qquad
 MC\sigma=\begin{pmatrix}1\\0\end{pmatrix}.
\end{equation*}
Thus the transformed parameters in Theorem~\ref{thm:main} are
\[
 (a'_1,a'_2,a'_3)=(C\rho\mathbf p,0,0),\quad
 (b'_1,b'_2,b'_3)=(C\sigma\mathbf q,0,0),\quad
 (w_1,w_2,w_3)=(Cz,0,0).
\]
For these representatives, the exterior factor in
\eqref{eq:mainIdentity} is
\[
 \etp{-\tr\left((C\rho\mathbf p)^TM\sigma\mathbf v\right)}
 =\omega^{\mathbf p\cdot\mathbf v}.
\]
By Lemma~\ref{lemm:twochar},
\begin{align}
\Theta_{M,n}[C\rho\mathbf p+\rho\mathbf u,C\sigma\mathbf q+\sigma\mathbf v](\tau,Cz)&=\etp{\tr((\rho\mathbf u)^TMC\sigma\mathbf q)}\Theta_{M,n}[\rho\mathbf u,\sigma\mathbf v](\tau,Cz)\notag\\
&=\omega^{\mathbf q\cdot\mathbf u}\Theta^{(n)}_{\mathbf u,\mathbf v}(\tau,Cz).\label{r: cubic characteristic transformation}
\end{align}
Therefore,
\begin{equation*}
\text{RHS of }\eqref{eq:mainIdentity}=\sum_{\mathbf u,\mathbf v\in\F_3^n}\omega^{\mathbf p\cdot\mathbf v+\mathbf q\cdot\mathbf u}\Theta^{(n)}_{\mathbf u,\mathbf v}(\tau,Cz)\theta^{(n)}_{\mathbf u,\mathbf v}(\tau)^2.
\end{equation*}
Combining this identity with \eqref{r: cubic theorem left side} proves
\eqref{r: cubic Fourier formula}.  The specialization \(z=0\) gives
\eqref{r: cubic constant Fourier formula}.
\end{proof}

The following lemma supplies the two reductions used below.  Its
vanishing assertion turns the Fourier transform of
\eqref{r: cubic Fourier formula} into a family of relations, while its sign
symmetries reduce the number of theta series occurring in degree two.

\begin{lemm}
\label{lemm:cubic-vanishing-symmetry}
Let \(n\geq1\).  For
\(\mathbf p,\mathbf q\in\F_3^n\), we have
\begin{equation}
\label{r: cubic vanishing}
 \mathbf p\cdot\mathbf q\ne0
 \quad\Longrightarrow\quad
 \theta^{(n)}_{\mathbf p,\mathbf q}=0,
\end{equation}
and
\begin{equation}
\label{r: cubic sign symmetry}
 \theta^{(n)}_{\mathbf p,\mathbf q}
 =\theta^{(n)}_{-\mathbf p,\mathbf q}
 =\theta^{(n)}_{\mathbf p,-\mathbf q}
 =\theta^{(n)}_{-\mathbf p,-\mathbf q}.
\end{equation}
\end{lemm}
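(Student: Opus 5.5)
The plan is to derive both assertions from the invariance of the theta constant under integral isometries of the $A_2$ lattice, combined with Lemma~\ref{lemm:twochar}. The basic observation is this. Let $R\in\glnZ{2}$ satisfy $R^TMR=M$. Substituting $g\mapsto Rg$ (with $g\in\numZ^{2,n}$) in the defining series of $\theta_{M,n}[a,b](\tau)$ gives
\[
\theta_{M,n}[a,b](\tau)=\theta_{M,n}[R^{-1}a,R^{-1}b](\tau),
\]
because $(Rg+a)^TM(Rg+a)=(g+R^{-1}a)^TM(g+R^{-1}a)$, and the linear term transforms the same way. The substitution is legitimate since the series converges absolutely. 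Since $R$ is an arbitrary isometry, I may equally write the identity as $\theta[a,b]=\theta[Ra,Rb]$. The case $R=-I$ is also allowed, and then $z=0$ is used essentially, since $Cz$-type twists disappear.

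For the sign symmetries \eqref{r: cubic sign symmetry}, I first take $R=-I$. This gives $\theta[\rho\mathbf p,\sigma\mathbf q]=\theta[-\rho\mathbf p,-\sigma\mathbf q]$, that is, $\theta_{\mathbf p,\mathbf q}=\theta_{-\mathbf p,-\mathbf q}$. Next I take the swap $P=\tbtmat0110$. One checks $P^TMP=M$, $P\rho=-\rho$ and $P\sigma=\sigma$, which gives $\theta_{\mathbf p,\mathbf q}=\theta_{-\mathbf p,\mathbf q}$. Combining the two identities yields all four equalities.

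A point to settle before this is that the notation $\theta_{\mathbf p,\mathbf q}$ is independent of representatives with no stray phase. Changing the representative of $\mathbf p$ shifts $a$ by the integral vector $3\rho=(1,-1)^T$ times an integer row. Changing the representative of $\mathbf q$ shifts $b$ by $(1,1)^T$ times an integer row, and $(1,1)^T\in M^{-1}\numZ^{2,1}$. The corresponding phase in Lemma~\ref{lemm:twochar} involves $\rho^TM\sigma=0$, so it is trivial.

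For the vanishing statement \eqref{r: cubic vanishing}, I would use the order-three rotation $R=\tbtmat{0}{-1}{1}{-1}$. First I check that $R^TMR=M$. Next I compute
\[
R\rho=\rho+\begin{pmatrix}0\\1\end{pmatrix},\qquad R\sigma=\sigma+c,\quad c=-\tfrac13\begin{pmatrix}2\\1\end{pmatrix}=-M^{-1}e_1\in M^{-1}\numZ^{2,1}.
\]
Then $\theta[\rho\mathbf p,\sigma\mathbf q]=\theta[R\rho\mathbf p,R\sigma\mathbf q]$. Here $R\rho\mathbf p-\rho\mathbf p$ is integral and $R\sigma\mathbf q-\sigma\mathbf q=c\mathbf q\in M^{-1}\numZ^{2,n}$. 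By Lemma~\ref{lemm:twochar}, the right-hand side equals
\[
\etp{\tr((\rho\mathbf p)^TMc\mathbf q)}\,\theta[\rho\mathbf p,\sigma\mathbf q].
\]
Since $\rho^TMc=-\rho^Te_1=-1/3$, the phase is $\omega^{-\mathbf p\cdot\mathbf q}$. Hence $(1-\omega^{-\mathbf p\cdot\mathbf q})\theta_{\mathbf p,\mathbf q}=0$, and so $\theta_{\mathbf p,\mathbf q}=0$ whenever $\mathbf p\cdot\mathbf q\neq0$.

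The only real obstacle is bookkeeping: finding an isometry that fixes $\rho$ modulo $\numZ^{2,1}$ while moving $\sigma$ by a nontrivial element of $M^{-1}\numZ^{2,1}$, and getting the sign and normalization of the resulting phase right. That phase is $\tr(\mathbf p^T(\rho^TMc)\mathbf q)=-\tfrac13\,\mathbf p\mathbf q^T$. The rotation above does this, and the remaining steps are direct matrix checks.
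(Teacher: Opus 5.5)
Your proposal is correct and follows the paper's proof. Both arguments use the order-three isometry $R=\tbtmat{0}{-1}{1}{-1}$ together with Lemma~\ref{lemm:twochar} to get $\theta^{(n)}_{\mathbf p,\mathbf q}=\omega^{-\mathbf p\cdot\mathbf q}\theta^{(n)}_{\mathbf p,\mathbf q}$, and the swap $J=\tbtmat{0}{1}{1}{0}$ for the sign symmetries; the only cosmetic difference is that you pair $J$ with $-I$ where the paper uses $-J$, which gives the same four equalities.
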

\begin{proof}
The matrices
\[
 R=\begin{pmatrix}0&-1\\1&-1\end{pmatrix},\qquad
 J=\begin{pmatrix}0&1\\1&0\end{pmatrix}
\]
satisfy
\[
 \begin{gathered}
 R^{T}MR=M,\qquad J^{T}MJ=M,\\
 R\rho-\rho=\begin{pmatrix}0\\1\end{pmatrix},\qquad
 M(R\sigma-\sigma)=\begin{pmatrix}-1\\0\end{pmatrix},\\
 \rho^TM(R\sigma-\sigma)=-\frac13.
 \end{gathered}
\]
The change of variables induced by \(R\), followed by
Lemma~\ref{lemm:twochar}, gives
\[
 \theta^{(n)}_{\mathbf p,\mathbf q}
 =\theta_{M,n}[R\rho\mathbf p,R\sigma\mathbf q]
 =\omega^{-\mathbf p\cdot\mathbf q}
  \theta^{(n)}_{\mathbf p,\mathbf q},
\]
which proves \eqref{r: cubic vanishing}.  The changes induced by
\(J\) and \(-J\), together with
\[
 J\rho=-\rho,\qquad J\sigma=\sigma,\qquad
 (-J)\rho=\rho,\qquad(-J)\sigma=-\sigma,
\]
give \eqref{r: cubic sign symmetry}.
\end{proof}

\begin{coro}
\label{r: cubic character identity}
Let \(n\geq1\), \(\tau\in\mathfrak H_n\), and
\(z\in\numC^{2,n}\).  If
\(\boldsymbol\lambda,\boldsymbol\kappa\in\F_3^n\) and
\(\boldsymbol\lambda\cdot\boldsymbol\kappa\ne0\), then
\begin{equation}
\label{r: cubic character formula}
 \sum_{\boldsymbol a,\boldsymbol b\in\F_3^n}
 \omega^{\boldsymbol\lambda\cdot\boldsymbol a-
          \boldsymbol\kappa\cdot\boldsymbol b}
 \Theta^{(n)}_{\boldsymbol a,\boldsymbol b}(\tau,z)^3=0.
\end{equation}
\end{coro}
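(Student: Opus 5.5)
The plan is to take the Fourier identity \eqref{r: cubic Fourier formula} of Theorem~\ref{r: cubic Fourier theorem}, apply the inverse finite Fourier transform on \(\F_3^n\times\F_3^n\), and observe that the surviving term contains a theta constant killed by the vanishing assertion \eqref{r: cubic vanishing} of Lemma~\ref{lemm:cubic-vanishing-symmetry}.

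\emph{Substitution and reordering.} For each \(\boldsymbol a,\boldsymbol b\in\F_3^n\), replace \(\Theta^{(n)}_{\boldsymbol a,\boldsymbol b}(\tau,z)^3\) using \eqref{r: cubic Fourier formula} with \((\mathbf p,\mathbf q)=(\boldsymbol a,\boldsymbol b)\). The left-hand side of \eqref{r: cubic character formula}, multiplied by \(3^n\), then equals
\[
\sum_{\mathbf u,\mathbf v\in\F_3^n}
\Theta^{(n)}_{\mathbf u,\mathbf v}(\tau,Cz)\,\theta^{(n)}_{\mathbf u,\mathbf v}(\tau)^2
\sum_{\boldsymbol a\in\F_3^n}\omega^{\boldsymbol a\cdot(\boldsymbol\lambda+\mathbf v)}
\sum_{\boldsymbol b\in\F_3^n}\omega^{\boldsymbol b\cdot(\mathbf u-\boldsymbol\kappa)}.
\]
All sums here are finite, so the interchange needs no justification.

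\emph{Orthogonality.} By orthogonality of characters of \(\F_3^n\), the inner sum over \(\boldsymbol a\) equals \(3^n\) if \(\mathbf v=-\boldsymbol\lambda\) and \(0\) otherwise. Likewise the sum over \(\boldsymbol b\) equals \(3^n\) if \(\mathbf u=\boldsymbol\kappa\) and \(0\) otherwise. The whole expression therefore collapses to
\[
3^{2n}\,\Theta^{(n)}_{\boldsymbol\kappa,-\boldsymbol\lambda}(\tau,Cz)\,\theta^{(n)}_{\boldsymbol\kappa,-\boldsymbol\lambda}(\tau)^2 .
\]

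\emph{Vanishing.} Since \(\boldsymbol\kappa\cdot(-\boldsymbol\lambda)=-\boldsymbol\lambda\cdot\boldsymbol\kappa\ne0\) in \(\F_3\), the vanishing assertion \eqref{r: cubic vanishing} gives \(\theta^{(n)}_{\boldsymbol\kappa,-\boldsymbol\lambda}=0\). Hence the expression is \(0\), and dividing by \(3^n\) gives \eqref{r: cubic character formula} for every \(z\).

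The only delicate point is bookkeeping of signs: the twist \(\omega^{\boldsymbol\lambda\cdot\boldsymbol a-\boldsymbol\kappa\cdot\boldsymbol b}\) must be matched against \(\omega^{\mathbf p\cdot\mathbf v+\mathbf q\cdot\mathbf u}\) so that the surviving index pairs \(\boldsymbol\kappa\) with \(-\boldsymbol\lambda\). Any sign convention would work, since \eqref{r: cubic vanishing} depends only on whether \(\mathbf p\cdot\mathbf q\) is nonzero, which sign changes do not affect.
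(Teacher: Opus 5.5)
Your proposal is correct and follows the paper's proof essentially verbatim. Like the paper, you apply the inverse finite Fourier transform to \eqref{r: cubic Fourier formula}, use orthogonality to isolate the single term indexed by \((\boldsymbol\kappa,-\boldsymbol\lambda)\), and then kill that term with \eqref{r: cubic vanishing}; your sign and normalization bookkeeping (the factor \(3^{2n}\) against \(3^n\)) is consistent.
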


\begin{proof}
We multiply \eqref{r: cubic Fourier formula} by
\(\omega^{\boldsymbol\lambda\cdot\mathbf p-
\boldsymbol\kappa\cdot\mathbf q}\) and sum over
\(\mathbf p,\mathbf q\in\F_3^n\).  The left-hand side of the
resulting equality is
\[
 3^n\sum_{\mathbf p,\mathbf q\in\F_3^n}
 \omega^{\boldsymbol\lambda\cdot\mathbf p-
          \boldsymbol\kappa\cdot\mathbf q}
 \Theta^{(n)}_{\mathbf p,\mathbf q}(\tau,z)^3,
\]
whereas its right-hand side is
\[
 \sum_{\mathbf u,\mathbf v\in\F_3^n}\Theta^{(n)}_{\mathbf u,\mathbf v}(\tau,Cz)\theta^{(n)}_{\mathbf u,\mathbf v}(\tau)^2
 \left(\sum_{\mathbf p\in\F_3^n}\omega^{\mathbf p\cdot(\boldsymbol\lambda+\mathbf v)}\right)
 \left(\sum_{\mathbf q\in\F_3^n}\omega^{\mathbf q\cdot(\mathbf u-\boldsymbol\kappa)}\right).
\]
For \(\mathbf x\in\F_3^n\), character orthogonality gives
\[
 \sum_{\mathbf p\in\F_3^n}\omega^{\mathbf p\cdot\mathbf x}
 =\begin{cases}3^n,&\mathbf x=\boldsymbol0,\\0,&\mathbf x\ne\boldsymbol0.
 \end{cases}
\]
Consequently, only
\((\mathbf u,\mathbf v)=(\boldsymbol\kappa,-\boldsymbol\lambda)\)
survives.  Dividing by \(3^n\) yields
\[
 \sum_{\boldsymbol a,\boldsymbol b\in\F_3^n}
 \omega^{\boldsymbol\lambda\cdot\boldsymbol a-
          \boldsymbol\kappa\cdot\boldsymbol b}
 \Theta^{(n)}_{\boldsymbol a,\boldsymbol b}(\tau,z)^3
 =3^n
 \Theta^{(n)}_{\boldsymbol\kappa,-\boldsymbol\lambda}(\tau,Cz)
 \theta^{(n)}_{\boldsymbol\kappa,-\boldsymbol\lambda}(\tau)^2.
\]
The right-hand side vanishes by \eqref{r: cubic vanishing}, since
\(\boldsymbol\lambda\cdot\boldsymbol\kappa\ne0\).
\end{proof}

\subsection{Schultz's identity}
\label{sec:schultz_identity}
We give an independent proof of Schultz's identity~\cite{Sch13}.

To formulate the indexed version of the identity, let
\(\tau\in\mathfrak H_1\), \(q=\etp{\tau}\),
\(Q(x,y)=x^2+xy+y^2\), and
\(\boldsymbol u=(u_1,u_2)^T\in\numC^2\).  Write
\(x=\etp{u_1}\) and \(y=\etp{u_2}\).  For \(r,s\in\F_3\), define the
normalized, doubly indexed \(q\)-series below, with \(r\) and \(s\)
represented by \(0,1,2\):
\begin{equation}
\label{r: Schultz q series}
 \mathcal T_{r,s}(\boldsymbol u\mid\tau)
 :=\omega^{rs}\sum_{m,n\in\numZ}
 \omega^{s(m-n)}q^{Q(m+r/3,n+r/3)}
 x^{m+r/3}y^{n+r/3}.
\end{equation}
The fractional powers in \eqref{r: Schultz q series} are interpreted
through the additive variables \(\tau,u_1,u_2\), rather than through
choices of logarithms of \(q,x,y\).  With
\[
 z(\boldsymbol u)=M^{-1}\begin{pmatrix}u_1\\-u_2\end{pmatrix},
\]
the substitution \(g_1=m\), \(g_2=-n\) in
\eqref{eq:SiegelTheta} gives, for all \(r,s\in\F_3\),
\[
 \mathcal T_{r,s}(\boldsymbol u\mid\tau)
 =\omega^{rs}\Theta^{(1)}_{r,s}(\tau,z(\boldsymbol u)).
\]

Under the identification above, the following family is equivalent to
that established by Schultz~\cite[Corollary~4.3]{Sch13}.
Schultz~\cite[p.~622]{Sch13} proved its \(r=s=1\) case earlier in the
same paper.  Chan, Chan, Liu, and Zudilin~\cite[Theorem~1.1]{CCLZ26}
restated this case.  The same authors~\cite[Sections~2 and~4]{CCLZ26}
gave two further proofs.
Zhu~\cite[Theorem~7.12]{Zhu23} gave a computer-assisted proof using
the theory of Jacobi forms.
\begin{thm}[Schultz]
\label{r: Schultz theorem}
Let \(\tau\in\mathfrak H_1\) and \(\boldsymbol u\in\numC^2\).  For
\(r,s\in\F_3\), we have
\begin{equation}
\label{r: Schultz cubic formula}
 \mathcal T_{r,s}(\boldsymbol u\mid\tau)^3
 +\mathcal T_{0,0}(\boldsymbol u\mid\tau)^3
 =\mathcal T_{r,0}(\boldsymbol u\mid\tau)^3
 +\mathcal T_{0,s}(\boldsymbol u\mid\tau)^3.
\end{equation}
\end{thm}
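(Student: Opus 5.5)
The plan is to obtain \eqref{r: Schultz cubic formula} from Corollary~\ref{r: cubic character identity} with \(n=1\), using a finite Fourier inversion on \(\F_3^2\). We never need the full form of \eqref{r: cubic Fourier formula}, only the vanishing it implies.

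First we reduce to the functions \(\Theta^{(1)}_{r,s}\). We have \(\mathcal T_{r,s}(\boldsymbol u\mid\tau)=\omega^{rs}\Theta^{(1)}_{r,s}(\tau,z(\boldsymbol u))\) and \(\omega^{3rs}=1\). Hence each cube satisfies \(\mathcal T_{r,s}(\boldsymbol u\mid\tau)^3=\Theta^{(1)}_{r,s}(\tau,z)^3\) with \(z=z(\boldsymbol u)\). It therefore suffices to prove
\[
F(r,s)+F(0,0)=F(r,0)+F(0,s),\qquad F(a,b):=\Theta^{(1)}_{a,b}(\tau,z)^3,
\]
for all \(r,s\in\F_3\) and all \(z\in\numC^{2,1}\). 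If \(r=0\) or \(s=0\) the two sides coincide trivially, but the argument below does not need this case split.

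Next we apply the corollary. For \(\lambda,\kappa\in\F_3\) define
\[
\widehat F(\lambda,\kappa):=\sum_{a,b\in\F_3}\omega^{\lambda a-\kappa b}F(a,b).
\]
In degree one, \(\lambda\cdot\kappa\ne0\) means simply \(\lambda\kappa\ne0\). So Corollary~\ref{r: cubic character identity} with \(n=1\) says \(\widehat F(\lambda,\kappa)=0\) whenever \(\lambda\ne0\) and \(\kappa\ne0\). Thus \(\widehat F\) is supported on the union of the two axes \(\{\lambda=0\}\cup\{\kappa=0\}\).

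Fourier inversion on \(\F_3^2\) gives
\[
9F(a,b)=\sum_{\lambda,\kappa}\omega^{-\lambda a+\kappa b}\widehat F(\lambda,\kappa).
\]
We split the support as the axis \(\{\lambda=0\}\) plus the axis \(\{\kappa=0\}\) with the origin removed. The terms with \(\lambda=0\) depend only on \(b\), and the remaining terms (\(\kappa=0\), \(\lambda\neq0\)) depend only on \(a\). Hence \(F(a,b)=f(a)+g(b)\) for suitable functions \(f,g\) on \(\F_3\). Then
\[
F(r,s)+F(0,0)=f(r)+g(s)+f(0)+g(0)=F(r,0)+F(0,s),
\]
which is the claim.

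The only step needing care is the bookkeeping in the identification \(\mathcal T_{r,s}=\omega^{rs}\Theta^{(1)}_{r,s}(\tau,z(\boldsymbol u))\), which the paper states just before the theorem. The phases disappear on cubing, and \(z(\boldsymbol u)\) ranges over all of \(\numC^{2}\) as \(\boldsymbol u\) does. So I expect no real obstacle; the substantive input is entirely the vanishing \eqref{r: cubic vanishing} already built into the corollary.
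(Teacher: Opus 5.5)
Your argument is correct and uses the same two inputs as the paper, namely the Fourier formula \eqref{r: cubic Fourier formula} and the vanishing \eqref{r: cubic vanishing}. You pass through Corollary~\ref{r: cubic character identity} and Fourier inversion to show \(F(a,b)=f(a)+g(b)\); the paper instead adds the \((r,s)\) and \((0,0)\) instances of \eqref{r: cubic Fourier formula} and subtracts the \((r,0)\) and \((0,s)\) instances, so that the coefficient \((1-\omega^{sa})(1-\omega^{rb})\) kills every term with \(a=0\) or \(b=0\), and \eqref{r: cubic vanishing} kills the rest. Your separability statement is a tidy reformulation of the theorem for all \(r,s\) simultaneously, since \(F(a,b)+F(0,0)=F(a,0)+F(0,b)\) for all \(a,b\) is equivalent to \(F\) being a function of \(a\) plus a function of \(b\).
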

\begin{proof}
Fix \(r,s\in\F_3\).  In \eqref{r: cubic Fourier formula}, take
\(n=1\) and \(z=z(\boldsymbol u)\).  Adding the instances indexed by
\((r,s)\) and \((0,0)\) and subtracting those indexed by \((r,0)\)
and \((0,s)\) gives the right-hand side
\[
 \sum_{a,b\in\F_3}(1-\omega^{sa})(1-\omega^{rb})
 \Theta^{(1)}_{a,b}(\tau,Cz(\boldsymbol u))
 \theta^{(1)}_{a,b}(\tau)^2.
\]
Every summand vanishes.  Indeed, if \(sa=0\) or \(rb=0\), then its
coefficient is zero; otherwise \(a,b\ne0\), so \(ab\ne0\) in
\(\F_3\), and \eqref{r: cubic vanishing} gives
\(\theta^{(1)}_{a,b}(\tau)=0\).  Since
\(\mathcal T_{r,s}^3=(\Theta^{(1)}_{r,s})^3\), the corresponding
left-hand side is three times the difference between the two sides of
\eqref{r: Schultz cubic formula}.
\end{proof}

\medskip
\noindent\emph{Remark.}
For the four functions in the Introduction, the definitions in
\eqref{eq:intro Schultz functions} and
\eqref{r: Schultz q series} give
\[
 \Theta_1=\mathcal T_{1,1},\qquad
 \Theta_2=\mathcal T_{1,0},\qquad
 \Theta_3=\mathcal T_{0,0},\qquad
 \Theta_4=\mathcal T_{0,1}.
\]
Thus the \(r=s=1\) member of
\eqref{r: Schultz cubic formula} is precisely
\eqref{eq:intro Schultz identity}, originally proved by
Schultz~\cite[p.~622]{Sch13}.

At \(\boldsymbol u=0\), we use the customary notation
\begin{equation}
\label{r: Borwein series}
 \begin{aligned}
 a(q)&=\mathcal T_{0,0}(0\mid\tau)
      =\sum_{m,n\in\numZ}q^{Q(m,n)},\\
 b(q)&=\mathcal T_{0,1}(0\mid\tau)
      =\sum_{m,n\in\numZ}\omega^{m-n}q^{Q(m,n)},\\
 c(q)&=\mathcal T_{1,0}(0\mid\tau)
      =\sum_{m,n\in\numZ}q^{Q(m+1/3,n+1/3)}.
 \end{aligned}
\end{equation}
For \(r=s=1\) and \(\boldsymbol u=0\), the identification above and
\eqref{r: cubic vanishing} show that
\(\mathcal T_{1,1}(0\mid\tau)=0\).  Thus
\eqref{r: Schultz cubic formula} specializes to the cubic identity of
Borwein and Borwein~\cite[Eq.~(2.3)]{BB91}.
\begin{coro}
\label{coro:Borweins}
For \(\abs{q}<1\), we have
\begin{equation}
\label{r: Borwein formula}
 a(q)^3=b(q)^3+c(q)^3.
\end{equation}
\end{coro}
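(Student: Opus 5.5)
The plan is to obtain Corollary~\ref{coro:Borweins} by specializing Theorem~\ref{r: Schultz theorem} to \(r=s=1\) and \(\boldsymbol u=0\), for \(q=\etp{\tau}\) with \(\tau\in\mathfrak H_1\). With these choices, \eqref{r: Schultz cubic formula} reads
\[
 \mathcal T_{1,1}(0\mid\tau)^3+\mathcal T_{0,0}(0\mid\tau)^3
 =\mathcal T_{1,0}(0\mid\tau)^3+\mathcal T_{0,1}(0\mid\tau)^3 .
\]
By the definitions in \eqref{r: Borwein series}, the terms \(\mathcal T_{0,0}(0\mid\tau)\), \(\mathcal T_{0,1}(0\mid\tau)\) and \(\mathcal T_{1,0}(0\mid\tau)\) are \(a(q)\), \(b(q)\) and \(c(q)\). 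For these three I will simply check that the series in \eqref{r: Schultz q series} at \(x=y=1\) agree with \eqref{r: Borwein series}; in each case the prefactor \(\omega^{rs}\) equals \(1\).

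It remains to show \(\mathcal T_{1,1}(0\mid\tau)=0\). By the identification preceding Theorem~\ref{r: Schultz theorem}, \(\mathcal T_{1,1}(0\mid\tau)=\omega\,\Theta^{(1)}_{1,1}(\tau,z(0))\). Since \(z(0)=0\), this equals \(\omega\,\theta^{(1)}_{1,1}(\tau)\). Here \(\mathbf p\cdot\mathbf q=1\ne0\) in \(\F_3\), so \eqref{r: cubic vanishing} of Lemma~\ref{lemm:cubic-vanishing-symmetry} gives \(\theta^{(1)}_{1,1}=0\). Substituting everything into the displayed identity yields \(a(q)^3=b(q)^3+c(q)^3\).

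The statement is phrased for \(\abs q<1\) rather than \(\tau\in\mathfrak H_1\), so there is one more step. Each of \(a,b,c\) is a power series in \(q^{1/3}\). Indeed, \(Q(m+1/3,n+1/3)=Q(m,n)+m+n+1/3\), so \(c(q)=q^{1/3}\cdot(\text{series in }q)\). The only real care needed is here, because \(c\) carries the fractional exponent and so \(q^{1/3}\) must be fixed. I will define \(c(q)\) via \(\tau\) with any fixed branch of \(q^{1/3}\). This is harmless: \(c(q)^3=q\,(\sum_{m,n}q^{Q(m,n)+m+n})^3\) is a genuine power series in \(q\), and \(a(q)^3\) and \(b(q)^3\) are as well. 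The identity therefore holds on the punctured disc \(0<\abs q<1\) independently of the branch. It then extends to \(q=0\) by continuity, where it reads \(1=1+0\).
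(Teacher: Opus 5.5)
Your proposal is correct and follows the paper's route exactly: specialize Theorem~\ref{r: Schultz theorem} to \(r=s=1\), \(\boldsymbol u=0\), identify \(\mathcal T_{0,0},\mathcal T_{0,1},\mathcal T_{1,0}\) at \(\boldsymbol u=0\) with \(a,b,c\), and remove \(\mathcal T_{1,1}(0\mid\tau)=\omega\,\theta^{(1)}_{1,1}(\tau)\) using \eqref{r: cubic vanishing}. Your extra paragraph passing from \(\tau\in\mathfrak H_1\) to \(\abs q<1\) is a valid detail that the paper leaves implicit. It rests on the branch-independence of \(c(q)^3=q\bigl(\sum_{m,n}q^{Q(m,n)+m+n}\bigr)^3\) and the separate check at \(q=0\).
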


\subsection{Degree two}
\label{r: cubic degree two subsection}
We derive degree-two analogues of the Borweins' cubic identity
(Corollary~\ref{coro:Borweins}).


Let
\[
 \tau=\begin{pmatrix}\tau_1&\tau_0\\\tau_0&\tau_2\end{pmatrix}
 \in\mathfrak H_2,\qquad
 q=\etp{\tau_1},\quad s=\etp{\tau_0},\quad t=\etp{\tau_2}.
\]
For
\(\boldsymbol a=(a_1,a_2)\) and
\(\boldsymbol b=(b_1,b_2)\) in \(\F_3^2\), the definition
\eqref{r: indexed cubic theta} gives the following expansion, obtained
by writing the two columns of \(g+\rho\boldsymbol a\) as
\((j,-k)^T\) and \((r,-h)^T\):
\begin{equation}
\label{r: degree two q series}
 \theta^{(2)}_{\boldsymbol a,\boldsymbol b}(\tau)
 =\sum_{\substack{j,k\in\numZ+a_1/3\\r,h\in\numZ+a_2/3}}
 \omega^{b_1(j-k)+b_2(r-h)}q^{Q(j,k)}
 s^{(2j+k)r+(j+2k)h}t^{Q(r,h)}.
\end{equation}
We use the notation
\[
\boldsymbol 0=(0,0),\quad
 \boldsymbol e_1=(1,0),\quad
 \boldsymbol e_2=(0,1),\quad
 \boldsymbol d_+=(1,1),\quad
 \boldsymbol d_-=(1,-1)=(1,2).
\]
The pairs \((\boldsymbol a,\boldsymbol b)\) satisfying
\(\boldsymbol a\cdot\boldsymbol b=0\), modulo the two sign
symmetries in \eqref{r: cubic sign symmetry}, are represented by
\begin{equation}
\label{r: degree two representatives}
\begin{aligned}
\mathcal I=\{&
(\boldsymbol0,\boldsymbol0),
(\boldsymbol0,\boldsymbol e_1),(\boldsymbol0,\boldsymbol e_2),
(\boldsymbol0,\boldsymbol d_+),(\boldsymbol0,\boldsymbol d_-),\\
&(\boldsymbol e_1,\boldsymbol0),(\boldsymbol e_2,\boldsymbol0),
(\boldsymbol d_+,\boldsymbol0),(\boldsymbol d_-,\boldsymbol0),
(\boldsymbol e_1,\boldsymbol e_2),(\boldsymbol e_2,\boldsymbol e_1),
(\boldsymbol d_+,\boldsymbol d_-),(\boldsymbol d_-,\boldsymbol d_+)\}.
\end{aligned}
\end{equation}
The first five representatives in \(\mathcal I\) have
\(\boldsymbol a=\boldsymbol0\), the next four have
\(\boldsymbol b=\boldsymbol0\) and \(\boldsymbol a\ne\boldsymbol0\),
and the final four have both entries nonzero.  These three groups
represent precisely five, four, and four orbits, respectively, under the two sign
symmetries in \eqref{r: cubic sign symmetry}.  By
Lemma~\ref{lemm:cubic-vanishing-symmetry}, every nonzero indexed series
is therefore represented by one of the 13 pairs in \(\mathcal I\).
A comparison of the lowest-order terms in
\eqref{r: degree two q series} shows that these 13 series are nonzero
and pairwise distinct.  For completeness, let
\(c_{A,B,C}=[q^As^Bt^C]\) and consider, in order, the seven triples
\[
 (1/3,-1/3,1/3),\ (1/3,1/3,1/3),\ (1/3,0,1),\
 (1,0,1/3),\ (0,0,1),\ (1,0,0),\ (1,-1,1).
\]
Direct extraction from \eqref{r: degree two q series} gives the
following coefficient vectors, in the order in which the 13
representatives occur in \(\mathcal I\):
\[
\begin{gathered}
 (0,0,0,0,6,6,12),\quad (0,0,0,0,6,-3,-6),\quad
 (0,0,0,0,-3,6,-6),\\
 (0,0,0,0,-3,-3,-6),\quad (0,0,0,0,-3,-3,12),\\
 (0,0,6,0,0,0,0),\quad (0,0,0,6,0,0,0),\quad
 (6,0,0,0,0,0,0),\quad (0,6,0,0,0,0,0),\\
 (0,0,-3,0,0,0,0),\quad (0,0,0,-3,0,0,0),\quad
 (-3,0,0,0,0,0,0),\quad (0,-3,0,0,0,0,0).
\end{gathered}
\]
These vectors show again that the 13 series are nonzero and pairwise
distinct.  We index them by
\((\boldsymbol a,\boldsymbol b)\in\mathcal I\).
In the remainder of this subsection, we suppress the superscript and
write $\theta_{\boldsymbol a,\boldsymbol b}(\tau)=\theta^{(2)}_{\boldsymbol a,\boldsymbol b}(\tau)$.

The following linear-algebraic calculation applies to both the cubic
and square identities.  For an array
\(X=(X_{\boldsymbol a,\boldsymbol b})\) indexed by
\(\F_3^2\times\F_3^2\), we write
\[
 \mathcal S_{\boldsymbol\lambda,\boldsymbol\kappa}(X)
 =\sum_{\boldsymbol a,\boldsymbol b\in\F_3^2}
 \omega^{\boldsymbol\lambda\cdot\boldsymbol a-
          \boldsymbol\kappa\cdot\boldsymbol b}
 X_{\boldsymbol a,\boldsymbol b}.
\]

\begin{lemm}
\label{r: degree two character reduction}
Suppose that
\[
 X_{\boldsymbol a,\boldsymbol b}=0
 \quad\text{if }\boldsymbol a\cdot\boldsymbol b\ne0,
 \qquad
 X_{\boldsymbol a,\boldsymbol b}
 =X_{-\boldsymbol a,\boldsymbol b}
 =X_{\boldsymbol a,-\boldsymbol b}
 =X_{-\boldsymbol a,-\boldsymbol b}.
\]
Let
\[
\begin{gathered}
 S_1=\mathcal S_{\boldsymbol e_2,\boldsymbol e_2}(X),\quad
 S_2=\mathcal S_{\boldsymbol e_2,\boldsymbol d_+}(X),\quad
 S_3=\mathcal S_{\boldsymbol e_2,\boldsymbol d_-}(X),\quad
 S_4=\mathcal S_{\boldsymbol e_1,\boldsymbol e_1}(X),\\
 S_5=\mathcal S_{\boldsymbol e_1,\boldsymbol d_+}(X),\quad
 S_6=\mathcal S_{\boldsymbol d_+,\boldsymbol e_2}(X),\quad
 S_7=\mathcal S_{\boldsymbol d_-,\boldsymbol e_2}(X).
\end{gathered}
\]
We define \(\boldsymbol R(X)=(R_1,\ldots,R_7)^T\) by
\[
\begin{aligned}
 R_1={}&X_{\boldsymbol0,\boldsymbol0}
 -X_{\boldsymbol0,\boldsymbol d_+}
 -X_{\boldsymbol d_+,\boldsymbol0}
 -X_{\boldsymbol e_2,\boldsymbol e_1}
 -X_{\boldsymbol e_1,\boldsymbol e_2},\\
 R_2={}&(X_{\boldsymbol0,\boldsymbol d_-}
 -X_{\boldsymbol d_+,\boldsymbol0})
 -(X_{\boldsymbol0,\boldsymbol d_+}
 -X_{\boldsymbol d_-,\boldsymbol0}),\\
 R_3={}&(X_{\boldsymbol0,\boldsymbol d_-}
 -X_{\boldsymbol d_+,\boldsymbol0})
 -(X_{\boldsymbol0,\boldsymbol e_1}
 -X_{\boldsymbol e_2,\boldsymbol0}),\\
 R_4={}&(X_{\boldsymbol0,\boldsymbol d_-}
 -X_{\boldsymbol d_+,\boldsymbol0})
 -(X_{\boldsymbol0,\boldsymbol e_2}
 -X_{\boldsymbol e_1,\boldsymbol0}),\\
 R_5={}&(X_{\boldsymbol0,\boldsymbol d_-}
 -X_{\boldsymbol d_+,\boldsymbol d_-})
 -(X_{\boldsymbol0,\boldsymbol d_+}
 -X_{\boldsymbol d_-,\boldsymbol d_+}),\\
 R_6={}&(X_{\boldsymbol0,\boldsymbol d_-}
 -X_{\boldsymbol d_+,\boldsymbol d_-})
 -(X_{\boldsymbol0,\boldsymbol e_1}
 -X_{\boldsymbol e_2,\boldsymbol e_1}),\\
 R_7={}&(X_{\boldsymbol0,\boldsymbol d_-}
 -X_{\boldsymbol d_+,\boldsymbol d_-})
 -(X_{\boldsymbol0,\boldsymbol e_2}
 -X_{\boldsymbol e_1,\boldsymbol e_2}).
\end{aligned}
\]
Then
\begin{equation}
\label{r: degree two reduction matrix}
 3\boldsymbol R(X)=
 \begin{pmatrix}
  0& 1& 0& 1& 0& 1& 0\\
  0& 1&-1& 0& 0& 1&-1\\
  0& 0& 0& 0& 1& 0&-1\\
  1& 0& 0&-1& 1& 0&-1\\
  0& 1&-1& 0& 0& 0& 0\\
 -1& 1& 0& 0& 0& 0& 0\\
  0& 0& 0&-1& 1& 0& 0
 \end{pmatrix}
 \begin{pmatrix}S_1\\S_2\\S_3\\S_4\\S_5\\S_6\\S_7\end{pmatrix}.
\end{equation}
Moreover, the seven components of \(\boldsymbol R(X)\) are linearly
independent as linear forms in the thirteen entries
\(X_{\boldsymbol a,\boldsymbol b}\) indexed by
\((\boldsymbol a,\boldsymbol b)\in\mathcal I\).
\end{lemm}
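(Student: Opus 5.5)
The statement is a finite linear-algebra identity: each \(S_k\) is a linear form in the 81 entries \(X_{\boldsymbol a,\boldsymbol b}\), and under the two hypotheses (vanishing when \(\boldsymbol a\cdot\boldsymbol b\ne0\), invariance under \(\boldsymbol a\mapsto-\boldsymbol a\) and \(\boldsymbol b\mapsto-\boldsymbol b\)) it collapses to a linear form in the thirteen orbit representatives indexed by \(\mathcal I\). I would first compute these reduced forms. For an orbit of \((\boldsymbol a,\boldsymbol b)\) under the sign group, the contribution to \(\mathcal S_{\boldsymbol\lambda,\boldsymbol\kappa}(X)\) is \(X_{\boldsymbol a,\boldsymbol b}\) times
\[
\sum_{\epsilon,\delta}\omega^{\epsilon\boldsymbol\lambda\cdot\boldsymbol a-\delta\boldsymbol\kappa\cdot\boldsymbol b},
\]
summed over the distinct sign pairs in the orbit. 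This factors as \(c(\boldsymbol\lambda\cdot\boldsymbol a)\,c(\boldsymbol\kappa\cdot\boldsymbol b)\) with \(c(0)=1\) and \(c(\pm1)=\omega+\omega^{-1}=-1\), up to the orbit multiplicities. The multiplicities are \(1\) for \((\boldsymbol0,\boldsymbol0)\), \(2\) when exactly one of \(\boldsymbol a,\boldsymbol b\) is zero, and \(4\) when both are nonzero. So the weight of \(X_{\boldsymbol a,\boldsymbol b}\) in \(\mathcal S_{\boldsymbol\lambda,\boldsymbol\kappa}\) is
\[
\bigl(2\cdot[\boldsymbol\lambda\cdot\boldsymbol a=0]-1\bigr)^{[\boldsymbol a\ne0]}\cdot 1^{[\boldsymbol a=0]}\times(\text{same for }\boldsymbol\kappa,\boldsymbol b),
\]
that is, a factor \(2\) or \(-1\) for each nonzero component.

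With this rule I would write each \(S_1,\dots,S_7\) explicitly as a vector in \(\numZ^{13}\), one coordinate per element of \(\mathcal I\) in the listed order, by evaluating the dot products \(\boldsymbol\lambda\cdot\boldsymbol a\) and \(\boldsymbol\kappa\cdot\boldsymbol b\) modulo \(3\) for the five values \(\boldsymbol0,\boldsymbol e_1,\boldsymbol e_2,\boldsymbol d_\pm\). For example, \(\boldsymbol e_2\cdot\boldsymbol d_-=-1\ne0\) while \(\boldsymbol d_+\cdot\boldsymbol d_-=0\). I would likewise write \(3R_1,\dots,3R_7\) as vectors in \(\numZ^{13}\) directly from their definitions. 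The identity \eqref{r: degree two reduction matrix} then reduces to checking seven integer-vector equations: each row of the displayed matrix, applied to the \(S\)-vectors, must reproduce \(3R_k\). This is a finite mechanical verification, and I expect it to be the main obstacle. It is entirely bookkeeping, and the risk is a sign or multiplicity slip, so I would tabulate the \(7\times13\) matrix of the \(S_k\) once and check each row combination entrywise.

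For independence, I would read off from the definitions that \(R_1,\dots,R_7\) involve distinct "pivot" entries. Specifically, \(R_1\) alone contains \(X_{\boldsymbol0,\boldsymbol0}\). The entries \(X_{\boldsymbol d_-,\boldsymbol0}\), \(X_{\boldsymbol e_2,\boldsymbol0}\) and \(X_{\boldsymbol e_1,\boldsymbol0}\) occur only in \(R_2\), \(R_3\) and \(R_4\) respectively. The entries \(X_{\boldsymbol d_-,\boldsymbol d_+}\), \(X_{\boldsymbol e_2,\boldsymbol e_1}\) and \(X_{\boldsymbol e_1,\boldsymbol e_2}\) occur only in \(R_5\), \(R_6\) and \(R_7\) respectively, apart from \(R_1\), which is already separated by \(X_{\boldsymbol0,\boldsymbol0}\).

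Concretely, take any relation \(\sum c_kR_k=0\). The coefficient of \(X_{\boldsymbol0,\boldsymbol0}\) forces \(c_1=0\). The coefficients of those six remaining pivot entries then force \(c_2=\dots=c_7=0\). Hence the \(7\times13\) coefficient matrix has rank \(7\). Alternatively, one can invert the displayed \(7\times7\) matrix, but the pivot argument avoids computing its determinant.
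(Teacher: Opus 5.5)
Your proposal is correct and follows the paper's route. You collapse each character sum onto the thirteen representatives in \(\mathcal I\) via the weights \(1+1=2\) and \(\omega+\omega^{-1}=-1\), check the seven row combinations entrywise (they do hold), and exhibit a nonsingular \(7\times7\) minor; your pivot columns give a unitriangular minor, which is slightly more transparent than the paper's choice of columns. One small slip: the displayed factor \(2\cdot[\boldsymbol\lambda\cdot\boldsymbol a=0]-1\) takes the values \(\pm1\) rather than \(2\) or \(-1\); it should read \(3\cdot[\boldsymbol\lambda\cdot\boldsymbol a=0]-1\), as your next sentence correctly says.
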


\begin{proof}
In each character sum \(\mathcal S_{\boldsymbol\lambda,\boldsymbol\kappa}(X)\),
the vanishing condition removes the pairs \((\boldsymbol a,\boldsymbol b)\)
with nonzero scalar product.  Grouping the remaining terms into the
sign orbits represented by \(\mathcal I\) gives coefficients determined
by \(1+1=2\) and \(\omega+\omega^{-1}=-1\).  For example,
\begin{align*}
S_1=&X_{\boldsymbol0,\boldsymbol0} + 2X_{\boldsymbol0,\boldsymbol e_1} - X_{\boldsymbol0,\boldsymbol e_2} - X_{\boldsymbol0,\boldsymbol d_+} - X_{\boldsymbol0,\boldsymbol d_-}\\
&+ 2X_{\boldsymbol e_1,\boldsymbol0} - X_{\boldsymbol e_2,\boldsymbol0} - X_{\boldsymbol d_+,\boldsymbol0} - X_{\boldsymbol d_-,\boldsymbol0}\\
&- 2X_{\boldsymbol e_1,\boldsymbol e_2} - 2X_{\boldsymbol e_2,\boldsymbol e_1} + X_{\boldsymbol d_+,\boldsymbol d_-} + X_{\boldsymbol d_-,\boldsymbol d_+}.
\end{align*}
Expanding the seven indicated character
sums as above gives \eqref{r: degree two reduction matrix}.  To prove
independence, consider the minor formed by the columns corresponding to
\[
 X_{\boldsymbol0,\boldsymbol0},\quad
 X_{\boldsymbol0,\boldsymbol e_1},\quad
 X_{\boldsymbol0,\boldsymbol e_2},\quad
 X_{\boldsymbol0,\boldsymbol d_+},\quad
 X_{\boldsymbol e_1,\boldsymbol0},\quad
 X_{\boldsymbol e_2,\boldsymbol0},\quad
 X_{\boldsymbol d_+,\boldsymbol0}
\]
in the coefficient matrix of \(\boldsymbol R(X)\) has determinant
\(1\).
\end{proof}

\begin{thm}
\label{r: degree two cubic theorem}
The cubes of the thirteen theta series
\eqref{r: degree two q series}, indexed by the set
\(\mathcal I\) in \eqref{r: degree two representatives}, satisfy
\begin{equation}
\label{r: degree two main relation}
 \theta_{\boldsymbol0,\boldsymbol0}^3
 =\theta_{\boldsymbol0,\boldsymbol d_+}^3
  +\theta_{\boldsymbol d_+,\boldsymbol0}^3
  +\theta_{\boldsymbol e_2,\boldsymbol e_1}^3
  +\theta_{\boldsymbol e_1,\boldsymbol e_2}^3,
\end{equation}
\begin{equation}
\label{r: degree two first chain}
 \theta_{\boldsymbol0,\boldsymbol d_-}^3-\theta_{\boldsymbol d_+,\boldsymbol0}^3
 =\theta_{\boldsymbol0,\boldsymbol d_+}^3-\theta_{\boldsymbol d_-,\boldsymbol0}^3
 =\theta_{\boldsymbol0,\boldsymbol e_1}^3-\theta_{\boldsymbol e_2,\boldsymbol0}^3
 =\theta_{\boldsymbol0,\boldsymbol e_2}^3-\theta_{\boldsymbol e_1,\boldsymbol0}^3,
\end{equation}
and
\begin{equation}
\label{r: degree two second chain}
 \theta_{\boldsymbol0,\boldsymbol d_-}^3-\theta_{\boldsymbol d_+,\boldsymbol d_-}^3
 =\theta_{\boldsymbol0,\boldsymbol d_+}^3-\theta_{\boldsymbol d_-,\boldsymbol d_+}^3
 =\theta_{\boldsymbol0,\boldsymbol e_1}^3-\theta_{\boldsymbol e_2,\boldsymbol e_1}^3
 =\theta_{\boldsymbol0,\boldsymbol e_2}^3-\theta_{\boldsymbol e_1,\boldsymbol e_2}^3.
\end{equation}
\end{thm}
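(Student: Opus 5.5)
The plan is to apply Lemma~\ref{r: degree two character reduction} to the array \(X_{\boldsymbol a,\boldsymbol b}:=\theta^{(2)}_{\boldsymbol a,\boldsymbol b}(\tau)^3\), indexed by \(\F_3^2\times\F_3^2\). The hypotheses of that lemma hold by Lemma~\ref{lemm:cubic-vanishing-symmetry}. If \(\boldsymbol a\cdot\boldsymbol b\ne0\), then \(\theta^{(2)}_{\boldsymbol a,\boldsymbol b}=0\) by \eqref{r: cubic vanishing}, so \(X_{\boldsymbol a,\boldsymbol b}=0\). The sign symmetries \eqref{r: cubic sign symmetry} pass to cubes, so \(X\) is invariant under \(\boldsymbol a\mapsto-\boldsymbol a\) and \(\boldsymbol b\mapsto-\boldsymbol b\).

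Next, I will show that each of the seven character sums \(S_1,\dots,S_7\) vanishes. Each one has the form \(\mathcal S_{\boldsymbol\lambda,\boldsymbol\kappa}(X)\) with
\[
(\boldsymbol\lambda,\boldsymbol\kappa)\in\{(\boldsymbol e_2,\boldsymbol e_2),(\boldsymbol e_2,\boldsymbol d_+),(\boldsymbol e_2,\boldsymbol d_-),(\boldsymbol e_1,\boldsymbol e_1),(\boldsymbol e_1,\boldsymbol d_+),(\boldsymbol d_+,\boldsymbol e_2),(\boldsymbol d_-,\boldsymbol e_2)\}.
\]
By Corollary~\ref{r: cubic character identity} with \(n=2\) and \(z=0\), such a sum is zero whenever \(\boldsymbol\lambda\cdot\boldsymbol\kappa\ne0\) in \(\F_3\). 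A quick check confirms this for all seven pairs: the scalar products are \(1,1,2,1,1,1,2\). Hence \(S_1=\cdots=S_7=0\).

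Finally, \eqref{r: degree two reduction matrix} gives \(3\boldsymbol R(X)=0\), so \(R_1=\cdots=R_7=0\). Reading these off gives the stated relations directly:
\begin{itemize}
\item \(R_1=0\) is \eqref{r: degree two main relation}.
\item \(R_2=R_3=R_4=0\) say that the three later differences in \eqref{r: degree two first chain} each equal \(\theta_{\boldsymbol0,\boldsymbol d_-}^3-\theta_{\boldsymbol d_+,\boldsymbol0}^3\), which is \eqref{r: degree two first chain}.
\item In the same way, \(R_5=R_6=R_7=0\) give \eqref{r: degree two second chain}.
\end{itemize}
The independence statement in Lemma~\ref{r: degree two character reduction} then shows that these seven relations are independent.

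There is no serious obstacle, because all the substance sits in the earlier results. The one point to verify carefully is that each of the seven chosen \((\boldsymbol\lambda,\boldsymbol\kappa)\) really has \(\boldsymbol\lambda\cdot\boldsymbol\kappa\ne0\). That check is done above.
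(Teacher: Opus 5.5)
Your proposal is correct and follows the paper's proof step for step. Like the paper, you apply Lemma~\ref{r: degree two character reduction} to $X_{\boldsymbol a,\boldsymbol b}=\theta^{(2)}_{\boldsymbol a,\boldsymbol b}(\tau)^3$, get its hypotheses from Lemma~\ref{lemm:cubic-vanishing-symmetry}, make the seven character sums vanish via Corollary~\ref{r: cubic character identity} with $n=2$, $z=0$, and read off $\boldsymbol R(X)=0$. Your explicit check of the seven scalar products and your remark on independence are harmless additions that the paper leaves implicit.
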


\begin{proof}
We apply Lemma~\ref{r: degree two character reduction} to
\[
 X_{\boldsymbol a,\boldsymbol b}
 =\theta_{\boldsymbol a,\boldsymbol b}(\tau)^3.
\]
Its hypotheses follow from \eqref{r: cubic vanishing} and
\eqref{r: cubic sign symmetry}.  Corollary~\ref{r: cubic character identity},
with \(n=2\) and \(z=0\), shows that each of the seven character sums
in \eqref{r: degree two reduction matrix} vanishes.  Hence
\(\boldsymbol R(X)=0\), which is precisely
\eqref{r: degree two main relation},
\eqref{r: degree two first chain}, and
\eqref{r: degree two second chain}.
\end{proof}

\section{Square companions}
\label{r: square companions section}
\subsection{The square Fourier identity}

We retain the notation of Section~\ref{r: cubic theta section}.
The following square counterpart of
Theorem~\ref{r: cubic Fourier theorem} is a direct specialization of
Theorem~\ref{thm:main}.
\begin{thm}
\label{thm:square-counterpart}
Let \(n\geq1\), \(\tau\in\mathfrak H_n\),
\(z\in\numC^{2,n}\), and \(C=\tbtmat{1}{1}{-1}{2}\).
For every \(\mathbf p,\mathbf q\in\F_3^n\),
\begin{equation}
\label{r: square Fourier formula}
3^n\Theta^{(n)}_{\mathbf p,\mathbf q}(\tau,z)
\Theta^{(n)}_{\mathbf p,-\mathbf q}(2\tau,2z)
=\sum_{\mathbf u,\mathbf v\in\F_3^n}
\omega^{\mathbf p\cdot\mathbf v+\mathbf q\cdot\mathbf u}
\Theta^{(n)}_{\mathbf u,\mathbf v}(\tau,Cz)
\theta^{(n)}_{\mathbf u,\mathbf v}(2\tau).
\end{equation}
At \(z=0\), the second theta factor on the left-hand side may be written with
\(\mathbf q\) in place of \(-\mathbf q\).
\end{thm}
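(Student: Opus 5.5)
We apply Theorem~\ref{thm:main} with \(\mu=1\), \(m=4\) and a block-diagonal form, rather than \(\mu=2\). The basic observation comes from the defining series \eqref{eq:SiegelTheta}:
\[
\Theta_{M,n}[a,b](2\tau,2z)=\Theta_{2M,n}[a,b/2](\tau,z),
\]
since \(\tfrac12(2\tau)(g+a)^TM(g+a)+(g+a)^TM(2z+b)=\tfrac12\tau(g+a)^T(2M)(g+a)+(g+a)^T(2M)(z+b/2)\). Hence, with \(N:=\mathrm{diag}(M,2M)\in\mathrm{GL}_4^{\mathrm{pos}}(\numQ)\), the left-hand side of \eqref{r: square Fourier formula} is \(3^n\) times the single theta series
\[
\Theta_{N,n}\bigl[[\rho\mathbf p,\rho\mathbf p],[\sigma\mathbf q,-\sigma\mathbf q/2]\bigr](\tau,[z,z]).
\]
The summand on the right is likewise \(\Theta_{N,n}\bigl[[\rho\mathbf u,\rho\mathbf u'],[\sigma\mathbf v,\ast]\bigr](\tau,[Cz,0])\) for a suitable \(\mathbf u'\) and second \(b\)-component. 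So it suffices to apply Theorem~\ref{thm:main} with \(M_1=M_2=N\), \(\mu=1\), and a matrix \(A\in\mathrm{GL}_4(\numQ)\) such that \(A^TNA=N\) and \(A^{-1}[V,V]=[CV,0]\).

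\textbf{Construction of \(A\).} The condition \(A^{-1}[V,V]=[CV,0]\) forces the first block column of \(A\) to be \([C^{-1},C^{-1}]\). This column is \(N\)-isometric provided \(3C^{-T}MC^{-1}=M\), that is, \(C^TMC=3M\), which holds: \(MC=\tbtmat{3}{0}{-3}{3}\), so \(C^TMC=\tbtmat{6}{-3}{-3}{6}\). For the second block column \([X,Y]\) we need \(N\)-orthogonality to the first column, which forces \(X=-2Y\), and norm \(2M\), which forces \(6Y^TMY=2M\). Both hold for \(Y=-C^{-1}\), \(X=2C^{-1}\). Thus
\[
A=\begin{pmatrix}C^{-1}&2C^{-1}\\ C^{-1}&-C^{-1}\end{pmatrix}.
\]
We then verify \(A^TNA=N\) by direct multiplication, compute \(\numZ^{4,1}+A\numZ^{4,1}\), and obtain \(\widetilde A\) with \(\abs{\det\widetilde A}=3\), either from Lemma~\ref{lemm:findA1} or by the explicit generator argument used in the proof of Theorem~\ref{r: cubic Fourier theorem}. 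Both quotients \eqref{eq:alphaset} and \eqref{eq:betaset} then have order \(3\) columnwise.

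\textbf{Representatives and transformed parameters.} Next we show that \(\{\widehat\rho\,\mathbf u\}\) and \(\{\widehat\sigma'\mathbf v\}\) are complete systems of representatives for the two quotients, where \(\widehat\rho=[\rho,\rho]\) and \(\widehat\sigma'\) is an appropriate lift such as \([\sigma,\pm\sigma/2]\). As in the cubic proof, it is enough to check that \(\widetilde AA\widehat\rho\) and \(\widetilde A^{-T}NA\widehat\sigma'\) are nonintegral, so that the classes are nonzero in groups of order \(3\). We also compute
\[
a'=A^{-1}[\rho\mathbf p,\rho\mathbf p]=[C\rho\mathbf p,0],\qquad
b'=A^{-1}[\sigma\mathbf q,-\sigma\mathbf q/2],\qquad
w=[Cz,0].
\]
We expect \(b'\) to be \([C\sigma\mathbf q,\text{(something in }\sigma\numZ)]\), possibly modulo \(N^{-1}\numZ\).

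\textbf{Identification of the right-hand side.} Each summand of \eqref{eq:mainIdentity} is a product of a theta series with matrix \(M\) evaluated at \((\tau,Cz)\) and a theta series with matrix \(2M\) evaluated at \((\tau,0)\). Using the observation above in reverse, the latter is \(\theta_{M,n}[\rho\mathbf u,2(\cdot)](2\tau)\). Lemma~\ref{lemm:twochar} then reduces the characteristics, exactly as in \eqref{r: cubic characteristic transformation}, to \(\Theta^{(n)}_{\mathbf u,\mathbf v}(\tau,Cz)\,\theta^{(n)}_{\mathbf u,\pm\mathbf v}(2\tau)\), together with the phase \(\omega^{\mathbf p\cdot\mathbf v+\mathbf q\cdot\mathbf u}\) coming from the exterior factor and from Lemma~\ref{lemm:twochar}. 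The final remark of the theorem, that at \(z=0\) the factor \(\theta_{\mathbf p,-\mathbf q}\) may be replaced by \(\theta_{\mathbf p,\mathbf q}\), follows from the sign symmetry \eqref{r: cubic sign symmetry}; the same symmetry also absorbs a possible \(\pm\mathbf v\) on the right-hand side.

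\textbf{Main obstacle.} The main difficulty is the bookkeeping of the \(2M\) block: the lifts \(\widehat\sigma'\), the residual \(b'\)-component, and the phases from Lemma~\ref{lemm:twochar}. These must produce exactly the coefficient \(\omega^{\mathbf p\cdot\mathbf v+\mathbf q\cdot\mathbf u}\) together with the \(-\mathbf q\) on the left-hand side. I would first carry out the \(n=1\) one-column computation explicitly, then fix the sign conventions.
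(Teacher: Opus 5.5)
This is the paper's proof: it also uses $m=4$, $\mu=1$, $M_1=M_2=\operatorname{diag}(M,2M)$ and the same $\widetilde A$, with $A$ equal to yours up to negating the second block column and with $b_1=[\sigma,\sigma]\mathbf q$ in place of your $[\sigma,-\sigma/2]\mathbf q$ (the two give the same product by Lemma~\ref{lemm:twochar}), and the bookkeeping you deferred does close up. For your $A$, the correct checks are that $\widetilde AA[\rho,\rho]=e_1$ is integral while $[\rho,\rho]\notin\numZ^{4,1}$, and that $NA\widehat\sigma'$ is integral while $\widetilde A^{-T}NA\widehat\sigma'$ is not; this holds for $\widehat\sigma'=[\sigma,-\sigma/2]$ or $[\sigma,\sigma]$ but fails for $[\sigma,\sigma/2]$, and one has exactly $b'=[0,C\sigma\mathbf q/2]$, so the phase $\omega^{\mathbf q\cdot\mathbf u}$ comes from the $2M$ factor via $MC\sigma=e_1$.
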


\begin{proof}
We apply Theorem~\ref{thm:main} with \(m=4\), \(\mu=1\),
\(M_1=M_2=\widetilde M=\operatorname{diag}(M,2M)\), and
\begin{equation*}
 A=\frac13
 \begin{pmatrix}
  2&-1&-4& 2\\
  1& 1&-2&-2\\
  2&-1& 2&-1\\
  1& 1& 1& 1
 \end{pmatrix},\qquad
 \widetilde{A}=\begin{pmatrix}
  1&0&0&-2\\
  0&1&0&-1\\
  0&0&1&-2\\
  0&0&0& 3
 \end{pmatrix}.
\end{equation*}
Direct multiplication gives
\[
A^T\widetilde M A=\widetilde M,\qquad
\det A=1,\qquad\det \widetilde{A}=3.
\]
All columns of \(3A\) are congruent to \(u_0=(2,1,2,1)^T\) modulo
\(3\numZ^4\).  The first three columns of \(\widetilde{A}^{-1}\)
are \(e_1,e_2,e_3\), and its last column is \(u_0/3\).  Since
\(e_4=u_0-2e_1-e_2-2e_3\), it follows that
\[
 \numZ^{4,n}+A\numZ^{4,n}=\widetilde{A}^{-1}\numZ^{4,n}.
\]
We choose the characteristics and Jacobi variable as
\[
a_1=[\rho,\rho]\mathbf p,\qquad
b_1=[\sigma,\sigma]\mathbf q,\qquad
z_1=[z,z].
\]
More explicitly,
\begin{align}
\label{eq:LHS41-becomeSquare}
\text{LHS of }\eqref{eq:mainIdentity}=&\abs{\det \widetilde{A}}^n\cdot\prod_{i=1}^{\mu}\Theta_{M_1,n}[a_i,b_i](\tau,z_i)\\
=&3^n\Theta_{M,n}[\rho\mathbf p,\sigma\mathbf q](\tau,z)\Theta_{2M,n}[\rho\mathbf p,\sigma\mathbf q](\tau,z)\notag\\
=&3^n\Theta^{(n)}_{\mathbf p,\mathbf q}(\tau,z)\Theta^{(n)}_{\mathbf p,-\mathbf q}(2\tau,2z),\notag
\end{align}
where the equality
$\Theta_{2M,n}[\rho\mathbf p,\sigma\mathbf q](\tau,z)=\Theta^{(n)}_{\mathbf p,-\mathbf q}(2\tau,2z)$
follows from Lemma~\ref{lemm:twochar}.
Complete systems of representatives for the two quotients are
\[
\alpha_{\mathbf u}:=\begin{pmatrix}\rho\\-\rho\end{pmatrix}\mathbf u,\qquad
\beta_{\mathbf v}:=\begin{pmatrix}\sigma\\-\sigma\end{pmatrix}\mathbf v,
\qquad\mathbf u,\mathbf v\in\F_3^n.
\]
Indeed,
\[
 \widetilde{A}A\begin{pmatrix}\rho\\-\rho\end{pmatrix}=e_1,
 \qquad
 \widetilde M A\begin{pmatrix}\sigma\\-\sigma\end{pmatrix}=e_2.
\]
Multiplication by \(\widetilde M A\) identifies the second quotient
with \(\numZ^{4,n}/\widetilde{A}^T\numZ^{4,n}\).
The two quotient groups in \eqref{eq:alphaset} and
\eqref{eq:betaset} both have order \(3^n\); the displayed classes
are nonzero because
\(\bigl(\rho^T,-\rho^T\bigr)^T\notin\numZ^4\) and
\(\widetilde{A}^{-T}e_2=(0,1,0,1/3)^T\notin\numZ^4\).
Since the quotients and the displayed parametrizations are formed
column by column, the parametrizations are complete for every \(n\).
Moreover,
\[
a_1'=\begin{pmatrix}C\rho\\0\end{pmatrix}\mathbf p,\qquad
b_1'=\begin{pmatrix}C\sigma\\0\end{pmatrix}\mathbf q,\qquad
w_1=\begin{pmatrix}Cz\\0\end{pmatrix}.
\]
Therefore,
\begin{align*}
\text{RHS of }\eqref{eq:mainIdentity}&=\sum_{\mathbf u,\mathbf v\in\F_3^n}\etp{-\tr\left(a_1'^T\widetilde{M}\beta_{\mathbf v}\right)}\Theta_{\widetilde{M},n}[a_1'+\alpha_{\mathbf u},b_1'+\beta_{\mathbf v}](\tau,w_1)\\
&=\sum_{\mathbf u,\mathbf v\in\F_3^n}\omega^{\mathbf p\cdot\mathbf v+\mathbf q\cdot\mathbf u}\Theta^{(n)}_{\mathbf u,\mathbf v}(\tau,Cz)\theta^{(n)}_{\mathbf u,\mathbf v}(2\tau).
\end{align*}
The second equality follows from Lemma~\ref{lemm:twochar},
\eqref{r: cubic characteristic transformation}, and
\eqref{r: cubic sign symmetry}.  Combining this identity with
\eqref{eq:LHS41-becomeSquare} in \eqref{eq:mainIdentity} gives
\eqref{r: square Fourier formula}.  The specialization \(z=0\),
together with \eqref{r: cubic sign symmetry}, proves the final
assertion.
\end{proof}

The finite Fourier transform of
\eqref{r: square Fourier formula} yields the square counterpart of
Corollary~\ref{r: cubic character identity}.  This is the form used
both for the square Schultz identity and for the degree-two
relations below.

\begin{coro}
\label{r: cubic square character identity}
Let \(n\geq1\), \(\tau\in\mathfrak H_n\), and
\(z\in\numC^{2,n}\).  If
\(\boldsymbol\lambda,\boldsymbol\kappa\in\F_3^n\) and
\(\boldsymbol\lambda\cdot\boldsymbol\kappa\ne0\), then
\begin{equation}
\label{r: cubic square character formula}
\sum_{\boldsymbol a,\boldsymbol b\in\F_3^n}
 \omega^{\boldsymbol\lambda\cdot\boldsymbol a-
          \boldsymbol\kappa\cdot\boldsymbol b}
 \Theta^{(n)}_{\boldsymbol a,\boldsymbol b}(\tau,z)
 \Theta^{(n)}_{\boldsymbol a,-\boldsymbol b}(2\tau,2z)=0.
\end{equation}
\end{coro}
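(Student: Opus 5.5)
The plan is to copy the proof of Corollary~\ref{r: cubic character identity} almost verbatim, with Theorem~\ref{thm:square-counterpart} in place of Theorem~\ref{r: cubic Fourier theorem}. Multiply \eqref{r: square Fourier formula} by \(\omega^{\boldsymbol\lambda\cdot\mathbf p-\boldsymbol\kappa\cdot\mathbf q}\) and sum over all \(\mathbf p,\mathbf q\in\F_3^n\). The left-hand side becomes
\[
3^n\sum_{\mathbf p,\mathbf q\in\F_3^n}\omega^{\boldsymbol\lambda\cdot\mathbf p-\boldsymbol\kappa\cdot\mathbf q}\Theta^{(n)}_{\mathbf p,\mathbf q}(\tau,z)\Theta^{(n)}_{\mathbf p,-\mathbf q}(2\tau,2z),
\]
which is \(3^n\) times the sum in \eqref{r: cubic square character formula}. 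Because the phase and the theta factors depend only on the classes of \(\mathbf p,\mathbf q\) modulo \(3\) (Lemma~\ref{lemm:twochar}), the summation is well-defined.

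On the right-hand side, interchange the finite sums. This gives
\[
\sum_{\mathbf u,\mathbf v}\Theta^{(n)}_{\mathbf u,\mathbf v}(\tau,Cz)\,\theta^{(n)}_{\mathbf u,\mathbf v}(2\tau)\Bigl(\sum_{\mathbf p}\omega^{\mathbf p\cdot(\boldsymbol\lambda+\mathbf v)}\Bigr)\Bigl(\sum_{\mathbf q}\omega^{\mathbf q\cdot(\mathbf u-\boldsymbol\kappa)}\Bigr).
\]
By character orthogonality on \(\F_3^n\), the inner sums vanish unless \(\mathbf v=-\boldsymbol\lambda\) and \(\mathbf u=\boldsymbol\kappa\), in which case each equals \(3^n\). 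After dividing by \(3^n\), the sum in \eqref{r: cubic square character formula} therefore equals
\[
3^n\,\Theta^{(n)}_{\boldsymbol\kappa,-\boldsymbol\lambda}(\tau,Cz)\,\theta^{(n)}_{\boldsymbol\kappa,-\boldsymbol\lambda}(2\tau).
\]

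The last step is to show this term vanishes. Since \(\boldsymbol\kappa\cdot(-\boldsymbol\lambda)=-\boldsymbol\lambda\cdot\boldsymbol\kappa\ne0\), \eqref{r: cubic vanishing} gives \(\theta^{(n)}_{\boldsymbol\kappa,-\boldsymbol\lambda}=0\) as a function on \(\mathfrak H_n\). In particular it vanishes at \(2\tau\), which lies in \(\mathfrak H_n\) because \(\tau\) does. This kills the surviving term and proves the corollary.

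I expect no real obstacle. The only points needing care are keeping track of the sign conventions (the \(-\mathbf q\) in the second factor and the \(-\boldsymbol\kappa\) in the exponent) and noting that \eqref{r: cubic vanishing} is applied to a theta constant evaluated at \(2\tau\).
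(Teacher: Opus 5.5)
Your proposal is correct and follows the paper's proof essentially step by step. You multiply \eqref{r: square Fourier formula} by \(\omega^{\boldsymbol\lambda\cdot\mathbf p-\boldsymbol\kappa\cdot\mathbf q}\), sum over \(\mathbf p,\mathbf q\in\F_3^n\), and use character orthogonality to isolate the \((\mathbf u,\mathbf v)=(\boldsymbol\kappa,-\boldsymbol\lambda)\) term, which then vanishes by \eqref{r: cubic vanishing} applied at \(2\tau\).
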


\begin{proof}
We multiply \eqref{r: square Fourier formula} by
\(\omega^{\boldsymbol\lambda\cdot\mathbf p-
\boldsymbol\kappa\cdot\mathbf q}\) and sum over
\(\mathbf p,\mathbf q\in\F_3^n\).  The left-hand side of the
resulting equality is
\[
 3^n\sum_{\mathbf p,\mathbf q\in\F_3^n}
 \omega^{\boldsymbol\lambda\cdot\mathbf p-
          \boldsymbol\kappa\cdot\mathbf q}
 \Theta^{(n)}_{\mathbf p,\mathbf q}(\tau,z)
 \Theta^{(n)}_{\mathbf p,-\mathbf q}(2\tau,2z),
\]
whereas its right-hand side is
\[
 \sum_{\mathbf u,\mathbf v\in\F_3^n}
  \Theta^{(n)}_{\mathbf u,\mathbf v}(\tau,Cz)
  \theta^{(n)}_{\mathbf u,\mathbf v}(2\tau)
 \left(\sum_{\mathbf p\in\F_3^n}
 \omega^{\mathbf p\cdot(\boldsymbol\lambda+\mathbf v)}\right)
 \left(\sum_{\mathbf q\in\F_3^n}
 \omega^{\mathbf q\cdot(\mathbf u-\boldsymbol\kappa)}\right).
\]
For \(\mathbf x\in\F_3^n\), character orthogonality gives
\[
 \sum_{\mathbf p\in\F_3^n}\omega^{\mathbf p\cdot\mathbf x}
 =\begin{cases}3^n,&\mathbf x=\boldsymbol0,\\
                 0,&\mathbf x\ne\boldsymbol0.
  \end{cases}
\]
Consequently, only
\((\mathbf u,\mathbf v)=(\boldsymbol\kappa,-\boldsymbol\lambda)\)
survives.  Dividing by \(3^n\), we obtain
\[
 \sum_{\boldsymbol a,\boldsymbol b\in\F_3^n}
 \omega^{\boldsymbol\lambda\cdot\boldsymbol a-
          \boldsymbol\kappa\cdot\boldsymbol b}
 \Theta^{(n)}_{\boldsymbol a,\boldsymbol b}(\tau,z)
 \Theta^{(n)}_{\boldsymbol a,-\boldsymbol b}(2\tau,2z)
  =3^n
 \Theta^{(n)}_{\boldsymbol\kappa,-\boldsymbol\lambda}(\tau,Cz)
 \theta^{(n)}_{\boldsymbol\kappa,-\boldsymbol\lambda}(2\tau).
\]
Since
\(\boldsymbol\kappa\cdot(-\boldsymbol\lambda)
=-\boldsymbol\lambda\cdot\boldsymbol\kappa\ne0\), the theta
constant on the right-hand side vanishes by
\eqref{r: cubic vanishing}, applied with \(2\tau\) in place of
\(\tau\).  This proves
\eqref{r: cubic square character formula}.
\end{proof}

\subsection{The square analogue of Schultz's identity}
We retain the notation of Subsection~\ref{sec:schultz_identity}.
For \(\boldsymbol u=(u_1,u_2)^T\), write
\begin{equation}
\label{r: Schultz star variable}
 \boldsymbol u^*=\begin{pmatrix}u_1-u_2\\-u_2\end{pmatrix}.
\end{equation}

\begin{thm}
\label{r: Schultz square theorem}
Let \(\tau\in\mathfrak H_1\) and \(\boldsymbol u\in\numC^2\).  For
\(r,s\in\F_3\), we have
\begin{equation}
\label{r: Schultz square formula}
\begin{aligned}
&\mathcal T_{r,s}(\boldsymbol u\mid\tau)
 \mathcal T_{r,s}(2\boldsymbol u^*\mid2\tau)
+\mathcal T_{0,0}(\boldsymbol u\mid\tau)
 \mathcal T_{0,0}(2\boldsymbol u^*\mid2\tau)\\
&\qquad=
\mathcal T_{r,0}(\boldsymbol u\mid\tau)
 \mathcal T_{r,0}(2\boldsymbol u^*\mid2\tau)
+\mathcal T_{0,s}(\boldsymbol u\mid\tau)
 \mathcal T_{0,s}(2\boldsymbol u^*\mid2\tau).
\end{aligned}
\end{equation}
\end{thm}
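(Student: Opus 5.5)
We follow the proof of Theorem~\ref{r: Schultz theorem} line by line, replacing the cubic Fourier formula by the square Fourier formula \eqref{r: square Fourier formula}. Fix \(r,s\in\F_3\). Take \(n=1\) and \(z=z(\boldsymbol u)=M^{-1}(u_1,-u_2)^T\) in \eqref{r: square Fourier formula}. Add the instances indexed by \((\mathbf p,\mathbf q)=(r,s)\) and \((0,0)\), and subtract those indexed by \((r,0)\) and \((0,s)\). The right-hand side becomes
\[
 \sum_{a,b\in\F_3}(1-\omega^{sa})(1-\omega^{rb})\,
 \Theta^{(1)}_{a,b}(\tau,Cz(\boldsymbol u))\,\theta^{(1)}_{a,b}(2\tau).
\]
Each summand vanishes. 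If \(sa=0\) or \(rb=0\), the coefficient is zero. Otherwise \(a,b\neq0\), so \(ab\ne0\), and \eqref{r: cubic vanishing} applied at \(2\tau\) gives \(\theta^{(1)}_{a,b}(2\tau)=0\). Hence
\[
 \sum_{(\mathbf p,\mathbf q)}\pm\,\Theta^{(1)}_{\mathbf p,\mathbf q}(\tau,z)\,\Theta^{(1)}_{\mathbf p,-\mathbf q}(2\tau,2z)=0,
\]
with signs \(+,+,-,-\) on \((r,s),(0,0),(r,0),(0,s)\).

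It remains to identify each product with \(\mathcal T_{\mathbf p,\mathbf q}(\boldsymbol u\mid\tau)\,\mathcal T_{\mathbf p,\mathbf q}(2\boldsymbol u^*\mid2\tau)\). The first factor is handled by the identity \(\mathcal T_{r,s}(\boldsymbol u\mid\tau)=\omega^{rs}\Theta^{(1)}_{r,s}(\tau,z(\boldsymbol u))\) from Subsection~\ref{sec:schultz_identity}. For the second factor we have \(\mathcal T_{\mathbf p,\mathbf q}(2\boldsymbol u^*\mid2\tau)=\omega^{\mathbf p\mathbf q}\Theta^{(1)}_{\mathbf p,\mathbf q}(2\tau,z(2\boldsymbol u^*))\), where \(z(2\boldsymbol u^*)=M^{-1}(2u_1-2u_2,\,2u_2)^T\). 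We must therefore relate \(\Theta^{(1)}_{\mathbf p,-\mathbf q}(2\tau,2z(\boldsymbol u))\), with \(2z(\boldsymbol u)=M^{-1}(2u_1,-2u_2)^T\), to \(\Theta^{(1)}_{\mathbf p,\mathbf q}(2\tau,z(2\boldsymbol u^*))\).

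To do this, look for an integral automorphism \(P\) of \(M\) (\(P^TMP=M\)) with \(P\,M^{-1}(x,-y)^T=M^{-1}(x-y,y)^T\). Equivalently, \(P=M^{-1}KM\) with \(K=\tbtmat{1}{1}{0}{-1}\), which requires \(K^{T}M^{-1}K=M^{-1}\). Among the twelve elements of the automorphism group generated by \(R,J,-1\) (Lemma~\ref{lemm:cubic-vanishing-symmetry}), one should do this. Substituting \(g\mapsto Pg\) in \eqref{eq:SiegelTheta} gives
\[
 \Theta_{M,1}[a,b](\tau,z)=\Theta_{M,1}[Pa,Pb](\tau,Pz).
\]
We then check that \(P\rho\equiv\pm\rho\) modulo \(\numZ^2\) and that \(P\sigma\equiv\mp\sigma\) modulo \(M^{-1}\numZ^2\), with the signs chosen so that characteristic \((\mathbf p,-\mathbf q)\) is sent to \((\mathbf p,\mathbf q)\) up to the sign symmetries. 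Lemma~\ref{lemm:twochar} then converts the result into \(\Theta^{(1)}_{\mathbf p,\mathbf q}(2\tau,z(2\boldsymbol u^*))\) times an explicit cube root of unity \(\omega^{c\,\mathbf p\mathbf q}\).

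The main obstacle is this bookkeeping of roots of unity. The phases are trivial for the three terms \((0,0),(r,0),(0,s)\), since there \(\mathbf p\mathbf q=0\). For the \((r,s)\) term the total phase is \(\omega^{(2+c)rs}\), and one must verify that it equals \(1\), i.e.\ \(c\equiv1\). If the first candidate \(P\) produces the wrong phase, we replace \(P\) by \(-P\) or compose with \(J\); this changes the sign conventions and hence \(c\). If \(rs=0\), the identity is trivial anyway, since then the four terms cancel in pairs.

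Once the phase is confirmed, the displayed four-term vanishing becomes exactly
\[
 \mathcal T_{r,s}\mathcal T_{r,s}^*+\mathcal T_{0,0}\mathcal T_{0,0}^*-\mathcal T_{r,0}\mathcal T_{r,0}^*-\mathcal T_{0,s}\mathcal T_{0,s}^*=0,
\]
where \(\mathcal T^*\) denotes evaluation at \((2\boldsymbol u^*\mid2\tau)\). This is \eqref{r: Schultz square formula}.
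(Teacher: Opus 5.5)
Your route is the paper's: the square Fourier formula at \(n=1\) and \(z=z(\boldsymbol u)\), the same four-term combination, and the same vanishing of the right-hand side. The last step then uses a lattice automorphism to turn the second factor into \(\mathcal T_{\mathbf p,\mathbf q}(2\boldsymbol u^*\mid2\tau)\). The paper does that step directly on the \(q\)-series by \(m'=m\), \(n'=-m-n-r\), which gives \(\mathcal T_{r,s}(\boldsymbol u^*\mid\tau)=\mathcal T_{r,-s}(\boldsymbol u\mid\tau)\). Writing \(x=g+\rho r=(m+r/3,-(n+r/3))^T\), that substitution is exactly \(x\mapsto Px\) for your \(P\). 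However, your proposal stops at the one step with new content. You do not exhibit \(P\), check the congruences, or compute the phase, and two of your remarks about this step are wrong.

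First, you have no freedom to ``replace \(P\) by \(-P\) or compose with \(J\)''. The requirement \(P\cdot2z(\boldsymbol u)=z(2\boldsymbol u^*)\) for all \(\boldsymbol u\) forces \(P=M^{-1}KM\), and any other automorphism changes the Jacobi variable, not the phase. The constant \(c\) is fixed by an identity between two given functions, so if it were wrong no rechoice would rescue the argument. For the same reason, the sign symmetries \eqref{r: cubic sign symmetry}, which are statements about theta constants, cannot be invoked at \(z\ne0\). Second, take your stated convention \(\Theta^{(1)}_{\mathbf p,-\mathbf q}(2\tau,2z(\boldsymbol u))=\omega^{c\mathbf p\mathbf q}\Theta^{(1)}_{\mathbf p,\mathbf q}(2\tau,z(2\boldsymbol u^*))\) together with \(\Theta^{(1)}_{\mathbf p,\mathbf q}(\tau,z(\boldsymbol u))=\omega^{-\mathbf p\mathbf q}\mathcal T_{\mathbf p,\mathbf q}(\boldsymbol u\mid\tau)\). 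Then the total phase is \(\omega^{(c-2)\mathbf p\mathbf q}\), not \(\omega^{(2+c)\mathbf p\mathbf q}\). The correct criterion is therefore \(c\equiv-1\), and your criterion would have rejected the correct, and only, \(P\).

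The missing computation is short. One finds \(P=M^{-1}KM=\tbtmat{1}{0}{1}{-1}=-JR^2\), so \(P^TMP=M\). Also \(P\rho=\rho+(0,1)^T\) and \(P\sigma=-\sigma+M^{-1}(1,0)^T\). Hence \(\Theta_{M,1}[\rho\mathbf p,-\sigma\mathbf q](2\tau,2z(\boldsymbol u))=\Theta_{M,1}[P\rho\mathbf p,-P\sigma\mathbf q](2\tau,z(2\boldsymbol u^*))\). Lemma~\ref{lemm:twochar}, applied with \(a_1=\rho\mathbf p\), \(b_1=\sigma\mathbf q\), and \(b_2-b_1=-M^{-1}(1,0)^T\mathbf q\), gives the factor \(\etp{-\mathbf p\mathbf q/3}\), so \(c=-1\). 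The product on the left of \eqref{r: square Fourier formula} is then \(\omega^{-3\mathbf p\mathbf q}\mathcal T_{\mathbf p,\mathbf q}(\boldsymbol u\mid\tau)\mathcal T_{\mathbf p,\mathbf q}(2\boldsymbol u^*\mid2\tau)\), which equals \(\mathcal T_{\mathbf p,\mathbf q}(\boldsymbol u\mid\tau)\mathcal T_{\mathbf p,\mathbf q}(2\boldsymbol u^*\mid2\tau)\). With this inserted, your argument is complete. Working with \(\mathcal T\), as the paper does, absorbs the normalization \(\omega^{rs}\) and reduces the phase check to \(3s(m+r)\in3\numZ\).
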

\begin{proof}
For \(r,s\in\F_3\), the change of summation variables
\[
 m'=m,\qquad n'=-m-n-r
\]
in \eqref{r: Schultz q series} gives
\begin{equation}
\label{r: Schultz star transformation}
 \mathcal T_{r,s}(\boldsymbol u^*\mid\tau)
 =\mathcal T_{r,-s}(\boldsymbol u\mid\tau).
\end{equation}
Indeed, if \(a=m+r/3\) and \(b=n+r/3\), then
\(n'+r/3=-a-b\), and hence
\[
 Q(a,b)=Q(a,-a-b),\qquad
 a(u_1-u_2)-bu_2=au_1+(-a-b)u_2.
\]
Moreover,
\[
 \omega^{rs+s(m-n)}
 =\omega^{-rs-s(m'-n')},
\]
because
\[
 [rs+s(m-n)]-[-rs-s(m'-n')]=3s(m+r)\in3\numZ.
\]
This is exactly the
character factor in \(\mathcal T_{r,-s}\).

In \eqref{r: square Fourier formula}, take \(n=1\) and
\(z=z(\boldsymbol u)\).  Adding the instances indexed by
\((r,s)\) and \((0,0)\) and subtracting those indexed by \((r,0)\)
and \((0,s)\) gives the right-hand side
\[
 \sum_{a,b\in\F_3}(1-\omega^{sa})(1-\omega^{rb})
 \Theta^{(1)}_{a,b}(\tau,Cz(\boldsymbol u))
 \theta^{(1)}_{a,b}(2\tau).
\]
Every summand vanishes: its coefficient is zero if \(sa=0\) or
\(rb=0\); otherwise \(a,b\ne0\), and
\eqref{r: cubic vanishing} applies.

The normalization in \eqref{r: Schultz q series} cancels between
the two factors on the left-hand side of
\eqref{r: square Fourier formula}.  Thus the instance indexed by
\((p,q)\) has left-hand side
\[
 3\mathcal T_{p,q}(\boldsymbol u\mid\tau)
   \mathcal T_{p,-q}(2\boldsymbol u\mid2\tau).
\]
By \eqref{r: Schultz star transformation}, this equals
\[
 3\mathcal T_{p,q}(\boldsymbol u\mid\tau)
   \mathcal T_{p,q}(2\boldsymbol u^*\mid2\tau).
\]
The indicated linear combination is therefore three times the
difference between the two sides of
\eqref{r: Schultz square formula}, which proves the result.
\end{proof}

\begin{rema}
Taking \(r=s=1\) and using the correspondence in the remark after
Theorem~\ref{r: Schultz theorem}, formula
\eqref{r: Schultz square formula} becomes exactly the
single-subscript identity \eqref{eq:intro Schultz square identity} in
Schultz's notation~\cite[pp.~621--622]{Sch13}.
\end{rema}

Borwein, Borwein, and Garvan~\cite[Theorem~2.6(i)]{BBG94} proved the
quadratic modular equation obtained by taking \(r=s=1\) and
\(\boldsymbol u=0\) in Theorem~\ref{r: Schultz square theorem}.
The same authors~\cite[Theorem~2.6(ii)]{BBG94} stated the equivalent
form obtained by combining this equation with \eqref{r: Borwein formula}.
\begin{coro}
For \(\abs{q}<1\), we have
\begin{equation}
\label{r: Ramanujan square formula}
 a(q)a(q^2)=b(q)b(q^2)+c(q)c(q^2).
\end{equation}
Moreover, if \(a(q)a(q^2)\ne0\), then, with
\(\lambda=c(q)/a(q)\) and \(\nu=c(q^2)/a(q^2)\), we have
\begin{equation*}
 (1-\lambda\nu)^3=(1-\lambda^3)(1-\nu^3).
\end{equation*}
\end{coro}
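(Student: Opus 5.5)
The first identity \eqref{r: Ramanujan square formula} is the case \(r=s=1\), \(\boldsymbol u=\boldsymbol0\) of Theorem~\ref{r: Schultz square theorem}. At \(\boldsymbol u=\boldsymbol0\) we have \(\boldsymbol u^*=\boldsymbol0\). By \eqref{r: Borwein series}, \(\mathcal T_{0,0}(0\mid\tau)=a(q)\), \(\mathcal T_{0,1}(0\mid\tau)=b(q)\), and \(\mathcal T_{1,0}(0\mid\tau)=c(q)\); replacing \(\tau\) by \(2\tau\) gives \(a(q^2)\), \(b(q^2)\), and \(c(q^2)\). The remaining term is \(\mathcal T_{1,1}(0\mid\tau)\mathcal T_{1,1}(0\mid2\tau)\). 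Its first factor is \(\omega\,\theta^{(1)}_{1,1}(\tau)\), which vanishes by \eqref{r: cubic vanishing} since \(1\cdot1\ne0\) in \(\F_3\). Then \eqref{r: Schultz square formula} reads \(a(q)a(q^2)=b(q)b(q^2)+c(q)c(q^2)\). Since both sides are holomorphic in \(\tau\), the identity holds for all \(\abs q<1\) with \(q=\etp{\tau}\). The case \(q=0\) is trivial, as \(a(0)=b(0)=1\) and \(c(0)=0\).

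For the second assertion, combine \eqref{r: Ramanujan square formula} with Corollary~\ref{coro:Borweins}, applied at both \(q\) and \(q^2\). Dividing by \(a(q)a(q^2)\) gives
\[
\frac{b(q)b(q^2)}{a(q)a(q^2)}=1-\lambda\nu.
\]
Dividing \(a^3=b^3+c^3\) by \(a^3\) at \(q\) and at \(q^2\) gives
\[
\Bigl(\frac{b(q)}{a(q)}\Bigr)^3=1-\lambda^3,\qquad \Bigl(\frac{b(q^2)}{a(q^2)}\Bigr)^3=1-\nu^3.
\]
Cubing the first relation and multiplying the other two yields \((1-\lambda\nu)^3=(1-\lambda^3)(1-\nu^3)\).

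No step is a serious obstacle. The only points needing care are the vanishing of \(\mathcal T_{1,1}\) at the origin, which follows from the relation \(\mathcal T_{r,s}=\omega^{rs}\Theta^{(1)}_{r,s}\) together with Lemma~\ref{lemm:cubic-vanishing-symmetry}, and the observation that the \(2\tau\) evaluations are exactly the series at \(q^2\), which is immediate from \eqref{r: Borwein series}.
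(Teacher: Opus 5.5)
Your proposal is correct and follows the paper's proof essentially verbatim. You specialize Theorem~\ref{r: Schultz square theorem} to $r=s=1$, $\boldsymbol u=\boldsymbol 0$, remove the $\mathcal T_{1,1}$ term via $\mathcal T_{1,1}(0\mid\tau)=\omega\,\theta^{(1)}_{1,1}(\tau)=0$, and then combine the resulting identity with the Borweins' cubic identity at $q$ and $q^2$. Your extra remarks (the holomorphy comment and the check at $q=0$) are harmless but not needed, since the theorem already holds for every $\tau\in\mathfrak H_1$.
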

\begin{proof}
For \(r=s=1\) and \(\boldsymbol u=0\),
\(\boldsymbol u^*=0\), and the term involving
\(\mathcal T_{1,1}\) vanishes by the identification preceding
Theorem~\ref{r: Schultz theorem} and
\eqref{r: cubic vanishing}.  Formula
\eqref{r: Ramanujan square formula} follows from this fact,
\eqref{r: Borwein series}, and
\eqref{r: Schultz square formula}.  The second conclusion follows from
\eqref{r: Borwein formula} and \eqref{r: Ramanujan square formula}.
\end{proof}

\subsection{Degree two}

We retain the notation of
Subsection~\ref{r: cubic degree two subsection}.  Define
\[
 X_{\boldsymbol a,\boldsymbol b}
 =
 \theta^{(2)}_{\boldsymbol a,\boldsymbol b}(\tau)
 \theta^{(2)}_{\boldsymbol a,\boldsymbol b}(2\tau).
\]
Specializing Corollary~\ref{r: cubic square character identity} to
\(n=2\) and \(z=0\), and using
\eqref{r: cubic sign symmetry} in the second factor, gives
\begin{equation}
\label{r: degree two square character formula}
 \mathcal S_{\boldsymbol\lambda,\boldsymbol\kappa}(X)=0
 \qquad
 (\boldsymbol\lambda\cdot\boldsymbol\kappa\ne0).
\end{equation}
As in the preceding degree-two subsection, we suppress the superscript
and write $\theta_{\boldsymbol a,\boldsymbol b}(\tau)=\theta^{(2)}_{\boldsymbol a,\boldsymbol b}(\tau)$.

\begin{thm}
\label{r: degree two square theorem}
The seven relations
\eqref{r: degree two main relation},
\eqref{r: degree two first chain}, and
\eqref{r: degree two second chain}
remain valid after the uniform replacement
\begin{equation}
\label{r: cube square replacement}
\theta_{\boldsymbol a,\boldsymbol b}(\tau)^3
\longmapsto
\theta_{\boldsymbol a,\boldsymbol b}(\tau)
\theta_{\boldsymbol a,\boldsymbol b}(2\tau).
\end{equation}
\end{thm}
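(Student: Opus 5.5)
The plan is to mirror the proof of Theorem~\ref{r: degree two cubic theorem} exactly, replacing Corollary~\ref{r: cubic character identity} by its square analogue \eqref{r: degree two square character formula}. We set
\[
 X_{\boldsymbol a,\boldsymbol b}=\theta_{\boldsymbol a,\boldsymbol b}(\tau)\theta_{\boldsymbol a,\boldsymbol b}(2\tau),\qquad \boldsymbol a,\boldsymbol b\in\F_3^2,
\]
and apply Lemma~\ref{r: degree two character reduction} to this array. Once its hypotheses are checked, the lemma expresses $3\boldsymbol R(X)$ as an explicit integer combination of the seven character sums $S_1,\dots,S_7$.

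First we verify the two hypotheses of the lemma. For the vanishing hypothesis, suppose $\boldsymbol a\cdot\boldsymbol b\ne0$. Then \eqref{r: cubic vanishing}, which holds for every point of $\mathfrak H_2$ and in particular at $\tau$, gives $\theta_{\boldsymbol a,\boldsymbol b}(\tau)=0$, so $X_{\boldsymbol a,\boldsymbol b}=0$. For the sign-symmetry hypothesis, \eqref{r: cubic sign symmetry} at both $\tau$ and $2\tau$ shows that each factor is invariant under $\boldsymbol a\mapsto-\boldsymbol a$ and $\boldsymbol b\mapsto-\boldsymbol b$. Hence so is the product.

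Next we check that each of the seven sums vanishes. Every $S_j$ is of the form $\mathcal S_{\boldsymbol\lambda,\boldsymbol\kappa}(X)$ with $(\boldsymbol\lambda,\boldsymbol\kappa)$ drawn from
\[
(\boldsymbol e_2,\boldsymbol e_2),\ (\boldsymbol e_2,\boldsymbol d_\pm),\ (\boldsymbol e_1,\boldsymbol e_1),\ (\boldsymbol e_1,\boldsymbol d_+),\ (\boldsymbol d_\pm,\boldsymbol e_2).
\]
For each pair we compute $\boldsymbol\lambda\cdot\boldsymbol\kappa$ modulo $3$: it is $1,1,-1,1,1,1,-1$ respectively, so it is nonzero in every case. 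Therefore \eqref{r: degree two square character formula} gives $S_j=0$ for all $j$.

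Then \eqref{r: degree two reduction matrix} yields $3\boldsymbol R(X)=0$, hence $\boldsymbol R(X)=0$. Written out, the seven components $R_1,\dots,R_7$ are exactly the relations \eqref{r: degree two main relation}, \eqref{r: degree two first chain}, and \eqref{r: degree two second chain}, with each $\theta^3$ replaced by $\theta(\tau)\theta(2\tau)$ as in \eqref{r: cube square replacement}.

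The only point needing care is the derivation of \eqref{r: degree two square character formula} from Corollary~\ref{r: cubic square character identity}, whose second factor carries the index $-\boldsymbol b$. At $z=0$ this factor is $\theta^{(2)}_{\boldsymbol a,-\boldsymbol b}(2\tau)=\theta^{(2)}_{\boldsymbol a,\boldsymbol b}(2\tau)$ by \eqref{r: cubic sign symmetry}, which the text preceding the theorem already records. Independence of the seven relations carries over verbatim from Lemma~\ref{r: degree two character reduction}, since that statement concerns linear forms and not the particular array $X$.
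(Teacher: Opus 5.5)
Your proposal is correct and is essentially the paper's proof: apply Lemma~\ref{r: degree two character reduction} to $X_{\boldsymbol a,\boldsymbol b}=\theta_{\boldsymbol a,\boldsymbol b}(\tau)\theta_{\boldsymbol a,\boldsymbol b}(2\tau)$, check its hypotheses via \eqref{r: cubic vanishing} and \eqref{r: cubic sign symmetry}, and use \eqref{r: degree two square character formula} to make $S_1,\dots,S_7$ vanish, so that $\boldsymbol R(X)=0$. Your explicit check that $\boldsymbol\lambda\cdot\boldsymbol\kappa\ne0$ for all seven pairs spells out a step the paper leaves implicit.
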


\begin{proof}
The array \(X\) satisfies the hypotheses of
Lemma~\ref{r: degree two character reduction} by
\eqref{r: cubic vanishing} and \eqref{r: cubic sign symmetry}.
Equation~\eqref{r: degree two square character formula} makes the
seven character sums in \eqref{r: degree two reduction matrix}
vanish.  Hence \(\boldsymbol R(X)=0\), which gives the stated
relations after the replacement \eqref{r: cube square replacement}.
\end{proof}

\section{Theta identities of arbitrary order}
\label{r: arbitrary order section}

Corollary~\ref{r: cubic character identity} gives a cubic character
relation on the root lattice \(A_2\).  We establish its same-order
analogue on \(A_{\ell-1}\) for every \(\ell\geq3\).  Here, a same-order
identity on \(A_{\ell-1}\) is a homogeneous relation among
\(\ell\)-th powers of theta series attached to that lattice.

\subsection{The general same-order identity}

The argument rests on a finite Fourier relation associated with a
Coxeter transformation of \(A_{\ell-1}\).

Fix \(\ell\geq2\) and write \(d=\ell-1\).  We use
\begin{equation}
\label{r: p data}
 M_\ell=
 \begin{pmatrix}
  2&-1&&&\\
  -1&2&-1&&\\
  &\ddots&\ddots&\ddots&\\
  &&-1&2&-1\\
  &&&-1&2
 \end{pmatrix}_{d\times d},
 \qquad
 \boldsymbol r=\begin{pmatrix}1&2&\cdots&d\end{pmatrix}^{T},
 \qquad
 \xi=\etp{1/\ell}.
\end{equation}
Write \(\boldsymbol1=(1,\ldots,1)^T\), with $d$ entries, and let \(L\) be the lower
triangular matrix whose entries on and below the diagonal are all equal to one.
We also use
\begin{equation}
\label{r: Coxeter matrix}
 R=
 \begin{pmatrix}
  0&0&\cdots&0&-1\\
  1&0&\cdots&0&-1\\
  0&1&\cdots&0&-1\\
  \vdots&&\ddots&&\vdots\\
  0&0&\cdots&1&-1
 \end{pmatrix}_{d\times d}.
\end{equation}
(If $\ell=2$, then $R=(-1)$.)
The following identities will be used repeatedly:
\begin{equation}
\label{r: Coxeter identities}
 \begin{gathered}
 L\boldsymbol1=\boldsymbol r,
 \qquad R^TM_\ell R=M_\ell,
 \qquad \lvert\det(R-I_d)\rvert=\ell,\\
 (R^{-1}-I_d)M_\ell^{-1}=-L,
 \qquad I_d-M_\ell L=R^T,
 \qquad M_\ell\boldsymbol r=\ell e_d,
 \qquad R^{-1}\boldsymbol r=\boldsymbol r-\ell e_d,\\
 \boldsymbol1^T(R^{-1}-I_d)=-\ell e_1^T,
 \qquad\boldsymbol1^T(R-I_d)=-\ell e_d^T.
 \end{gathered}
\end{equation}
They follow by direct multiplication.  In particular, the
integral matrix
\begin{equation}
\label{r: auxiliary integral matrix}
 E=\ell(R-I_d)^{-1},
 \qquad
 E_{ij}=
 \begin{cases}
  i-\ell,&j\leq i,\\
  i,&j>i,
 \end{cases}
\end{equation}
satisfies (we identify $\numZ^d$ with $\numZ^{d,1}$)
\begin{equation}
\label{r: auxiliary lattice identities}
 (R-I_d)\numZ^d
 =\{x\in\numZ^d:\boldsymbol1^Tx\equiv0\pmod\ell\},
 \qquad
 E^T\numZ^d=\ell\numZ^d+\numZ\boldsymbol1.
\end{equation}
Indeed,
\(\boldsymbol1^T(R-I_d)=-\ell e_d^T\) gives the inclusion in the
first equality of \eqref{r: auxiliary lattice identities}.  Both
lattices have index \(\ell\) in
\(\numZ^d\): this follows respectively from
\(\lvert\det(R-I_d)\rvert=\ell\) and from the surjectivity of
\(x\mapsto\boldsymbol1^Tx\pmod\ell\).  For the second equality,
the explicit formula in \eqref{r: auxiliary integral matrix}
shows that every row of \(E\) belongs to
\(\ell\numZ^d+\numZ\boldsymbol1\).  The two lattices again have
the same index, since
\[
 |\det E|=\ell^{d-1}
 \quad\text{and}\quad
 [\numZ^d:\ell\numZ^d+\numZ\boldsymbol1]=\ell^{d-1}.
\]

For \(n\geq1\), we define
\begin{equation}
\label{r: finite groups}
 \mathcal X_n=
 \{x\in(\numZ/\ell\numZ)^{d,n}:\boldsymbol1^Tx=0\},
 \qquad
 \mathcal Y_n=
 \frac{(\numZ/\ell\numZ)^{d,n}}
 {\{\boldsymbol1\boldsymbol\eta:
       \boldsymbol\eta\in(\numZ/\ell\numZ)^{1,n}\}}.
\end{equation}
The denominator in \(\mathcal Y_n\) consists of the matrices whose
rows are all equal.  The groups \(\mathcal X_n\) and \(\mathcal Y_n\)
are finite abelian groups, each of order
\(\ell^{(d-1)n}\).  Define
\begin{equation}
\label{r: finite pairing}
 \langle x,s\rangle=\xi^{\tr(x^Ts)}
\end{equation}
for $(x,s)\in\mathcal X_n\times\mathcal Y_n$.
\begin{lemm}
\label{lemm:XnYn}
The formula \eqref{r: finite pairing} gives a well-defined bicharacter on $\mathcal X_n\times\mathcal Y_n$. Moreover, if $x\neq0$ (which can occur only if $\ell\geq3$), then there exists $s\in\mathcal Y_n$ such that $\langle x,s\rangle\neq1$.
\end{lemm}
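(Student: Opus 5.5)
Well-definedness comes first. The pairing $\xi^{\tr(x^Ts)}$ makes sense for $x,s\in(\numZ/\ell\numZ)^{d,n}$ because $\xi^\ell=1$, so only the residues matter. We must also check that it does not depend on the representative of $s$ modulo the subgroup $\{\boldsymbol1\boldsymbol\eta\}$. Replacing $s$ by $s+\boldsymbol1\boldsymbol\eta$ multiplies the value by
\[
\xi^{\tr(x^T\boldsymbol1\boldsymbol\eta)}=\xi^{\tr((\boldsymbol1^Tx)^T\boldsymbol\eta)}.
\]
This equals $1$ because $\boldsymbol1^Tx=0$ for $x\in\mathcal X_n$. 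Biadditivity of $\tr(x^Ts)$ in each argument then shows that the pairing is a bicharacter.

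Nondegeneracy in $x$ is next. Suppose $x\in\mathcal X_n$ is nonzero, and pick an entry $x_{ij}\not\equiv0\pmod\ell$. Let $E_{ij}\in(\numZ/\ell\numZ)^{d,n}$ be the standard matrix unit and take $s$ to be its class in $\mathcal Y_n$. Then $\tr(x^TE_{ij})=x_{ij}$, so $\langle x,s\rangle=\xi^{x_{ij}}\neq1$ because $\xi$ is a primitive $\ell$-th root of unity. The membership conditions on $x$ and $s$ cause no difficulty here: $\mathcal Y_n$ is a quotient of the full matrix group, so any matrix unit defines an element of it.

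The parenthetical claim should also be justified. If $\ell=2$, then $d=1$, and $\boldsymbol1^Tx=x=0$ forces $\mathcal X_n=0$, so $x\neq0$ can occur only if $\ell\geq3$. There is no real obstacle in this lemma. The only point needing care is the well-definedness check on the quotient, which uses exactly the defining constraint of $\mathcal X_n$.
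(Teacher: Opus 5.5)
Your proposal is correct and follows the paper's proof: well-definedness on the quotient comes from $\boldsymbol1^Tx=0$, and nondegeneracy from choosing some $y$ with $\tr(x^Ty)\not\equiv0\pmod\ell$. You make that choice explicit with a matrix unit, and you also verify the parenthetical remark about $\ell=2$, which the paper leaves unchecked.
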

\begin{proof}
For $x\in\mathcal X_n$ and $y\in(\numZ/\ell\numZ)^{d,n}$, define $\langle x,y\rangle_0=\xi^{\tr(x^Ty)}$; this is a well-defined bicharacter. For $s=y+\{\boldsymbol1\boldsymbol\eta:\boldsymbol\eta\in(\numZ/\ell\numZ)^{1,n}\}\in\mathcal Y_n$, the definition gives $\langle x,s\rangle=\langle x,y\rangle_0$. If $y'$ is another representative of $s$, then $y-y'=\boldsymbol1\boldsymbol\eta$ for some $\boldsymbol\eta\in(\numZ/\ell\numZ)^{1,n}$. Hence
\begin{equation*}
\langle x,y-y'\rangle_0=\xi^{\tr(x^T(y-y'))}=\xi^{\tr(x^T(\boldsymbol1\boldsymbol\eta))}=\xi^{\tr((x^T\boldsymbol1)\cdot\boldsymbol\eta)}=1,
\end{equation*}
since \(\boldsymbol1^Tx=0\) in
\((\numZ/\ell\numZ)^{1,n}\) by the definition of \(\mathcal X_n\).
Thus $\langle x,s\rangle$ is independent of the representative of $s$, so \eqref{r: finite pairing} is well defined on $\mathcal X_n\times\mathcal Y_n$. For the second assertion, choose $y\in(\numZ/\ell\numZ)^{d,n}$ such that $\tr(x^Ty)\not\equiv0\pmod{\ell}$; such a choice is possible because $x\neq0$. Then $\langle x,y\rangle_0\neq1$, and hence $\langle x,s\rangle\neq1$ for $s=y+\{\boldsymbol1\boldsymbol\eta:\boldsymbol\eta\in(\numZ/\ell\numZ)^{1,n}\}$.
\end{proof}


We need the following character projection relation.
\begin{lemm}
\label{r: character orthogonality}
Suppose that a family \(F_{x,s}\) is indexed by
\(x\in(\numZ/\ell\numZ)^{d,n}\) and
\(s\in\mathcal Y_n\), with \(\mathcal X_n\) and \(\mathcal Y_n\) as in
\eqref{r: finite groups}, and satisfies
\begin{equation}
\label{r: abstract Fourier relation}
 \ell^{(d-1)n}F_{R^{-1}x,t}
 =\sum_{\substack{y\in x+\mathcal X_n\\s\in\mathcal Y_n}}
 \langle y-x,s-t\rangle F_{y,s},
\end{equation}
for every \(x\in(\numZ/\ell\numZ)^{d,n}\) and
\(t\in\mathcal Y_n\).  Let
\(\boldsymbol\lambda,\boldsymbol\kappa
\in(\numZ/\ell\numZ)^{1,n}\) satisfy\footnote{If \eqref{r: admissible characters} holds, then necessarily $\ell\geq3$.}
\begin{equation}
\label{r: admissible characters}
 \boldsymbol r\boldsymbol\kappa\in\mathcal X_n,
 \qquad
 \boldsymbol\lambda\boldsymbol\kappa^T\neq0.
\end{equation}
Then
\begin{equation}
\label{r: character sum}
 \sum_{\substack{\boldsymbol a\in(\numZ/\ell\numZ)^{1,n}\\
                  t\in\mathcal Y_n}}
 \xi^{\boldsymbol\lambda\boldsymbol a^T}
 \langle\boldsymbol r\boldsymbol\kappa,t\rangle
 F_{\boldsymbol r\boldsymbol a,t}=0.
\end{equation}
\end{lemm}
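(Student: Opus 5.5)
The plan is to mimic the Fourier-inversion argument of Corollary~\ref{r: cubic character identity}. The difference is that \eqref{r: abstract Fourier relation} has the family \(F\) on both sides. So instead of isolating a vanishing theta constant, I will show that the character sum \(S\) on the left of \eqref{r: character sum} satisfies \(S=\xi^{-\boldsymbol\lambda\boldsymbol\kappa^T}S\). Since \(\boldsymbol\lambda\boldsymbol\kappa^T\neq0\), we have \(\xi^{-\boldsymbol\lambda\boldsymbol\kappa^T}\neq1\), and this forces \(S=0\).

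\emph{Step 1: specialize the relation.} Put \(x=\boldsymbol r\boldsymbol a\). By \eqref{r: Coxeter identities}, \(R^{-1}\boldsymbol r=\boldsymbol r-\ell e_d\), so modulo \(\ell\) we get \(R^{-1}\boldsymbol r\boldsymbol a\equiv\boldsymbol r\boldsymbol a\). Hence the left side of \eqref{r: abstract Fourier relation} is \(\ell^{(d-1)n}F_{\boldsymbol r\boldsymbol a,t}\). I multiply by \(\xi^{\boldsymbol\lambda\boldsymbol a^T}\langle\boldsymbol r\boldsymbol\kappa,t\rangle\) and sum over \(\boldsymbol a\) and \(t\in\mathcal Y_n\). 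The left side becomes \(\ell^{(d-1)n}S\). On the right, I write \(y=\boldsymbol r\boldsymbol a+x'\) with \(x'\in\mathcal X_n\). Using bilinearity, \(\langle x',s-t\rangle\langle\boldsymbol r\boldsymbol\kappa,t\rangle=\langle x',s\rangle\langle\boldsymbol r\boldsymbol\kappa-x',t\rangle\); this is legitimate because \(\boldsymbol r\boldsymbol\kappa\in\mathcal X_n\) by \eqref{r: admissible characters}. I then interchange the finite sums so that the sum over \(t\) is innermost.

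\emph{Step 2: orthogonality in \(t\).} The inner sum is \(\sum_{t\in\mathcal Y_n}\langle\boldsymbol r\boldsymbol\kappa-x',t\rangle\). By Lemma~\ref{lemm:XnYn}, \(t\mapsto\langle z,t\rangle\) is a character of \(\mathcal Y_n\), and it is nontrivial whenever \(z\in\mathcal X_n\) is nonzero. So the inner sum equals \(\abs{\mathcal Y_n}=\ell^{(d-1)n}\) if \(x'=\boldsymbol r\boldsymbol\kappa\), and \(0\) otherwise. Only \(y=\boldsymbol r(\boldsymbol a+\boldsymbol\kappa)\) survives, and the right side becomes
\[
\ell^{(d-1)n}\sum_{\boldsymbol a,s}\xi^{\boldsymbol\lambda\boldsymbol a^T}\langle\boldsymbol r\boldsymbol\kappa,s\rangle F_{\boldsymbol r(\boldsymbol a+\boldsymbol\kappa),s}.
\]

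\emph{Step 3: shift and conclude.} I substitute \(\boldsymbol a'=\boldsymbol a+\boldsymbol\kappa\), which permutes \((\numZ/\ell\numZ)^{1,n}\). The right side then equals \(\ell^{(d-1)n}\xi^{-\boldsymbol\lambda\boldsymbol\kappa^T}S\). Cancelling \(\ell^{(d-1)n}\) gives \(S=\xi^{-\boldsymbol\lambda\boldsymbol\kappa^T}S\), hence \(S=0\).

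The main point needing care is Step 2. It requires that the relevant difference lies in \(\mathcal X_n\), so that the pairing with \(\mathcal Y_n\) is well defined; this is exactly where the hypothesis \(\boldsymbol r\boldsymbol\kappa\in\mathcal X_n\) enters. It also requires the nondegeneracy statement of Lemma~\ref{lemm:XnYn}, which is what makes the orthogonality relation valid. The remaining steps are routine bookkeeping of finite sums.
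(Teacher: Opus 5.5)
Your proposal is correct and follows the paper's proof essentially step for step. Like the paper, you specialize at \(x=\boldsymbol r\boldsymbol a\) using \(R^{-1}\boldsymbol r=\boldsymbol r-\ell e_d\), collapse the \(t\)-sum by orthogonality on \(\mathcal Y_n\) (via Lemma~\ref{lemm:XnYn}) to the single surviving term \(y=\boldsymbol r(\boldsymbol a+\boldsymbol\kappa)\), and shift \(\boldsymbol a\) to obtain \(S=\xi^{-\boldsymbol\lambda\boldsymbol\kappa^T}S\). The only cosmetic difference is that you write \(y=\boldsymbol r\boldsymbol a+x'\) explicitly before applying bilinearity, which the paper does implicitly.
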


\begin{proof}
Since \(R^{-1}\boldsymbol r=\boldsymbol r-\ell e_d\), we have
\(F_{R^{-1}\boldsymbol r\boldsymbol a,t}
=F_{\boldsymbol r\boldsymbol a,t}\).  We multiply
\eqref{r: abstract Fourier relation}, with
\(x=\boldsymbol r\boldsymbol a\), by
\(\xi^{\boldsymbol\lambda\boldsymbol a^T}
\langle\boldsymbol r\boldsymbol\kappa,t\rangle\) and sum over
\(\boldsymbol a\) and \(t\).  The second conclusion of Lemma~\ref{lemm:XnYn} and orthogonality on \(\mathcal Y_n\)
give
\[
 \sum_{t\in\mathcal Y_n}
 \langle\boldsymbol r\boldsymbol\kappa,t\rangle\langle y-\boldsymbol r\boldsymbol a,-t\rangle=
 \sum_{t\in\mathcal Y_n}
 \langle\boldsymbol r\boldsymbol\kappa-
 (y-\boldsymbol r\boldsymbol a),t\rangle
 =\begin{cases}
   \ell^{(d-1)n},&
   y-\boldsymbol r\boldsymbol a
      =\boldsymbol r\boldsymbol\kappa,\\
   0,&\text{otherwise}.
  \end{cases}
\]
The value \(y=\boldsymbol r(\boldsymbol a+\boldsymbol\kappa)\)
lies in \(\boldsymbol r\boldsymbol a+\mathcal X_n\) by
\eqref{r: admissible characters}.  Writing \(S\) for the sum in
\eqref{r: character sum}, the left-hand side of the summed
Fourier relation is \(\ell^{(d-1)n}S\), whereas its right-hand side
is
\[
 \ell^{(d-1)n}
 \sum_{\boldsymbol a,s}
 \xi^{\boldsymbol\lambda\boldsymbol a^T}
 \langle\boldsymbol r\boldsymbol\kappa,s\rangle
 F_{\boldsymbol r(\boldsymbol a+\boldsymbol\kappa),s}
 =\ell^{(d-1)n}
  \xi^{-\boldsymbol\lambda\boldsymbol\kappa^T}S.
\]
Since
\(\xi^{-\boldsymbol\lambda\boldsymbol\kappa^T}\ne1\), it
follows that \(S=0\).
\end{proof}

Since \(\boldsymbol1^T\boldsymbol r=\ell(\ell-1)/2\), the first
condition in \eqref{r: admissible characters} is automatic when
\(\ell\) is odd; when \(\ell\) is even, it requires every component
of \(\boldsymbol\kappa\) to lie in \(2\numZ/\ell\numZ\).  In the latter case, replacing
\(\boldsymbol\lambda\) by
\(\boldsymbol\lambda+(\ell/2)\boldsymbol\varepsilon\), with
\(\boldsymbol\varepsilon\in\{0,1\}^{1,n}\), preserves the second
condition in \eqref{r: admissible characters}.  This observation
gives the parity refinement in the following theorem, which is the main result of this section.
We identify $(\numZ/\ell\numZ)^{1,n}$ with $(\numZ/\ell\numZ)^n$,
so $\boldsymbol\lambda\cdot\boldsymbol\kappa=\boldsymbol\lambda\boldsymbol\kappa^T$.

\begin{thm}
\label{r: unified p power identity}
Let \(\ell\geq3\), \(n\geq1\), \(\tau\in\mathfrak H_n\), and
\(z\in\numC^{d,n}\), where $d=\ell-1$.  For
\(\boldsymbol\lambda,\boldsymbol\kappa
\in(\numZ/\ell\numZ)^n\), assume that
\(\boldsymbol\lambda\cdot\boldsymbol\kappa\not\equiv0
\pmod\ell\).  If \(\ell\) is even, assume in addition that every
component of \(\boldsymbol\kappa\) lies in
\(2\numZ/\ell\numZ\).  Then
\begin{equation}
\label{r: unified p power formula}
 \sum_{\substack{\boldsymbol a,\boldsymbol b_1,\ldots,
                  \boldsymbol b_{d-1}
                  \in(\numZ/\ell\numZ)^n}}
 \xi^{\boldsymbol\lambda\cdot\boldsymbol a+
       \boldsymbol\kappa\cdot
       (\boldsymbol b_1+\cdots+\boldsymbol b_{d-1})}
 \Theta_{M_\ell,n}^{\,\ell}
 \!\left[\frac{\boldsymbol r\boldsymbol a}{\ell},
          \frac{B}{\ell}\right](\tau,z)=0.
\end{equation}
Here
\(B=[\boldsymbol b_1,\ldots,\boldsymbol b_{d-1},\boldsymbol0]\).
Moreover, if \(\ell\) is even, then for every $\boldsymbol a_0\in(\numZ/\ell\numZ)^n$ we have
\begin{equation}
\label{r: unified p power formula 2}
 \sum_{\substack{{\boldsymbol b_1,\ldots,
                  \boldsymbol b_{d-1}
                  \in(\numZ/\ell\numZ)^n}\\
                   {\boldsymbol a\in\boldsymbol a_0+(2\numZ/\ell\numZ)^n}}}
 \xi^{\boldsymbol\lambda\cdot\boldsymbol a+
       \boldsymbol\kappa\cdot
       (\boldsymbol b_1+\cdots+\boldsymbol b_{d-1})}
 \Theta_{M_\ell,n}^{\,\ell}
 \!\left[\frac{\boldsymbol r\boldsymbol a}{\ell},
          \frac{B}{\ell}\right](\tau,z)=0.
\end{equation}
\end{thm}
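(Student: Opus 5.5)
The plan is to apply Lemma~\ref{r: character orthogonality} to the family
\[
F_{x,s}:=\Theta_{M_\ell,n}\!\left[\frac{x}{\ell},\frac{\widehat s}{\ell}\right](\tau,z)^{\ell},
\]
where $\widehat s$ is an integral representative of $s\in\mathcal Y_n$. The first task is to check that $F$ is well defined. Changing the representative of $x$ modulo $\ell$ shifts the first characteristic by an integral matrix. Adding $\boldsymbol1\boldsymbol\eta$ to $\widehat s$ shifts $\widehat s/\ell$ by $\boldsymbol1\boldsymbol\eta/\ell$, and I expect this shift to lie in $M_\ell^{-1}\numZ^{d,n}$ up to a change that only contributes an $\ell$-th root of unity. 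Indeed, $M_\ell^{-1}$ has denominators dividing $\ell$, and $M_\ell\boldsymbol1=e_1+e_d$, so the relevant phase has order dividing $\ell$. Lemma~\ref{lemm:twochar}, raised to the $\ell$-th power, then shows $F$ depends only on the classes.

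Every $s\in\mathcal Y_n$ has a unique representative with last row zero, namely $B=[\boldsymbol b_1,\dots,\boldsymbol b_{d-1},\boldsymbol0]$. For this representative,
\[
\langle\boldsymbol r\boldsymbol\kappa,t\rangle=\xi^{\boldsymbol\kappa\cdot\sum_i i\,\boldsymbol b_i}.
\]
So the character in \eqref{r: character sum} is not literally the one in \eqref{r: unified p power formula}. I would therefore either reparametrize $B$ (the map $\boldsymbol b_i\mapsto$ weighted combinations is invertible modulo $\ell$), or more simply replace $\boldsymbol r\boldsymbol\kappa$ by $e\boldsymbol\kappa$ for a suitable vector $e$ in Lemma~\ref{r: character orthogonality}. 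Its proof only needs the shift $\boldsymbol r\boldsymbol a\mapsto\boldsymbol r(\boldsymbol a+\boldsymbol\kappa)$, so I will track which vector is needed. With the final choice, \eqref{r: character sum} should become \eqref{r: unified p power formula}.

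The heart of the proof is the Fourier relation \eqref{r: abstract Fourier relation}. For this I apply Theorem~\ref{thm:main} with $m=d$, $\mu=\ell$, $M_1=M_2=M_\ell$, and all $a_i,b_i,z_i$ equal to $x/\ell$, $t/\ell$, $z$. For $A$ I take an orthogonal transformation of $M_\ell^{(\ell)}$ built from the Coxeter element $R$, analogous to the $6\times6$ matrix used in the cubic case. Concretely, $A^{-1}$ should be the block finite Fourier-type matrix $\frac1\ell\bigl(\text{something in }R^{ij}\bigr)$ that sends $[V,\dots,V]$ to $[R'V,0,\dots,0]$ with $R'$ related to $R^{-1}$, generalizing $C$. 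Orthogonality should follow from $R^TM_\ell R=M_\ell$ and $\sum_j R^j=0$, and the identities \eqref{r: Coxeter identities} together with \eqref{r: auxiliary lattice identities} (via $E$) determine $\widetilde A$. I then need to show that the $\alpha$-quotient is parametrized by $y-x\in\mathcal X_n$ and the $\beta$-quotient by $s\in\mathcal Y_n$, each of order $\ell^{(d-1)n}$; this matches $\abs{\det\widetilde A}^n$. The phase $\etp{-\tr(a'^TM\beta)}$, combined with Lemma~\ref{lemm:twochar}, should produce $\langle y-x,s-t\rangle$, and the left side becomes $F_{R^{-1}x,t}$ after moving $R^{-1}$ onto the characteristic via invariance of $M_\ell$ under $R$. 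The right side has one factor at $w=R'z$ and $\ell-1$ theta constants, so I must still turn this product back into an $\ell$-th power. This is the main obstacle. My first attempt will be to take $A$ so that all $\ell$ transformed variables coincide; if that fails, I will instead work with the general Fourier relation for products, as in the cubic proof, and obtain the vanishing through the orthogonality argument applied to the product family.

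Finally, for even $\ell$, note that $\boldsymbol\lambda$ and $\boldsymbol\lambda+(\ell/2)\boldsymbol\varepsilon$ both satisfy the hypotheses. Averaging \eqref{r: unified p power formula} over $\boldsymbol\varepsilon\in\{0,1\}^n$ with weights $(-1)^{\boldsymbol\varepsilon\cdot\boldsymbol a_0}$ projects onto $\boldsymbol a\equiv\boldsymbol a_0\pmod 2$, which gives \eqref{r: unified p power formula 2}.
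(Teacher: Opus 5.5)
Your overall skeleton is the paper's: an $\ell$-th-power family satisfying \eqref{r: abstract Fourier relation}, then Lemma~\ref{r: character orthogonality}, then the parity averaging. However, two essential pieces are wrong or missing.

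\textbf{The family $F$ is not well defined.} Adding $\boldsymbol1\boldsymbol\eta$ to $\widehat s$ moves the second characteristic by $\boldsymbol1\boldsymbol\eta/\ell$. Since $M_\ell\boldsymbol1\boldsymbol\eta/\ell=(e_1+e_d)\boldsymbol\eta/\ell$ is not integral unless $\boldsymbol\eta\equiv0$, Lemma~\ref{lemm:twochar} does not apply. Each summand picks up the $g$-dependent factor $\xi^{\tr((g+x/\ell)^T(e_1+e_d)\boldsymbol\eta)}$, so you get a genuinely different series. For $\ell=3$ your definition would identify $\theta_{M_3,1}[0,0]^3=a(q)^3$ with $\theta_{M_3,1}[0,\boldsymbol1/3]^3=b(q)^3$.

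The paper uses the characteristic $\gamma_s=-Ls/\ell$ instead. Because $L\boldsymbol1=\boldsymbol r$ and $M_\ell\boldsymbol r=\ell e_d$, the shift becomes $-\boldsymbol r\boldsymbol\eta/\ell=M_\ell^{-1}(-e_d\boldsymbol\eta)$, which costs only an $\ell$-th root of unity. This choice also removes the character mismatch you noticed. If you normalize $B=-Lt$ to have last row zero, then $\boldsymbol r^Tt=-\boldsymbol r^TL^{-1}B\equiv\boldsymbol1^TB\pmod\ell$, so $\langle\boldsymbol r\boldsymbol\kappa,t\rangle=\xi^{\boldsymbol\kappa\cdot(\boldsymbol b_1+\cdots+\boldsymbol b_{d-1})}$ exactly.

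Your alternative of replacing $\boldsymbol r\boldsymbol\kappa$ by some $e\boldsymbol\kappa$ would not help. In Lemma~\ref{r: character orthogonality} that vector must also be the translation carrying $\boldsymbol r\boldsymbol a$ to another $\boldsymbol r\boldsymbol a'$, so it must lie in $\boldsymbol r(\numZ/\ell\numZ)^{1,n}$. The character is then still of the form $\langle\boldsymbol r\boldsymbol\kappa',t\rangle$.

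\textbf{The central step is left open.} A Fourier-type $A$ with $A^{-1}[V,\ldots,V]=[R'V,0,\ldots,0]$, as in the cubic case, gives on the right products of one series at $R'z$ with $\ell-1$ theta constants. These are not members of your family, so Lemma~\ref{r: character orthogonality} cannot be used. The cubic-style fallback would also need a vanishing theorem for $A_{\ell-1}$ theta constants (an analogue of Lemma~\ref{lemm:cubic-vanishing-symmetry}) and an explicit $\widetilde A$; you supply neither.

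Your ``first attempt'' is the right idea, and the paper carries it out with a matrix that mixes nothing: $A=I_{d\ell}+\frac1\ell J_\ell\otimes(R-I_d)$. It acts as $R$ on the diagonal $[V,\ldots,V]$ and as the identity on its $M_\ell^{(\ell)}$-orthogonal complement $\{[v_1,\ldots,v_\ell]:\sum_i v_i=0\}$, so it is an isometry.
\begin{enumerate}
\item Take $a_i=x/\ell$, $b_i=R\gamma_t$, $z_i=Rz$. Then $a_i'=R^{-1}x/\ell$, $b_i'=\gamma_t$ and $w_i=z$ for all $i$, and the left side is $F_{R^{-1}x,t}$.
\item Build $\widetilde A$ from $E=\ell(R-I_d)^{-1}$. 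The $\alpha$-representatives are $[v/\ell,\ldots,v/\ell]$ with $v\in\mathcal X_n$, constant across blocks; write $y=R^{-1}x+v$.
\item The $\beta$-representatives are $[\gamma_u,\ldots,\gamma_u,\gamma_u+M_\ell^{-1}u]$ with $u\in\mathcal Y_n$; write $s=t+u$. Lemma~\ref{lemm:twochar} removes the extra $M_\ell^{-1}u$ in the last block at the cost of $\xi^{\tr(y^Tu)}$.
\item The exterior phase is $\xi^{-\tr(x^Tu)}$, by $I_d-M_\ell L=R^T$.
\end{enumerate}
Every summand is therefore $\langle y-x,s-t\rangle F_{y,s}$, which is exactly \eqref{r: abstract Fourier relation}. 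Your parity refinement then goes through as written.
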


\begin{proof}
Throughout the proof, we use the notation introduced at the beginning
of this subsection and the properties in \eqref{r: Coxeter identities}.
For \(x\in(\numZ/\ell\numZ)^{d,n}\) and
\(s\in\mathcal Y_n\), we choose integral lifts and define
\[
 \gamma_s=-\frac{Ls}{\ell},
 \qquad
 F_{x,s}=\Theta_{M_\ell,n}^{\,\ell}
 \!\left[\frac{x}{\ell},\gamma_s\right](\tau,z).
\]
Changing an integral lift of \(x\) by \(\ell h\) shifts the first
characteristic by \(h\) and does not change the theta series.  A
change \(s\mapsto s+\ell k\) changes \(\gamma_s\) by \(-Lk\);
by Lemma~\ref{lemm:twochar}, the resulting factor before taking the
\(\ell\)-th power is
\[
 \xi^{-\tr(x^TM_\ell Lk)}.
\]
Replacing \(s\) by
\(s+\boldsymbol1\boldsymbol\eta\) changes \(\gamma_s\) by
\(-\boldsymbol r\boldsymbol\eta/\ell
=M_\ell^{-1}(-e_d\boldsymbol\eta)\), and the corresponding factor
is
\(\xi^{-\tr(x^Te_d\boldsymbol\eta)}\).  Both factors disappear
after taking the \(\ell\)-th power.  Thus \(F_{x,s}\) is well
defined (independent of the integral lifts chosen for \(x\) and \(s\)).

We apply Theorem~\ref{thm:main} with
\[
 m=d,\qquad \mu=\ell,\qquad M_1=M_2=M_\ell,\qquad
 A=I_{d\ell}+\frac1\ell J_\ell\otimes(R-I_d),
\]
where \(J_\ell\) is the \(\ell\times\ell\) all-ones matrix.  A direct calculation shows that
\(A^TM_\ell^{(\ell)}A=M_\ell^{(\ell)}\).
The integral matrix \(\widetilde{A}\) is defined by
\[
 \widetilde{A}\cdot(x_1,\ldots,x_\ell)^T
 =(x_1-x_\ell,\ldots,x_{\ell-1}-x_\ell,x_\ell E^T)^T,\qquad
 \text{for all } x_1,\dots,x_\ell\in\numZ^{1,d}.
\]
Since \(E^{-1}=(R-I_d)/\ell\), reduction modulo
\(\numZ^{d\ell,n}\) gives
\begin{equation*}
 \widetilde{A}^{-1}\numZ^{d\ell,n}
 =\numZ^{d\ell,n}
 +\left\{\left[\frac{(R-I_d)u}{\ell},\ldots,
                    \frac{(R-I_d)u}{\ell}\right]:
              u\in\numZ^{d,n}\right\}
 =\numZ^{d\ell,n}+A\numZ^{d\ell,n}.
\end{equation*}
All the hypotheses of Theorem~\ref{thm:main} are therefore satisfied, and
\(|\det \widetilde{A}|=|\det E|=\ell^{d-1}\).

The two quotient groups in Theorem~\ref{thm:main} are parametrized
by \(\mathcal X_n\) and \(\mathcal Y_n\), respectively.  More
precisely, the first parametrization is the isomorphism
\[
 \mathcal X_n\longrightarrow
 \frac{(\widetilde{A}A)^{-1}\numZ^{d\ell,n}}{\numZ^{d\ell,n}},
 \qquad
 v\longmapsto
 \left[\frac v\ell,\ldots,\frac v\ell\right]
       +\numZ^{d\ell,n}.
\]
Changing the lift of \(v\) changes the displayed representative by
an integral matrix.  Moreover,
\[
 \widetilde{A}A\left[\frac v\ell,\ldots,\frac v\ell\right]
 =\widetilde{A}\left[\frac{Rv}{\ell},\ldots,\frac{Rv}{\ell}\right]
 =\left[0,\ldots,0,(R-I_d)^{-1}Rv\right],
\]
which is integral by the first equality in
\eqref{r: auxiliary lattice identities}.  Thus the map is well defined and is a homomorphism.
Its kernel is trivial, and the target has order
\(\abs{\det(\widetilde{A}A)}^n=\ell^{(d-1)n}=\abs{\mathcal X_n}\); hence the map is an isomorphism.

For the second quotient, define
\[
 \beta(u)=[\gamma_u,\ldots,\gamma_u,
             \gamma_u+M_\ell^{-1}u].
\]
Then
\[
 M_\ell^{(\ell)}A\beta(u)=[0,\ldots,0,u].
\]
Since
\([0,\ldots,0,u]\in \widetilde{A}^T\numZ^{d\ell,n}\) if and only if
\(u\in E^T\numZ^{d,n}\), the second equality in
\eqref{r: auxiliary lattice identities} shows that the class of
\(\beta(u)\) depends only on the class of \(u\) in \(\mathcal Y_n\)
and that
\[
 \mathcal Y_n\longrightarrow
 \frac{A^{-1}{M_\ell^{(\ell)}}^{-1}\numZ^{d\ell,n}}
 {A^{-1}{M_\ell^{(\ell)}}^{-1}\widetilde{A}^T\numZ^{d\ell,n}},
 \qquad s\longmapsto\beta(s)+A^{-1}{M_\ell^{(\ell)}}^{-1}\widetilde{A}^T\numZ^{d\ell,n}
\]
is injective.  Its target has order
\([\numZ^{d\ell,n}:\widetilde{A}^T\numZ^{d\ell,n}]
=\ell^{(d-1)n}=\abs{\mathcal Y_n}\), so this map is an isomorphism.

For fixed \(x\in(\numZ/\ell\numZ)^{d,n}\) and
\(t\in\mathcal Y_n\), we specialize Theorem~\ref{thm:main} by
taking
\[
 a_i=\frac{x}{\ell},\qquad
 b_i=R\gamma_t,\qquad z_i=Rz
 \qquad(1\leq i\leq\ell).
\]
Then
\(a_i'=R^{-1}x/\ell\), \(b_i'=\gamma_t\), and \(w_i=z\).
The identity
\[
 \Theta_{M_\ell,n}[u,Rv](\tau,Rz)
 =\Theta_{M_\ell,n}[R^{-1}u,v](\tau,z)
\]
shows that the product on the left-hand side of
\eqref{eq:mainIdentity} is \(F_{R^{-1}x,t}\).
Writing \(y=R^{-1}x+v\) and \(s=t+u\), the preceding
isomorphisms give the following representatives for the two
quotients in \eqref{eq:alphaset} and \eqref{eq:betaset}:
\[
 \left[\frac{y-R^{-1}x}{\ell},\ldots,
       \frac{y-R^{-1}x}{\ell}\right],
 \qquad y\in R^{-1}x+\mathcal X_n,
\]
and
\[
 [\gamma_u,\ldots,\gamma_u,
   \gamma_u+M_\ell^{-1}u],
 \qquad u\in\mathcal Y_n.
\]

The identity \(I_d-M_\ell L=R^T\) gives
\[
 M_\ell(\ell\gamma_u+M_\ell^{-1}u)=R^Tu.
\]
The exterior factor on the right-hand side of \eqref{eq:mainIdentity} is therefore
\[
\etp{-\tr\left(\sum_{i=1}^{\ell}a_i'^TM_\ell\beta_i\right)}=\xi^{-\tr(x^Tu)},
\]
whereas removing \(M_\ell^{-1}u\) from the
second characteristic in the last theta factor contributes
\(\xi^{\tr(y^Tu)}\).
For each pair \((y,s)\), the corresponding summand
on the right-hand side of \eqref{eq:mainIdentity} is therefore
\[
 \xi^{-\tr(x^Tu)}
 \xi^{\tr(y^Tu)}
 \Theta_{M_\ell,n}^{\,\ell}
 \!\left[\frac y\ell,\gamma_s\right](\tau,z)
 =\langle y-x,s-t\rangle F_{y,s}.
\]
Consequently, Theorem~\ref{thm:main} gives
\[
 \ell^{(d-1)n}F_{R^{-1}x,t}
 =\sum_{\substack{y\in R^{-1}x+\mathcal X_n\\s\in\mathcal Y_n}}
 \langle y-x,s-t\rangle F_{y,s}.
\]
Since
\(\boldsymbol1^T(R^{-1}-I_d)=-\ell e_1^T\), we have
\((R^{-1}-I_d)x\in\mathcal X_n\); hence the affine set in the last
sum is \(x+\mathcal X_n\).  We have therefore proved
\begin{equation}
\label{r: common finite Fourier relation}
 \ell^{(d-1)n}F_{R^{-1}x,t}
 =\sum_{\substack{y\in x+\mathcal X_n\\s\in\mathcal Y_n}}
 \langle y-x,s-t\rangle F_{y,s},
\end{equation}
which is exactly the relation required in \eqref{r: abstract Fourier relation}.
The two conditions in \eqref{r: admissible characters}
are also satisfied: the first by the argument in the paragraph preceding Theorem~\ref{r: unified p power identity},
and the second by assumption.  Lemma~\ref{r: character orthogonality} therefore yields \eqref{r: character sum} for
$F_{x,s}=\Theta_{M_\ell,n}^{\ell}\left[\frac{x}{\ell},\gamma_s\right](\tau,z)$.

It remains to express the character projection in the stated
coordinates.  Replacing \(s\) by
\(s+\boldsymbol1\boldsymbol\eta\) changes \(B=-Ls\) by
\(-\boldsymbol r\boldsymbol\eta\), and hence changes its last row by
\(-d\boldsymbol\eta\).  Since \(d\equiv-1\pmod\ell\), every class
\(s\in\mathcal Y_n\) has a unique representative for which the last
row of \(B\) is zero.  With this representative,
\begin{equation}
\label{r: B representative}
 \gamma_s=\frac B\ell,
 \qquad
 \boldsymbol r^Ts=-\boldsymbol r^TL^{-1}B
 \equiv\boldsymbol1^TB\pmod\ell.
\end{equation}
The result now follows by combining
\eqref{r: B representative} with \eqref{r: character sum}.

When \(\ell\) is even, applying the formula with
\(\boldsymbol\lambda+(\ell/2)\boldsymbol\varepsilon\) for every
\(\boldsymbol\varepsilon\in\{0,1\}^n\) is permissible because the
components of \(\boldsymbol\kappa\) are even.  Denote the sum in \eqref{r: unified p power formula} by $S_{\boldsymbol\lambda,\boldsymbol\kappa}$.
Then $S_{\boldsymbol\lambda+(\ell/2)\boldsymbol\varepsilon,\boldsymbol\kappa}=0$ for \(\boldsymbol\varepsilon\in\{0,1\}^n\).
It follows that
\begin{align*}
0=&\sum_{\boldsymbol\varepsilon\in(\numZ/2\numZ)^n}(-1)^{-\boldsymbol a_0\cdot\boldsymbol\varepsilon}S_{\boldsymbol\lambda+(\ell/2)\boldsymbol\varepsilon,\boldsymbol\kappa}\\
=&\sum_{\substack{{\boldsymbol\varepsilon\in(\numZ/2\numZ)^n}\\{\boldsymbol a\in(\numZ/\ell\numZ)^n}}}(-1)^{(\boldsymbol a-\boldsymbol a_0)\cdot\boldsymbol\varepsilon}\cdot\sum_{\substack{{\boldsymbol b_1,\ldots,
                  \boldsymbol b_{d-1}
                  \in(\numZ/\ell\numZ)^n}\\
                   }}
 \xi^{\boldsymbol\lambda\cdot\boldsymbol a+
       \boldsymbol\kappa\cdot
       (\boldsymbol b_1+\cdots+\boldsymbol b_{d-1})}
 \Theta_{M_\ell,n}^{\,\ell}
 \!\left[\frac{\boldsymbol r\boldsymbol a}{\ell},
          \frac{B}{\ell}\right](\tau,z)\\
  &=2^n\cdot\sum_{\substack{{\boldsymbol a-\boldsymbol a_0\in(2\numZ/\ell\numZ)^n}}}\sum_{\substack{{\boldsymbol b_1,\ldots,
                  \boldsymbol b_{d-1}
                  \in(\numZ/\ell\numZ)^n}\\
                   }}
 \xi^{\boldsymbol\lambda\cdot\boldsymbol a+
       \boldsymbol\kappa\cdot
       (\boldsymbol b_1+\cdots+\boldsymbol b_{d-1})}
 \Theta_{M_\ell,n}^{\,\ell}
 \!\left[\frac{\boldsymbol r\boldsymbol a}{\ell},
          \frac{B}{\ell}\right](\tau,z),
\end{align*}
which proves the parity refinement.

\end{proof}

\subsection{The specialization \texorpdfstring{$\ell=3$}{ell=3}}

For \(\ell=3\), the data in \eqref{r: p data} agree with the
cubic data in \eqref{r: cubic basic data}: we have \(d=2\),
\(M_\ell=M\),
\(\xi=\omega\), and \(\boldsymbol r=(1,2)^T\).  Write
\(B=[\boldsymbol b,\boldsymbol0]\).  Since
\[
 \frac B3-\sigma(-\boldsymbol b)
 =M^{-1}\begin{pmatrix}\boldsymbol b\\\boldsymbol0\end{pmatrix},
 \qquad
 \frac{\boldsymbol r\boldsymbol a}{3}
 \equiv\rho\boldsymbol a\pmod{\numZ^{2,n}},
\]
Lemma~\ref{lemm:twochar} gives
\begin{equation}
\label{r: ell three characteristic comparison}
 \Theta_{M,n}
 \!\left[\frac{\boldsymbol r\boldsymbol a}{3},
          \frac B3\right](\tau,z)
 =\omega^{\boldsymbol a\cdot\boldsymbol b}
   \Theta^{(n)}_{\boldsymbol a,-\boldsymbol b}(\tau,z).
\end{equation}
The phase in \eqref{r: ell three characteristic comparison}
disappears after cubing.  Consequently,
\eqref{r: unified p power formula} becomes
\[
 \sum_{\boldsymbol a,\boldsymbol b\in\F_3^n}
 \omega^{\boldsymbol\lambda\cdot\boldsymbol a+
          \boldsymbol\kappa\cdot\boldsymbol b}
 \Theta^{(n)}_{\boldsymbol a,-\boldsymbol b}(\tau,z)^3=0.
\]
The change of variables \(\boldsymbol b\mapsto-\boldsymbol b\)
turns this identity into
\eqref{r: cubic character formula}.  Thus
Theorem~\ref{r: unified p power identity} specializes exactly to
Corollary~\ref{r: cubic character identity}, which explains in what
sense it extends the cubic relation from \(A_2\) to
\(A_{\ell-1}\).

\subsection{The specialization \texorpdfstring{$\ell=4$}{ell=4}}
\label{r: ell four specialization}

The case \(\ell=4\) gives a nontrivial fourth-power identity on the
root lattice \(A_3\).  Fix
\(\tau\in\mathfrak H_1\) and write \(q=\etp{\tau}\).  For
\(x=(x_1,x_2,x_3)^T\in\numR^3\), define
\[
 Q_4(x)=x_1^2+x_2^2+x_3^2-x_1x_2-x_2x_3.
\]
For \(0\leq a\leq3\) and \(0\leq j\leq2\), define
\begin{equation}
\label{r: ell four explicit series}
 U_{a,j}(q)=\sum_{x\in\frac{a}{4}(1,2,3)^T+\numZ^3}
 \rmi^{j(-x_1+2x_2-x_3)}
 q^{Q_4\left(x\right)}.
\end{equation}

\begin{lemm}
Write \(R_4=\numZ/4\numZ\).  For \(0\leq a\leq3\),
\(b_1,b_2\in R_4\), and \(b=(b_1,b_2,0)^T\), define
\begin{equation}
\label{r: ell four general theta series}
\begin{aligned}
 \mathcal U_{a,b}(q)
 &:={}
 \Theta_{M_4,1}
 \left[\frac{a(1,2,3)^T}{4},\frac b4\right](\tau,0)\\
 &=\sum_{x\in\frac a4(1,2,3)^T+\numZ^3}
 \rmi^{(2b_1-b_2)x_1+(-b_1+2b_2)x_2-b_2x_3}
 q^{Q_4(x)}.
\end{aligned}
\end{equation}
Then \(\mathcal U_{a,(0,j,0)^T}(q)=U_{a,j}(q)\) for
\(0\leq j\leq2\), and
\begin{equation}
\label{r: ell four fourth power classification}
 \mathcal U_{a,b}(q)^4=
 \begin{cases}
  U_{a,0}(q)^4,
   &(b_1,b_2)=(0,0),\\
  U_{a,2}(q)^4,
   &(b_1,b_2)\in\{(0,2),(2,0),(2,2)\},\\
  U_{a,1}(q)^4,
   &\text{otherwise}.
 \end{cases}
\end{equation}
\end{lemm}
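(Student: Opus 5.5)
The plan is to read off the series expansion directly from \eqref{eq:SiegelTheta}, and then to obtain the fourth-power classification by combining Lemma~\ref{lemm:twochar} with the Coxeter symmetry \(R\) of \eqref{r: Coxeter matrix} for \(\ell=4\).

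\emph{Step 1: the explicit series.} Take \(n=1\), \(z=0\), \(M=M_4\) and \(x=g+a(1,2,3)^T/4\) in \eqref{eq:SiegelTheta}. Then
\[
 \tfrac12x^TM_4x=x_1^2+x_2^2+x_3^2-x_1x_2-x_2x_3=Q_4(x),
\]
and
\[
 \tfrac14x^TM_4b=\tfrac14\bigl((2b_1-b_2)x_1+(-b_1+2b_2)x_2-b_2x_3\bigr),
\]
since \(M_4(b_1,b_2,0)^T=(2b_1-b_2,-b_1+2b_2,-b_2)^T\). This gives the displayed form of \(\mathcal U_{a,b}\). Putting \(b=(0,j,0)^T\) makes the character exponent \(j(-x_1+2x_2-x_3)\), so \(\mathcal U_{a,(0,j,0)^T}=U_{a,j}\).

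\emph{Step 2: moves that preserve fourth powers.} Every first characteristic \(a\boldsymbol r/4\) lies in \(\tfrac14\numZ^3\). By the second part of Lemma~\ref{lemm:twochar} with \(N=4\), the fourth power \(\mathcal U_{a,b}^4\) is unchanged when \(b/4\) is shifted by an element of \(M_4^{-1}\numZ^3\). Equivalently, \(b\) may be changed by any element of \(4M_4^{-1}\numZ^3\). This lattice contains \(4\numZ^3\) and also \(\boldsymbol r=4M_4^{-1}e_3\), by \(M_\ell\boldsymbol r=\ell e_d\) in \eqref{r: Coxeter identities}.

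The next move uses \(R\). By \eqref{r: Coxeter identities}, \(R^TM_4R=M_4\) and \(R^{-1}\boldsymbol r=\boldsymbol r-4e_3\), so \(R^{\pm1}\) preserves the coset \(a\boldsymbol r/4+\numZ^3\). The substitution \(g\mapsto Rg\) therefore gives \(\Theta_{M_4,1}[u,Rv](\tau,0)=\Theta_{M_4,1}[R^{-1}u,v](\tau,0)\); this identity is already used in the proof of Theorem~\ref{r: unified p power identity}. Combining it with Lemma~\ref{lemm:twochar} for the first characteristic, I conclude that \(\mathcal U_{a,b}^4=\mathcal U_{a,R^kb'}^4\). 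Here \(b'\) is the representative of \(R^kb\) modulo \(4\numZ^3+\numZ\boldsymbol r\) whose third entry is \(0\). It is unique because the third entry of \(c\boldsymbol r\) is \(3c\), a unit multiple of \(c\) modulo \(4\). If needed, I would also use \(x\mapsto-Jx\), where \(J\) reverses the coordinates; it preserves \(Q_4\) and the coset.

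\emph{Step 3: orbit computation.} Steps 1 and 2 define an action of \(\langle R\rangle\) on the 16 pairs \((b_1,b_2)\in R_4^2\), and the remaining work is to compute its orbits. Two sample checks:
\begin{itemize}
\item \(R(0,2,0)^T=(0,0,2)^T\), and subtracting \(2\boldsymbol r\) gives \((2,0,0)^T\) modulo \(4\). So \((0,2)\sim(2,0)\), and iterating once more reaches \((2,2)\).
\item \(R(0,1,0)^T=(0,0,1)^T\), and subtracting \(3\boldsymbol r\) gives \((1,2,0)^T\) modulo \(4\).
\end{itemize}
I expect the 16 pairs to fall into three orbits: \(\{(0,0)\}\); \(\{(0,2),(2,0),(2,2)\}\), the pairs with both entries even; and the twelve remaining pairs. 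The orbits must not be mixed, and the character \(\mathcal X\)-type invariant "\(b\) modulo 2" behaves consistently with this. The twelve-element orbit contains \((0,1)\) and the three-element orbit contains \((0,2)\), which yields the representatives \(U_{a,1}\) and \(U_{a,2}\). Together with \(U_{a,0}\), this proves \eqref{r: ell four fourth power classification}.

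\emph{Main obstacle.} The hardest step is showing that the twelve "otherwise" pairs form a single orbit. If a step of \(R\) leaves two sub-orbits, for instance ones distinguished by \(b_1+2b_2\) modulo \(4\), I would adjoin \(-J\) and the sign change \(b\mapsto-b\), which is induced by \(g\mapsto-g\), to merge them. This is a finite check on 16 vectors, but it is where a careless sign or a wrong lift modulo \(\boldsymbol r\) could go wrong.
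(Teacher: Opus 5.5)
\textbf{There is a genuine gap in Step~3: the group you propose is too small to produce the three claimed classes.}

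Step~1 is correct. So is the mechanism in Step~2: $x\mapsto Rx$ and $x\mapsto -Jx$ preserve $Q_4$ and the coset $a\boldsymbol r/4+\numZ^3$, and reducing $b$ modulo $4\numZ^3+\numZ\boldsymbol r=4M_4^{-1}\numZ^3$ only costs fourth roots of unity.

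Normalize the third entry to $0$ and work modulo $4$. Then $R$ acts by $(b_1,b_2)\mapsto(b_2,\,b_1+2b_2)$, and $-J$ acts by $(b_1,b_2)\mapsto(-b_1,\,2b_1-b_2)$.

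\begin{itemize}
\item Since $R^4=I$, no $\langle R\rangle$-orbit can have twelve elements.
\item Your first sample is miscomputed. $R(2,0,0)^T=(0,2,0)^T$, so $R$ only swaps $(0,2)$ and $(2,0)$. Also $R(2,2,0)^T=(0,2,2)^T$, and $(0,2,2)^T+2\boldsymbol r=(2,6,8)^T\equiv(2,2,0)^T$, so $(2,2)$ is fixed rather than reached.
\item Adjoining $-J$ does not help. Neither does $b\mapsto-b$, which is in any case available only for even $a$: $g\mapsto -g$ sends the coset $a\boldsymbol r/4+\numZ^3$ to $-a\boldsymbol r/4+\numZ^3$.
\end{itemize}

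All of these maps fix $(2,2)$ and preserve the set of pairs with both entries odd. The orbits of the dihedral group $\langle R,-J\rangle$ of order $8$ on the $16$ pairs are therefore
\[
\{(0,0)\},\quad\{(0,2),(2,0)\},\quad\{(2,2)\},\quad\{(1,1),(1,3),(3,1),(3,3)\},
\]
together with the remaining eight pairs. Consequently your argument cannot prove $\mathcal U_{a,(2,2,0)^T}^4=U_{a,2}^4$ or $\mathcal U_{a,(1,1,0)^T}^4=U_{a,1}^4$.

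The missing ingredient is the simple reflections of the Weyl group of $A_3$, which is what the paper uses. For example, $x\mapsto(-x_1+x_2,x_2,x_3)$ preserves $Q_4$ and the coset, and gives the exact identity
\[
\mathcal U_{a,(b_1,b_2,0)^T}=\mathcal U_{a,(b_2-b_1,b_2,0)^T}.
\]
This reflection sends $(2,2)\mapsto(0,2)$, $(1,1)\mapsto(0,1)$, and $(0,3)\mapsto(3,3)$. Adjoining it to $\langle R\rangle$ therefore merges the pieces above into exactly the three claimed classes.

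The paper works with the three simple reflections $(x_1,x_2,x_3)\mapsto(-x_1+x_2,x_2,x_3)$, $(x_1,x_1-x_2+x_3,x_3)$, and $(x_1,x_2,x_2-x_3)$; the last contributes a factor $\rmi^{ab_2}$. These generate the full Weyl group $S_4$, not just the order-$8$ subgroup $\langle R,-J\rangle$, and its orbits on $(b_1,b_2)$ are exactly the three cases of the lemma.
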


\begin{proof}
The second equality in
\eqref{r: ell four general theta series} follows from
\(M_4(1,2,3)^T=4e_3\) and the fact that the last coordinate of \(b\) is zero.
It also gives
\(\mathcal U_{a,(0,j,0)^T}(q)=U_{a,j}(q)\).
The three changes of summation indices
\[
 \begin{aligned}
 (x_1,x_2,x_3)&\longmapsto(-x_1+x_2,x_2,x_3),\\
 (x_1,x_2,x_3)&\longmapsto(x_1,x_1-x_2+x_3,x_3),\\
 (x_1,x_2,x_3)&\longmapsto(x_1,x_2,x_2-x_3)
 \end{aligned}
\]
are bijections of \(\frac a4(1,2,3)^T+\numZ^3\) and preserve
\(Q_4(x)\).  Direct substitution gives
\[
\begin{aligned}
 \mathcal U_{a,(b_1,b_2,0)^T}(q)
 &=\mathcal U_{a,(b_2-b_1,b_2,0)^T}(q),\\
 \mathcal U_{a,(b_1,b_2,0)^T}(q)
 &=\mathcal U_{a,(b_1,b_1-b_2,0)^T}(q),\\
 \mathcal U_{a,(b_1,b_2,0)^T}(q)
 &=\rmi^{ab_2}\mathcal U_{a,(b_1+b_2,-b_2,0)^T}(q).
\end{aligned}
\]
Here all entries are read modulo \(4\).  Iterating these transformations
fixes \((0,0)\), maps \((0,2),(2,0),(2,2)\) to \((0,2)\), and maps
each of the remaining twelve pairs to \((0,1)\).  The only factors
introduced are powers of
\(\rmi\), which disappear after taking fourth powers.  This proves
\eqref{r: ell four fourth power classification}.
\end{proof}

\begin{coro}
The six series defined in \eqref{r: ell four explicit series}, where
\(a\in\{0,2\}\) and
\(0\leq j\leq2\), satisfy
\begin{equation}
\label{r: ell four power identity}
 U_{0,0}(q)^4-4U_{0,1}(q)^4+3U_{0,2}(q)^4
 =U_{2,0}(q)^4-4U_{2,1}(q)^4+3U_{2,2}(q)^4.
\end{equation}
\end{coro}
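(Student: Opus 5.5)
The plan is to specialize Theorem~\ref{r: unified p power identity}, in its parity-refined form \eqref{r: unified p power formula 2}, to the case $\ell=4$, $n=1$, $z=0$, and then use the fourth-power classification \eqref{r: ell four fourth power classification} to collapse the resulting sum onto the six series $U_{a,j}$.

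\emph{Choice of characters.} Since $\ell=4$ is even, $\kappa$ must lie in $2\numZ/4\numZ$ and we need $\lambda\kappa\not\equiv0\pmod 4$. I take $\kappa=2$ and $\lambda=1$, so that $\lambda\kappa=2\not\equiv0$. The remaining data are $d=3$, $\xi=\rmi$, $\boldsymbol r=(1,2,3)^T$ and $B=(b_1,b_2,0)^T$.

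\emph{The specialized identity.} With $\boldsymbol a_0=0$, so that $a$ runs over $\{0,2\}$, formula \eqref{r: unified p power formula 2} reads
\[
 \sum_{a\in\{0,2\}}\rmi^{a}\sum_{b_1,b_2\in\numZ/4\numZ}\rmi^{2(b_1+b_2)}\,\mathcal U_{a,(b_1,b_2,0)^T}(q)^4=0 .
\]
Here the identification with the series $\mathcal U$ holds by definition \eqref{r: ell four general theta series}, because the characteristic pair is exactly $\bigl[a(1,2,3)^T/4,\,B/4\bigr]$. Writing
\[
 S(a)=\sum_{b_1,b_2}(-1)^{b_1+b_2}\,\mathcal U_{a,(b_1,b_2,0)^T}(q)^4 ,
\]
the identity becomes $S(0)-S(2)=0$.

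\emph{Evaluating $S(a)$.} By the preceding lemma, the sixteen pairs $(b_1,b_2)$ fall into three classes.
\begin{enumerate}
  \item The pair $(0,0)$ has sign $+1$ and contributes $U_{a,0}^4$.
  \item The pairs $(0,2),(2,0),(2,2)$ all have $b_1+b_2$ even, so they contribute $3U_{a,2}^4$.
  \item The remaining twelve pairs each give $U_{a,1}^4$. Among all sixteen pairs, exactly eight have $b_1+b_2$ even. Four of those are already accounted for above, so the twelve remaining pairs consist of four with sign $+1$ and eight with sign $-1$. Their net contribution is $-4U_{a,1}^4$.
\end{enumerate}
Hence
\[
 S(a)=U_{a,0}^4-4U_{a,1}^4+3U_{a,2}^4 ,
\]
and $S(0)=S(2)$ is exactly \eqref{r: ell four power identity}.

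\emph{Main obstacle.} The only delicate point is bookkeeping. One must check that the phase convention in \eqref{r: unified p power formula 2}, namely $\xi^{\kappa(b_1+b_2)}$ with $B=[b_1,b_2,0]$, matches the indexing of $\mathcal U_{a,b}$, and that the sign count in the third class above is correct. Everything else is a direct application of the earlier results.
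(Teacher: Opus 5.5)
Your proposal is correct and follows the paper's proof essentially step for step: you use the same specialization of \eqref{r: unified p power formula 2} with $\ell=4$, $n=1$, $\lambda=1$, $\kappa=2$, $a_0=0$, $z=0$, and then collapse the inner sum using \eqref{r: ell four fourth power classification}. The only cosmetic difference is in the sign count for the twelve-element class: you count it directly (four $+1$, eight $-1$), whereas the paper deduces the weight $-4$ from the total signed weight $(1-1+1-1)^2=0$.
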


\begin{proof}
In \eqref{r: unified p power formula 2}, take
\(\ell=4\), \(n=1\), \(\boldsymbol\lambda=1\),
\(\boldsymbol\kappa=2\), \(\boldsymbol a_0=0\), and \(z=0\).
The hypotheses hold because
\(\boldsymbol\lambda\cdot\boldsymbol\kappa
=2\not\equiv0\pmod4\) and
\(\boldsymbol\kappa\in2\numZ/4\numZ\).  Since
\(\boldsymbol a_0+2\numZ/4\numZ=\{0,2\}\) and \(\xi=\rmi\), the identity becomes
\begin{equation}
\label{r: ell four unreduced power identity}
 \sum_{a\in\{0,2\}}\rmi^a
 \sum_{b_1,b_2\in R_4}
 (-1)^{b_1+b_2}
 \mathcal U_{a,(b_1,b_2,0)^T}(q)^4=0.
\end{equation}

The first two cases in
\eqref{r: ell four fourth power classification} have signed weights
\(1\) and \(3\), while the total weight is
\((1-1+1-1)^2=0\).  Thus the remaining case has weight \(-4\), and
\[
 \sum_{b_1,b_2\in R_4}
 (-1)^{b_1+b_2}
 \mathcal U_{a,(b_1,b_2,0)^T}(q)^4
 =
 U_{a,0}(q)^4-4U_{a,1}(q)^4+3U_{a,2}(q)^4,
\]
for \(a\in\{0,2\}\).  Substituting this into
\eqref{r: ell four unreduced power identity} and using
\(\rmi^2=-1\) gives \eqref{r: ell four power identity}.
\end{proof}

\begin{rema}
Zhu~\cite[Theorem~7.13]{Zhu23} proved three independent quadratic relations among $\Theta_{M_4,1}\left[\frac{a(1,2,3)^T}{4},\frac b4\right](\tau,z)$, where \(a\in\{0,2\}\) and
\[
b\in\bigl\{(0,0,0)^T,(0,2,0)^T,(2,0,0)^T,(2,2,0)^T\bigr\}.
\]
These relations cannot be deduced from Theorem~\ref{r: unified p power identity}, although we expect Theorem~\ref{thm:main} to imply them.
\end{rema}

\subsection{The specialization \texorpdfstring{$\ell=5$}{ell=5}}

The case \(\ell=5\) gives a parallel fifth-power identity on \(A_4\).
Write
\[
 R_5=\numZ/5\numZ,\qquad
 \xi=\etp{1/5},\qquad
 q=\etp{\tau},\qquad \tau\in\mathfrak H_1.
\]
For \(x=(x_1,x_2,x_3,x_4)^T\in\numR^4\), define
\[
 Q_5(x)=x_1^2+x_2^2+x_3^2+x_4^2
        -x_1x_2-x_2x_3-x_3x_4.
\]
For \(0\leq a\leq4\), define
\begin{equation}
\label{r: p five explicit orbit series}
\begin{aligned}
 T_{a,0}(q)
 &=\sum_{x\in\frac a5(1,2,3,4)^T+\numZ^4}q^{Q_5(x)},\\
 T_{a,j}(q)
 &=\sum_{x\in\frac a5(1,2,3,4)^T+\numZ^4}
 \xi^{(1-j)x_1+(3-2j)x_2-(1+j)x_3+(2j-1)x_4}
 q^{Q_5(x)}\qquad(1\leq j\leq5).
\end{aligned}
\end{equation}

\begin{lemm}
\label{r: ell five classification lemma}
For \(0\leq a\leq4\), \(b_1,b_2,b_3\in R_5\), and
\(b=(b_1,b_2,b_3,0)^T\), define
\[
\begin{aligned}
 \mathcal A_{a,b}(q)
 &:=
 \Theta_{M_5,1}
 \left[\frac{a(1,2,3,4)^T}{5},\frac b5\right](\tau,0)\\
 &=\sum_{x\in\frac a5(1,2,3,4)^T+\numZ^4}
 \xi^{(2b_1-b_2)x_1+(-b_1+2b_2-b_3)x_2
       +(-b_2+2b_3)x_3-b_3x_4}q^{Q_5(x)}.
\end{aligned}
\]
Then
\begin{equation}
\label{eq:TA_relation}
\begin{aligned}
T_{a,0}(q)&=\mathcal A_{a,(0,0,0,0)^T}(q),\\
 T_{a,j}(q)&=
 \mathcal A_{a,(2-2j,3-3j,1-2j,0)^T}(q)
 \qquad(1\leq j\leq5),
\end{aligned}
\end{equation}
and
\[
 \mathcal A_{a,b}(q)^5=
 \begin{cases}
  T_{a,0}(q)^5,&b=(0,0,0,0)^T,\\
  T_{a,j}(q)^5,&Q_5(b)\equiv j\pmod5,\quad 1\leq j\leq4,\\
  T_{a,5}(q)^5,&Q_5(b)\equiv0\pmod5,\quad
                       b\ne(0,0,0,0)^T.
 \end{cases}
\]
\end{lemm}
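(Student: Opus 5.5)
The plan mirrors the proof of the \(\ell=4\) lemma, with one extra ingredient: an invariant that separates the orbits.

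\textbf{Step 1: the explicit series and \eqref{eq:TA_relation}.} Since \(M_5(1,2,3,4)^T=5e_4\) and the last coordinate of \(b\) is zero, the cross term \(\etp{\tr(a^TM_5b/5)}\) from \eqref{eq:SiegelTheta} is trivial. Writing \(x=g+a(1,2,3,4)^T/5\) and using \(\tfrac12x^TM_5x=Q_5(x)\) then gives the second expression for \(\mathcal A_{a,b}\), with exponent \((M_5b)^Tx\). For \eqref{eq:TA_relation} I compute \(M_5b\) for \(b=(2-2j,3-3j,1-2j,0)^T\). Its first entry is \(4-4j-3+3j=1-j\), its second \(-(2-2j)+6-6j-(1-2j)=3-6j\equiv3-2j\pmod 5\), its third \(-(3-3j)+2-4j=-1-j\), and its fourth \(-(1-2j)=2j-1\). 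These match the exponent in \eqref{r: p five explicit orbit series}. One subtlety: the reductions are mod \(5\) while \(x\in\frac a5(\dots)+\numZ^4\) is fractional. I will check that \((M_5b)^Tx\) only changes by integers when \(M_5b\) changes by \(5\numZ^4\); this holds because \(5x\) is integral, so the phase is well defined. The case \(j=5\) gives \(b\equiv(2,3,1,0)\) and the \(T_{a,0}\) case is trivial.

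\textbf{Step 2: invariance under the isometries of \(A_4\) fixing the coset.} As in the \(\ell=4\) case, I will use the four simple reflections \(s_i\) of \(A_4\), acting on coordinates as \(x_i\mapsto x_{i-1}-x_i+x_{i+1}\) (with \(x_0=x_5=0\)). Each preserves \(Q_5\). Each also maps \(\frac a5(1,2,3,4)^T+\numZ^4\) to itself, because \(s_i(1,2,3,4)^T\equiv(1,2,3,4)^T\) modulo \(\numZ^4\); the difference is \(5e_4\) times something, or zero. Substituting gives \(\mathcal A_{a,b}=\xi^{c}\mathcal A_{a,b'}\), where \(b'\) is obtained from \(b\) by the dual action \(s_i^{-T}\) on \(M_5b\) and \(c\) is some integer. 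The phase vanishes after taking fifth powers. A subtlety is that the new \(b'\) must have last coordinate \(0\); if it does not, I will renormalize using Lemma~\ref{lemm:twochar}, shifting \(b\) by \(M_5^{-1}\numZ^4\) scaled appropriately. The observation is that the classes of \(b/5\) modulo \(M_5^{-1}\numZ^4\) correspond exactly to \(\{b:b_4=0\}\) modulo \(5\), as in the proof of Theorem~\ref{r: unified p power identity}. That renormalization again introduces only fifth roots of unity, which disappear after the fifth power.

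\textbf{Step 3: orbit classification (the main obstacle).} It remains to show that the Weyl group \(S_5\) acts on the \(125\) vectors \(b=(b_1,b_2,b_3,0)\) with exactly six orbits: \(\{0\}\) together with the five level sets \(Q_5(b)\equiv j\). The invariant is the induced quadratic form on \(\numZ^4/5\numZ^4\) modulo the radical direction. This is well defined because the discriminant group is generated by \((1,2,3,4)\) and \(Q_5\) is isometry invariant, so the orbits are contained in the level sets. For transitivity, I will work in the model \(A_4\otimes\F_5=\{v\in\F_5^5:\sum v_i=0\}\) modulo \(\F_5\boldsymbol1\), which is \(\F_5^5/\langle\boldsymbol1\rangle\) restricted to sum zero. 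Here \(S_5\) acts by permutations, and I will count orbits directly. By orbit–stabilizer the sizes must sum to \(125\), and I would check with the representatives \((2-2j,3-3j,1-2j,0)\) that each of the nonzero level sets, whose sizes are about \(24\) or \(30\), is a single \(S_5\)-orbit. If a pure permutation count proves messy, I would fall back on a finite enumeration of the \(125\) cases via the three-reflection moves, exactly as in the \(\ell=4\) proof, and state it as a direct check.
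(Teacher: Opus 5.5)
Your framework is the paper's own: the four simple reflections as changes of summation variable, renormalizing the last coordinate with Lemma~\ref{lemm:twochar}, the model of zero-sum vectors in \(\F_5^5\) modulo \(\F_5\boldsymbol1\) with \(S_5\) permuting coordinates, and \(Q_5\bmod5\) as the orbit invariant. The gap is that the six-orbit statement, which is the real content of the lemma, is only promised. The level-set sizes you anticipate are also wrong: the sets \(Q_5\equiv1,2,3,4\) have \(20,30,30,20\) elements, and \(\{Q_5\equiv0,\ b\ne0\}\) has \(24\).

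Two pieces are missing. First, you assert rather than check that your moves act by permutations in that model. You need the dictionary \(c(b)=(b_1,b_2-b_1,b_3-b_2,-b_3,0)\), and you must verify that the four moves act on \(c(b)\) as adjacent transpositions. In particular, the renormalized fourth move \((b_1,b_2,b_3,0)\mapsto(b_1+b_3,b_2+2b_3,-b_3,0)\) is the swap of \(c_4,c_5\) followed by translation by \(b_3\boldsymbol1\). Second, you must do the count itself. The paper classifies zero-sum \(5\)-element multisets in \(\F_5\) up to translation. The nonconstant types \(4+1\), \(3+2\), \(2+1+1+1\) cannot occur, which leaves \(\{0,0,0,d,-d\}\) and \(\{0,0,d,d,-2d\}\) for \(d=1,2\), together with \(\F_5\) itself.

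Your orbit--stabilizer plan also closes once executed. The \(c\)-vectors of your representatives \((2-2j,3-3j,1-2j,0)\), \(j=1,\dots,5\), are translates of \(\{0,0,0,1,-1\}\), \(\{0,0,3,3,4\}\), \(\{0,0,1,1,3\}\), \(\{0,0,0,2,-2\}\) and \(\F_5\). In the quotient their stabilizers have orders \(6,4,4,6,5\). For \(\F_5\), every translate is a rearrangement, so each translation is realized by exactly one permutation. The orbits therefore have sizes \(20,30,30,20,24\). Since \(1+20+30+30+20+24=125\), these orbits exhaust all classes and hence coincide with the level sets.

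Step 1 also contains an incorrect argument. Write \(\boldsymbol r=(1,2,3,4)^T\) and \(x=g+\frac a5\boldsymbol r\). If \(M_5b\) changes by \(5k\), the term indexed by \(x\) is multiplied by \(\xi^{5k^Tx}=\etp{k^Tx}=\xi^{a\,k^T\boldsymbol r}\). This is not \(1\) in general; take \(k=e_1\), \(a=1\). Integrality of \(5x\) only makes the exponent change by an integer, but the base is \(\xi\), so you need a multiple of \(5\).

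The correct invariance is under \(b\mapsto b+5m\) with \(m_4=0\), because \(M_5x=M_5g+ae_4\in\numZ^4\). This makes \(\mathcal A_{a,b}\) well defined for \(b\in R_5^3\). It also gives the exact equality \(T_{a,5}=\mathcal A_{a,(2,3,1,0)^T}\) from \((-8,-12,-9,0)\equiv(2,3,1,0)\pmod 5\). The Step~2 invariance holds only up to fifth roots of unity, so it cannot substitute here, because \eqref{eq:TA_relation} is an exact identity.

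Finally, the second entry of \(M_5b\) is exactly \(-(2-2j)+2(3-3j)-(1-2j)=3-2j\). Your intermediate \(3-6j\) is an arithmetic slip, and \(3-6j\not\equiv3-2j\pmod5\).
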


\begin{proof}
The displayed expansion of \(\mathcal A_{a,b}\) follows from
\(M_5(1,2,3,4)^T=5e_4\) and the fact that the last coordinate of \(b\) is zero.

The four changes of summation indices
\[
\begin{aligned}
 (x_1,x_2,x_3,x_4)&\longmapsto(-x_1+x_2,x_2,x_3,x_4),\\
 (x_1,x_2,x_3,x_4)&\longmapsto(x_1,x_1-x_2+x_3,x_3,x_4),\\
 (x_1,x_2,x_3,x_4)&\longmapsto(x_1,x_2,x_2-x_3+x_4,x_4),\\
 (x_1,x_2,x_3,x_4)&\longmapsto(x_1,x_2,x_3,x_3-x_4)
\end{aligned}
\]
are bijections of
\(\frac a5(1,2,3,4)^T+\numZ^4\) and preserve \(Q_5(x)\).
Direct substitution gives, with the argument \(q\) suppressed,
\[
\begin{aligned}
 \mathcal A_{a,(b_1,b_2,b_3,0)^T}
 &=\mathcal A_{a,(b_2-b_1,b_2,b_3,0)^T},\\
 \mathcal A_{a,(b_1,b_2,b_3,0)^T}
 &=\mathcal A_{a,(b_1,b_1+b_3-b_2,b_3,0)^T},\\
 \mathcal A_{a,(b_1,b_2,b_3,0)^T}
 &=\mathcal A_{a,(b_1,b_2,b_2-b_3,0)^T},\\
 \mathcal A_{a,(b_1,b_2,b_3,0)^T}
 &=\xi^{ab_3}
   \mathcal A_{a,(b_1+b_3,b_2+2b_3,-b_3,0)^T}.
\end{aligned}
\]
Here all entries are read modulo \(5\).  In the last relation, the
two exponents differ by \(5b_3(x_3-x_2)\).  Since
\[
 x_3-x_2\in\frac a5+\numZ,
 \qquad
 5b_3(x_3-x_2)\in ab_3+5\numZ,
\]
the last relation has the stated factor \(\xi^{ab_3}\).  Thus
these transformations preserve fifth powers.

We now determine the orbits generated by these transformations.
For \(b=(b_1,b_2,b_3,0)^T\), define
\[
 c(b)=(c_1,c_2,c_3,c_4,c_5)
 :=(b_1,b_2-b_1,b_3-b_2,-b_3,0)\in R_5^5.
\]
Thus \(c_1+\cdots+c_5=0\).  Conversely, every zero-sum tuple with
last entry zero determines \(b\) uniquely by
\[
 b_i=c_1+\cdots+c_i\qquad(1\leq i\leq3).
\]
For the four transformed characteristics above, direct calculation
gives
\[
\begin{aligned}
 c((b_2-b_1,b_2,b_3,0)^T)
  &=(c_2,c_1,c_3,c_4,c_5),\\
 c((b_1,b_1+b_3-b_2,b_3,0)^T)
  &=(c_1,c_3,c_2,c_4,c_5),\\
 c((b_1,b_2,b_2-b_3,0)^T)
  &=(c_1,c_2,c_4,c_3,c_5),\\
 c((b_1+b_3,b_2+2b_3,-b_3,0)^T)
  &=(c_1,c_2,c_3,c_5,c_4)+b_3(1,1,1,1,1).
\end{aligned}
\]
Hence the four transformations realize the four adjacent
transpositions of the entries of \(c(b)\), followed by the unique
common translation that restores the last entry to zero.  Since
adjacent transpositions generate all permutations, \(b\) and \(b'\)
lie in the same orbit if and only if, for some permutation \(\pi\)
and some \(t\in R_5\),
\[
 c(b')=(c_{\pi(1)},c_{\pi(2)},c_{\pi(3)},c_{\pi(4)},c_{\pi(5)})
       +t(1,1,1,1,1).
\]
In particular, characteristics in the same orbit have equal fifth
powers.

It remains to classify the zero-sum multisets of five elements of
\(R_5\) up to a common translation.  The identity
\[
 \sum_{i=1}^5c_i^2=2Q_5(b)
\]
shows that \(Q_5(b)\pmod5\) is constant on every orbit, because
\[
 \sum_{i=1}^5(c_i+t)^2
 =\sum_{i=1}^5c_i^2+2t\sum_{i=1}^5c_i+5t^2
 =\sum_{i=1}^5c_i^2
\]
in \(R_5\), where \(2\) is invertible.

Suppose first that an entry is repeated.  After translating a value of
maximal multiplicity to zero, the
types \(4+1\) and \(3+2\) have the forms
\[
 \{0,0,0,0,d\},\qquad \{0,0,0,d,d\}.
\]
The zero-sum condition gives \(d=0\) and \(2d=0\), respectively, so
both reduce to the constant multiset.
The type \(2+1+1+1\) would require three distinct nonzero elements
\(r,s,u\in R_5\) to have sum zero.  If \(v\) is the remaining
nonzero residue, then
\[
 0=1+2+3+4=r+s+u+v=v,
\]
a contradiction.  The remaining nonconstant
possibilities are therefore
\[
 \{0,0,0,d,-d\},\qquad
 \{0,0,d,d,-2d\},
 \qquad d\in R_5^\times.
\]
Replacing \(d\) by \(-d\) leaves the first multiset unchanged and,
for the second multiset, is equivalent to translating all entries by \(-d\),
since
\[
 \{-d,-d,0,0,-3d\}
 =\{0,0,-d,-d,-2(-d)\}.
\]
Thus the cases \(d=1,2\) give all four classes.  Their values of \(Q_5(b)\)
are determined by
\[
 \frac12\bigl(d^2+(-d)^2\bigr)=d^2,
 \qquad
 \frac12\bigl(2d^2+(-2d)^2\bigr)=3d^2
 \pmod5,
\]
respectively, and hence are \(1,4\) and \(3,2\).  If no entry is
repeated, the multiset is \(R_5\), and
\[
 2Q_5(b)=0^2+1^2+2^2+3^2+4^2\equiv0\pmod5.
\]
Together with the constant multiset, which corresponds to \(b=0\),
this proves that the six orbits are
\[
 b=0,\qquad
 Q_5(b)\equiv j\pmod5\ (1\leq j\leq4),\qquad
 Q_5(b)\equiv0\pmod5\text{ and }b\ne0.
\]

Finally, we identify these six orbits with the series in
\eqref{r: p five explicit orbit series}.  For \(1\leq j\leq5\),
\[
 M_5(2-2j,3-3j,1-2j,0)^T
 =(1-j,3-2j,-1-j,2j-1)^T
\]
and
\[
 Q_5\bigl((2-2j,3-3j,1-2j,0)^T\bigr)
 =5j^2-9j+5\equiv j\pmod5.
\]
Replacing the characteristic vector \(b\) by \(b+5m\), where
\(m\in\numZ^4\) and \(m_4=0\), does not change the series.  Indeed,
for \(x=g+\frac a5(1,2,3,4)^T\), where \(g\in\numZ^4\),
\[
\begin{aligned}
 x^TM_5m
 &=g^TM_5m+\frac a5(1,2,3,4)^TM_5m\\
 &=g^TM_5m+am_4=g^TM_5m\in\numZ.
\end{aligned}
\]
Consequently,
\(\xi^{x^TM_5(b+5m)}=\xi^{x^TM_5b}\).
It follows from \eqref{r: p five explicit orbit series} that
\[
 T_{a,j}(q)=
 \mathcal A_{a,(2-2j,3-3j,1-2j,0)^T}(q)
 \qquad(1\leq j\leq5),
\]
where the characteristic is read modulo \(5\).  Hence \(T_{a,j}\)
corresponds to the orbit with \(Q_5(b)\equiv j\pmod5\) for
\(1\leq j\leq4\).  For \(j=5\), the characteristic is congruent to
\((2,3,1,0)^T\ne0\) and belongs to the nonzero orbit with
\(Q_5(b)\equiv0\pmod5\), while \(T_{a,0}\) corresponds to \(b=0\).
This proves \eqref{eq:TA_relation}.  The last assertion follows from
\eqref{eq:TA_relation} and the classification of the orbits.
\end{proof}

\begin{coro}
The series defined in \eqref{r: p five explicit orbit series} satisfy
\[
\begin{aligned}
&T_{0,0}(q)^5-5T_{0,1}(q)^5+5T_{0,2}(q)^5
 +5T_{0,3}(q)^5-5T_{0,4}(q)^5-T_{0,5}(q)^5\\
={}&T_{1,0}(q)^5-5T_{1,1}(q)^5+5T_{1,2}(q)^5
 +5T_{1,3}(q)^5-5T_{1,4}(q)^5\\
={}&T_{2,0}(q)^5-5T_{2,1}(q)^5+5T_{2,2}(q)^5
 +5T_{2,3}(q)^5-5T_{2,4}(q)^5.
\end{aligned}
\]
\end{coro}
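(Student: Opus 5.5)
We apply \eqref{r: unified p power formula} with \(\ell=5\), \(d=4\), \(n=1\), \(z=0\), and \(\boldsymbol\kappa=1\). Since \(\ell\) is odd, no parity condition arises. For every \(\boldsymbol\lambda\in R_5^\times\) we have \(\boldsymbol\lambda\cdot\boldsymbol\kappa\neq0\), so
\[
 \sum_{a\in R_5}\xi^{\lambda a}F(a)=0\quad(\lambda=1,2,3,4),
 \qquad
 F(a):=\sum_{b_1,b_2,b_3\in R_5}\xi^{b_1+b_2+b_3}\mathcal A_{a,(b_1,b_2,b_3,0)^T}(q)^5 .
\]
By Fourier inversion on \(R_5\), these four equations say exactly that \(F(a)\) is independent of \(a\). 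Hence \(F(0)=F(1)=F(2)\), and this is the shape of the claimed identity. The remaining task is to evaluate each \(F(a)\) in terms of the \(T_{a,j}\).

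To evaluate \(F(a)\), we use Lemma~\ref{r: ell five classification lemma}. It replaces each fifth power \(\mathcal A_{a,b}^5\) by \(T_{a,j}^5\), where \(j\) labels the orbit of \(b\). Thus \(F(a)=\sum_{j=0}^5 w_j T_{a,j}^5\), where \(w_j\) is the sum of \(\xi^{b_1+b_2+b_3}\) over the \(b\) in orbit \(j\). We compute these weights in the coordinates \(c(b)\). There \(b_1+b_2+b_3=3c_1+2c_2+c_3\), and the orbits are described by multisets. The orbit with \(b=0\) gives \(w_0=1\). The two classes \(\{0,0,0,d,-d\}\) and \(\{0,0,d,d,-2d\}\) (with \(d=1,2\)) and the class \(\{0,1,2,3,4\}\) require explicit enumeration. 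Each nonconstant orbit has \(20\) or \(24\) elements, and the sum of the exponential over each is a short Gauss-type sum. We expect the results \(w_1=-5\), \(w_2=5\), \(w_3=5\), \(w_4=-5\), \(w_5=-1\). As a consistency check, the total \(\sum_b\xi^{b_1+b_2+b_3}=0\) holds, since \(1-5+5+5-5-1=0\).

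It then remains to explain why \(T_{a,5}\) is absent for \(a=1,2\), and why \(F(3)\) and \(F(4)\) need not be listed. For the first point, we show that \(T_{a,5}=0\) when \(a\neq0\). The orbit with \(Q_5(b)\equiv0\), \(b\ne0\), corresponds to the multiset \(R_5\), which is stable under translation \(c\mapsto c+t\). We will find a composite of the four transformations of Lemma~\ref{r: ell five classification lemma} that is a permutation realizing such a translation. This composite fixes \(b\) but accumulates a total phase \(\xi^{ka}\) with \(k\not\equiv0\pmod5\), in the manner of the proof of \eqref{r: cubic vanishing}. It follows that \(\mathcal A_{a,b}=\xi^{ka}\mathcal A_{a,b}\), so \(\mathcal A_{a,b}=0\) for \(a\ne0\). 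For the second point, \(F(3)=F(2)\) and \(F(4)=F(1)\) follow from the substitution \(x\mapsto-x\). It sends \(a\mapsto-a\) and \(b\mapsto-b\), preserves \(Q_5(b)\), and multiplies \(\xi^{b_1+b_2+b_3}\) by at most a harmless conjugation. In any case these two sums are not needed for the stated identities.

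The main obstacle is the explicit computation of the six weights \(w_j\), together with locating the composite transformation that yields a nontrivial phase. The weights come from counting over roughly \(125\) characteristics split into orbits. We would first enumerate orbit representatives in the \(c\)-coordinates, then average \(\xi^{3c_1+2c_2+c_3}\) over permutations and translations of each multiset. Exploiting the \(S_5\)-symmetry should reduce each \(w_j\) to a few character sums over \(R_5\).
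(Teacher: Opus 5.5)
Your outline is correct, and its central step differs from the paper's. The paper uses only the instance $\lambda=\kappa=1$, which gives $\sum_a\xi^aH_a=0$ (its $H_a$ is your $F(a)$). It then needs two further ingredients: the symmetry $x\mapsto-x$ to get $H_4=H_1$ and $H_3=H_2$, and a coefficientwise Galois automorphism $\xi\mapsto\xi^2$, after checking that it fixes each $H_a$, to split the remaining relation into $H_1=H_2$ and $H_0=H_1$. You use all four admissible instances $(\lambda,1)$ with $\lambda\in R_5^\times$, and Fourier inversion on $R_5$ makes $F$ constant at once for all five values of $a$; your remark on $x\mapsto-x$ can therefore be dropped. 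The underlying information is the same: applying $\xi\mapsto\xi^k$ to the paper's single identity and reindexing $b\mapsto k^{-1}b$ gives exactly your instance $(k,1)$. Your route simply uses these instances directly and avoids the symmetry and Galois bookkeeping. Your plan for $T_{a,5}=0$ is the paper's. Starting from $(2,3,1,0)^T$, the first and fourth transformations reach $(2,0,4,0)^T$ with phase $\xi^a$, while the second and third reach it with phase $1$; this is a loop of the kind you describe, realizing a nontrivial translation of the $c$-coordinates.

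The step you leave open is the computation of the weights, which you only announce. The values $1,-5,5,5,-5,-1$ are correct, but your supporting description is not. The classes $Q_5(b)\equiv2,3$, which come from $\{0,0,d,d,-2d\}$, have $30$ elements each, so the orbit sizes are $1,20,30,30,20,24$. Moreover, the check $\sum_jw_j=0$ tests only one linear combination. In your coordinates the computation is short. The exponent $b_1+b_2+b_3=3c_1+2c_2+c_3\equiv-\sum_{i=1}^5ic_i\pmod5$ is translation invariant, so $w_j$ is the sum of $\xi^{-\sum_iic_i}$ over the ordered arrangements of a single multiset. For $\{0,0,0,d,-d\}$, summing over distinct positions $p\neq p'$ gives $\sum_{p\neq p'}\xi^{-d(p-p')}=5\sum_{r\neq0}\xi^r=-5$. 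For $\{0,0,d,d,-2d\}$, substitute $x=-dp$ for the positions. The sum becomes $\sum_{x_3}\xi^{-2x_3}e_2(\xi^y:y\neq x_3)$, where $e_2$ is the second elementary symmetric function. Since $e_2(\xi^y:y\neq x_3)=\xi^{2x_3}$, this sum equals $5$. Then $w_5=-1$ follows from $\sum_jw_j=0$. The paper argues differently: it sets $s=b_1+b_2+b_3$ and uses $2Q_5(b)=(b_2+2b_3)^2+s(2b_1+b_2+3b_3)$, with an invertible change of variables mod $5$. Thus each value of $Q_5$ occurs $5$ times for every $s\neq0$, while for $s=0$ the counts for $Q_5\equiv0,\dots,4$ are $5,0,10,10,0$. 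Subtracting $5$ from each count and separating $b=0$ gives the six weights.
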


\begin{proof}
In \eqref{r: unified p power formula}, take
\(\ell=5\), \(n=1\), \(\boldsymbol\lambda=\boldsymbol\kappa=1\),
and \(z=0\).  Since
\(\boldsymbol\lambda\cdot\boldsymbol\kappa=1\ne0\) in \(R_5\),
we obtain
\[
 \sum_{a,b_1,b_2,b_3\in R_5}
 \xi^{a+b_1+b_2+b_3}
 \mathcal A_{a,(b_1,b_2,b_3,0)^T}(q)^5=0.
\]

Write \(s=b_1+b_2+b_3\).  The linear map
\[
 (b_1,b_2,b_3)\longmapsto
 (s,b_2+2b_3,2b_1+b_2+3b_3)
\]
has determinant \(3\) in \(R_5\), and
\[
 2Q_5(b)=(b_2+2b_3)^2+s(2b_1+b_2+3b_3).
\]
For fixed \(s\ne0\), every value of \(Q_5(b)\) therefore occurs five
times.  For \(s=0\), the numbers of solutions for
\(Q_5(b)\equiv0,1,2,3,4\pmod5\) are respectively
\(5,0,10,10,0\), since \((b_2+2b_3)^2=2Q_5(b)\).  As
\(\sum_{s=1}^4\xi^s=-1\), separating \(b=0\) shows that the
character sums of the six cases in
Lemma~\ref{r: ell five classification lemma}, in the order
\(j=0,\ldots,5\), are
\[
 1,\ -5,\ 5,\ 5,\ -5,\ -1.
\]
Suppressing the common argument \(q\), define
\[
 H_a:=T_{a,0}^5-5T_{a,1}^5+5T_{a,2}^5
     +5T_{a,3}^5-5T_{a,4}^5-T_{a,5}^5,
\]
so the preceding identity becomes
\[
 \sum_{a=0}^4\xi^aH_a=0.
\]

The change of variables \(x\mapsto-x\) gives
\(\mathcal A_{5-a,b}=\mathcal A_{a,-b}\) for \(1\leq a\leq4\).
Since \(Q_5(-b)=Q_5(b)\), the classification in the lemma yields
\(H_4=H_1\) and \(H_3=H_2\).  Thus
\[
 H_0+(\xi+\xi^4)H_1+(\xi^2+\xi^3)H_2=0.
\]
The coefficientwise Galois automorphism \(\sigma\) of these
\(\numQ(\xi)\)-valued \(q\)-series, defined by
\(\sigma(\xi)=\xi^2\), sends \(\mathcal A_{a,b}^5\) to
\(\mathcal A_{a,2b}^5\).  Since
\(Q_5(2b)\equiv4Q_5(b)\pmod5\), it exchanges the classes \(1,4\)
and \(2,3\), and fixes the other two.  The corresponding coefficients
in \(H_a\) are equal, so \(\sigma\) fixes \(H_a\).  Applying \(\sigma\)
to the preceding identity and subtracting gives
\[
 \bigl(\xi+\xi^4-\xi^2-\xi^3\bigr)(H_1-H_2)=0.
\]
The first factor is nonzero, so \(H_1=H_2\), and then
\(1+\xi+\xi^2+\xi^3+\xi^4=0\) gives \(H_0=H_1\).

Finally, applying, respectively, the composition of the first and fourth
transformations and that of the second and third transformations from
the proof of the lemma to \((2,3,1,0)^T\) gives
\[
 T_{a,5}=\xi^a\mathcal A_{a,(2,0,4,0)^T},
 \qquad
 T_{a,5}=\mathcal A_{a,(2,0,4,0)^T}.
\]
Thus \(T_{a,5}=0\) for \(1\leq a\leq4\).  In particular,
\(T_{1,5}=T_{2,5}=0\), and the asserted identities follow from
\(H_0=H_1=H_2\).
\end{proof}

\section{Doubled-order identities}

The root-lattice construction of the preceding section can also be
carried out with \(2\ell\) factors.  It gives a family of
\(2\ell\)-th-power identities; the smallest case \(\ell=2\)
recovers Jacobi's quartic identity and its degree-two analogue.  Here
the term ``doubled order'' refers to the exponent \(2\ell\).  This
construction is distinct both from the square companions of
Section~\ref{r: square companions section}, which involve theta
series at \(\tau\) and \(2\tau\), and from the same-order
fourth-power identity on \(A_3\) in
Subsection~\ref{r: ell four specialization}.  We retain
\(M_\ell,\boldsymbol r,\xi\), and \(R\) from
\eqref{r: p data} and \eqref{r: Coxeter matrix}, together with the
identities and integral matrix in \eqref{r: Coxeter identities} and
\eqref{r: auxiliary integral matrix}.
Throughout this section, \(d=\ell-1\), \(\nu=2\ell\), and \(L\)
denotes the lower triangular matrix from
Section~\ref{r: arbitrary order section}.  For a matrix \(x\) with
entries in \(\numZ/\nu\numZ\), the notation
\(x\equiv0\pmod{\ell}\) means that every entry of \(x\) lies in
\(\ell\numZ/\nu\numZ\).

\subsection{The \texorpdfstring{$2\ell$}{2ell}-fold identity}

\begin{thm}
\label{r: doubled order power identity}
Let \(\ell\geq2\), \(n\geq1\), \(\tau\in\mathfrak H_n\), and
\(z\in\numC^{d,n}\).  If
\(\boldsymbol\lambda,\boldsymbol\kappa
\in(\numZ/\ell\numZ)^n\) satisfy
\(\boldsymbol\lambda\cdot\boldsymbol\kappa\not\equiv0\pmod\ell\),
then
\begin{equation}
\label{r: doubled order power formula}
 \sum_{\substack{\boldsymbol a\in(\numZ/\ell\numZ)^n,
                   \,\boldsymbol c\in\{0,1\}^n\\
                   \boldsymbol b_1,\ldots,\boldsymbol b_{d-1}
                   \in(\numZ/2\ell\numZ)^n}}
 \xi^{\boldsymbol\lambda\cdot\boldsymbol a+
       \boldsymbol\kappa\cdot
       (\boldsymbol b_1+\cdots+\boldsymbol b_{d-1}+\boldsymbol c)}
 \Theta_{M_\ell,n}^{\,2\ell}
 \!\left[\frac{\boldsymbol r\boldsymbol a}{\ell},
          \frac{B}{2\ell}\right](\tau,z)=0.
\end{equation}
Here
\(B=[\boldsymbol b_1,\ldots,\boldsymbol b_{d-1},\boldsymbol c]\).
When \(d=1\), the list
\(\boldsymbol b_1,\ldots,\boldsymbol b_{d-1}\) is empty.  The
exponent of \(\xi\) is computed modulo \(\ell\).
\end{thm}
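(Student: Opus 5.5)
We will follow the proof of Theorem~\ref{r: unified p power identity} closely, replacing \(\mu=\ell\) by \(\mu=\nu=2\ell\) factors. We apply Theorem~\ref{thm:main} with \(m=d\), \(\mu=\nu\), \(M_1=M_2=M_\ell\), and
\[
 A=I_{d\nu}+\frac1\nu J_\nu\otimes(R-I_d),
\]
where \(J_\nu\) is the all-ones matrix of size \(\nu\). The identity \(A^TM_\ell^{(\nu)}A=M_\ell^{(\nu)}\) follows as before. The cross terms involve \(\frac{\nu}{\nu}\) and \(\frac{\nu}{\nu^2}\), so they reduce to \((R-I)^TM_\ell+M_\ell(R-I)+(R-I)^TM_\ell(R-I)=0\), which is equivalent to \(R^TM_\ell R=M_\ell\).

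Reduction modulo \(\numZ^{d\nu,n}\) gives
\[
 \numZ^{d\nu,n}+A\numZ^{d\nu,n}=\numZ^{d\nu,n}+\{[w,\ldots,w]:w\in(R-I_d)\numZ^{d,n}/\nu\}.
\]
We take \(\widetilde A\) to be the analogue of the previous one, with the last block given by \(x_\nu\mapsto x_\nu E_\nu^T\), where \(E_\nu=\nu(R-I_d)^{-1}=2E\). Then \(|\det\widetilde A|=2^d\ell^{d-1}\). We will recompute the two quotients. The \(\alpha\)-quotient consists of \([v/\nu,\ldots,v/\nu]\) with \(v\) ranging over \(\{v\in(\numZ/\nu\numZ)^{d,n}:\boldsymbol1^Tv\equiv0\pmod\nu\}\), of order \(\nu^{(d-1)n}\cdot\) (a correction factor to be checked against \(|\det\widetilde A A|^n=(2^d\ell^{d-1})^n\)). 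The \(\beta\)-quotient is \(\numZ^{d,n}/(\nu\numZ^{d,n}+\boldsymbol1\numZ^{1,n})\). This bookkeeping yields a finite Fourier relation of the same shape as \eqref{r: abstract Fourier relation}, now with \(\xi\) replaced by \(\etp{1/\nu}\), for the family
\[
 F_{x,s}=\Theta^{\,\nu}_{M_\ell,n}\left[\frac{x}{\ell},\gamma_s\right](\tau,z),\qquad \gamma_s=-\frac{Ls}{\nu}.
\]
We restrict to first characteristics in \(\frac1\ell\numZ\), that is, \(x\) taken modulo \(\ell\) or \(v\equiv0\pmod 2\), because the left side \(F_{R^{-1}x,t}\) needs \(a_i=x/\ell\). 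Well-definedness of \(F_{x,s}\) follows from Lemma~\ref{lemm:twochar} with \(N=\nu\), since the phases are \(\nu\)-th roots of unity.

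Next we prove an analogue of Lemma~\ref{r: character orthogonality}. We multiply the Fourier relation at \(x=\boldsymbol r\boldsymbol a\) by \(\xi^{\boldsymbol\lambda\cdot\boldsymbol a}\) times a character in \(t\) attached to \(\boldsymbol\kappa\), namely \(\langle 2\boldsymbol r\boldsymbol\kappa,t\rangle_\nu\), and then sum. Orthogonality collapses the sum to the shift \(\boldsymbol a\mapsto\boldsymbol a+\boldsymbol\kappa\), giving \(S=\xi^{-\boldsymbol\lambda\cdot\boldsymbol\kappa}S\) and hence \(S=0\). The point of doubling is that \(2\boldsymbol r\boldsymbol\kappa\) automatically satisfies \(\boldsymbol1^T(2\boldsymbol r\boldsymbol\kappa)=\ell(\ell-1)\boldsymbol\kappa\equiv0\pmod\nu\). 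This is why no parity hypothesis on \(\boldsymbol\kappa\) is needed, and why \(\ell=2\) is allowed: the \(\mathcal X\)-group is now nontrivial even when \(d=1\).

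Finally, we normalize the representative of \(s\) modulo \(\boldsymbol1\numZ\) and \(\nu\). Adding \(\boldsymbol1\eta\) shifts the last row of \(B=-Ls\) by \(-d\eta\equiv\eta\pmod\ell\), but modulo \(\nu\) this only reduces the last row to \(\{0,\ldots,\ell-1\}\). Combined with the fact that a shift of the last row by \(\ell\) changes \(\gamma_s\) by an element of \(M_\ell^{-1}\numZ\) up to an allowed character, the last row can be taken in \(\{0,1\}\). This produces \(B=[\boldsymbol b_1,\ldots,\boldsymbol b_{d-1},\boldsymbol c]\) with \(\boldsymbol c\in\{0,1\}^n\), and the character becomes \(\xi^{\boldsymbol\kappa\cdot(\boldsymbol b_1+\cdots+\boldsymbol c)}\) via \(\boldsymbol r^Ts\equiv\boldsymbol1^TB\). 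We expect this last step to be the main obstacle. The exact identification of the \(\beta\)-quotient with pairs \((\boldsymbol b_1,\ldots,\boldsymbol b_{d-1}\bmod\nu,\ \boldsymbol c\in\{0,1\})\), together with the \(\alpha\)-quotient of index \(2^d\ell^{d-1}\), requires care: it is easy to lose a factor \(2^n\) or a sign \((-1)^{\cdot}\) there. We would check it first in the case \(\ell=2\), \(n=1\), where the identity must reduce to Jacobi's \(\vartheta_3^4=\vartheta_2^4+\vartheta_4^4\).
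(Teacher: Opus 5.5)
Your route is the paper's: the same \(A\), the same \(\widetilde A\) with last block \(E_\nu^T\), \(\nu\)-th powers, projection with \(\langle 2\boldsymbol r\boldsymbol\kappa,t\rangle_\nu\), and a final normalization of representatives. The gap is in the step you defer as bookkeeping. That is where the argument actually lives, and as written both quotients are misidentified.

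\textbf{The \(\alpha\)-quotient.} \(A[v/\nu,\ldots,v/\nu]=[Rv/\nu,\ldots,Rv/\nu]\), and \(\widetilde A\) sends this to \([0,\ldots,0,(R-I_d)^{-1}Rv]\). So the \(\alpha\)-quotient is parametrized by \(v\) with \(\boldsymbol1^Tv\equiv0\pmod\ell\), not \(\pmod\nu\). The reason is that the index of \((R-I_d)\numZ^d\) is \(|\det(R-I_d)|=\ell\), whatever the number of factors.

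\textbf{The \(\beta\)-quotient.} From \(E_\nu^T\numZ^d=\nu\numZ^d+2\numZ\boldsymbol1\), the \(\beta\)-quotient is \((\numZ/\nu\numZ)^{d,n}\) modulo \(\boldsymbol1\,(2\numZ/\nu\numZ)^{1,n}\), not modulo \(\boldsymbol1(\numZ/\nu\numZ)^{1,n}\).

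Both of your groups have order \(\nu^{(d-1)n}\), whereas the correct order is \(2^n\nu^{(d-1)n}=|\det\widetilde A|^n\). This is not cosmetic. Your reason why no parity hypothesis is needed, \(\ell(\ell-1)\boldsymbol\kappa\equiv0\pmod\nu\), is false for every even \(\ell\); for \(\ell=2\) it reads \(2\kappa\equiv0\pmod4\) with \(\kappa\) odd. Also, for \(d=1\) your \(\mathcal X\)-group is \(\{0\}\), which contradicts your own remark that it becomes nontrivial. The correct reason is that membership of \(2\boldsymbol r\boldsymbol\kappa\) is only needed modulo \(\ell\), and \(\ell\mid\ell(\ell-1)\) always.

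Relatedly, you cannot restrict \(F\) to first characteristics in \(\frac1\ell\numZ\) when setting up the Fourier relation. The right-hand side of Theorem~\ref{thm:main} runs over every \(\alpha\), including odd \(y\) with characteristic \(y/\nu\). Restricting to \(\boldsymbol r\boldsymbol a/\ell\) is legitimate only after the orthogonality collapse.

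\textbf{The final normalization.} It rests on a false claim. Moving the last row of \(B\) by \(\ell\) changes \(\gamma_s\) by \(e_d/2\). But \(M_\ell e_d/2=(0,\ldots,0,-\tfrac12,1)^T\) is not integral for \(d\geq2\), so Lemma~\ref{lemm:twochar} cannot absorb this shift. What works, once you use the correct \(\beta\)-quotient, is the shift \(t\mapsto t+\boldsymbol1\boldsymbol\eta\) with \(\boldsymbol\eta\) even:
\begin{itemize}
\item it changes \(\gamma_t\) by \(M_\ell^{-1}(-e_d\boldsymbol\eta/2)\in M_\ell^{-1}\numZ^{d,n}\);
\item it changes the last row of \(B=-Lt\) by \(-d\boldsymbol\eta\);
\item since \(d\) is a unit modulo \(\ell\), \(-d\boldsymbol\eta\) sweeps out exactly \(2\numZ/\nu\numZ\).
\end{itemize}
This gives a unique last row \(\boldsymbol c\in\{0,1\}^n\), with the rows \(\boldsymbol b_1,\ldots,\boldsymbol b_{d-1}\) free modulo \(\nu\), and hence the count \(2^n\nu^{(d-1)n}\).

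The same two facts (the mod-\(\ell\) condition on \(x\) and the evenness of \(\boldsymbol\eta\)) are what make \(\zeta^{\tr(x^Ts)}\) a well-defined nondegenerate pairing. Then \(-\boldsymbol r^TL^{-1}=(1,\ldots,1,-d)\equiv\boldsymbol1^T\pmod\ell\) turns \(\langle2\boldsymbol r\boldsymbol\kappa,t\rangle_\nu\) into \(\xi^{\boldsymbol\kappa\cdot(\boldsymbol b_1+\cdots+\boldsymbol b_{d-1}+\boldsymbol c)}\). With these corrected groups, the rest of your outline goes through as in the paper.
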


As in the proof of Theorem~\ref{r: unified p power identity},
we first need to establish a character projection formula.
We write \(\zeta=\etp{1/\nu}\) and define
\begin{equation}
\label{r: doubled finite groups}
 \mathcal X_n^\dagger=
 \{x\in(\numZ/\nu\numZ)^{d,n}:
       \boldsymbol1^Tx\equiv0\pmod\ell\},
 \qquad
 \mathcal Y_n^\dagger=
 \frac{(\numZ/\nu\numZ)^{d,n}}
 {\{\boldsymbol1\boldsymbol\eta:
       \boldsymbol\eta\in(2\numZ/\nu\numZ)^{1,n}\}}.
\end{equation}
Both are finite abelian groups of order \(2^n\nu^{(d-1)n}\).
Let
\(E_\nu=\nu(R-I_d)^{-1}=2E\).  Equations
\eqref{r: auxiliary lattice identities} give
\[
 E_\nu^T\numZ^d=\nu\numZ^d+2\numZ\boldsymbol1.
\]
Reduction of the first equality in
\eqref{r: auxiliary lattice identities} modulo \(\nu\) gives
\begin{equation}
\label{eq:image-RminusId}
 (R-I_d)\cdot(\numZ/\nu\numZ)^{d,n}=
 \{x\in(\numZ/\nu\numZ)^{d,n}:
       \boldsymbol1^Tx\equiv0\pmod\ell\}
 =\mathcal X_n^\dagger.
\end{equation}
The map \(x\mapsto Rx\) is also an automorphism of
\(\mathcal X_n^\dagger\).
We define
\begin{equation}
\label{r: doubled finite pairing}
 \langle x,s\rangle_\nu=\zeta^{\tr(x^Ts)}
\end{equation}
for $(x,s)\in\mathcal X_n^\dagger\times\mathcal Y_n^\dagger$.
\begin{lemm}
\label{lemm:XndYnd}
Formula \eqref{r: doubled finite pairing} defines a well-defined
bicharacter on
\(\mathcal X_n^\dagger\times\mathcal Y_n^\dagger\).  Moreover, if
\(x\neq0\), then some \(s\in\mathcal Y_n^\dagger\) satisfies
\(\langle x,s\rangle_\nu\neq1\).
\end{lemm}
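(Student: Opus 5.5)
The plan mirrors the proof of Lemma~\ref{lemm:XnYn}, with the denominator now $2\numZ/\nu\numZ$ rather than all of $\numZ/\nu\numZ$. First define the auxiliary pairing $\langle x,y\rangle_0=\zeta^{\tr(x^Ty)}$ for $x\in\mathcal X_n^\dagger$ and $y\in(\numZ/\nu\numZ)^{d,n}$. Since $\zeta$ has order $\nu$ and all entries are read modulo $\nu$, this is a well-defined bicharacter, and it is clearly multiplicative in each argument.

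The key step is to show that $\langle x,\cdot\rangle_0$ is trivial on the subgroup $\{\boldsymbol1\boldsymbol\eta:\boldsymbol\eta\in(2\numZ/\nu\numZ)^{1,n}\}$. Write $\boldsymbol\eta=2\boldsymbol\eta'$ with $\boldsymbol\eta'\in(\numZ/\nu\numZ)^{1,n}$. Then
\[
\tr\bigl(x^T\boldsymbol1\boldsymbol\eta\bigr)=\tr\bigl((\boldsymbol1^Tx)^T\cdot 2\boldsymbol\eta'\bigr)=2\,(\boldsymbol1^Tx)\boldsymbol\eta'^T.
\]
Because $x\in\mathcal X_n^\dagger$, every entry of $\boldsymbol1^Tx$ lies in $\ell\numZ/\nu\numZ$, so this quantity lies in $2\ell\numZ/\nu\numZ=0$. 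Hence $\zeta^{\tr(x^T\boldsymbol1\boldsymbol\eta)}=1$, and $\langle x,s\rangle_\nu:=\langle x,y\rangle_0$ is independent of the representative $y$ of $s\in\mathcal Y_n^\dagger$. Bimultiplicativity is then inherited from $\langle\cdot,\cdot\rangle_0$.

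For nondegeneracy, let $x\neq0$ in $\mathcal X_n^\dagger$. Some entry $x_{ij}$ is nonzero modulo $\nu$. Take $y=E_{ij}$, the matrix unit, regarded as an element of $(\numZ/\nu\numZ)^{d,n}$. Then $\tr(x^Ty)=x_{ij}\not\equiv0\pmod\nu$, so $\zeta^{x_{ij}}\neq1$. Its class $s$ in $\mathcal Y_n^\dagger$ then satisfies $\langle x,s\rangle_\nu\neq1$.

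No step is a real obstacle. The only point needing care is the factor $2$ in the well-definedness step: it is exactly what compensates for the weaker condition $\boldsymbol1^Tx\equiv0\pmod\ell$, as opposed to $\pmod\nu$.
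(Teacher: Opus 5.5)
Your proof is correct and follows the paper's argument essentially verbatim: the paper also checks that $\tr(x^T\boldsymbol1\boldsymbol\eta)=\tr((\boldsymbol1^Tx)^T\boldsymbol\eta)\equiv0\pmod\nu$, using $\boldsymbol1^Tx\equiv0\pmod\ell$ and $\boldsymbol\eta\equiv0\pmod2$, and for nondegeneracy it takes some $y$ with $\tr(x^Ty)\not\equiv0\pmod\nu$. Your choice of a matrix unit for $y$ just makes that last step explicit.
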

\begin{proof}
If \(s\) is replaced by \(s+\boldsymbol1\boldsymbol\eta\), where
\(\boldsymbol\eta\in(2\numZ/\nu\numZ)^{1,n}\), then
\[
 \tr\!\left(x^T\boldsymbol1\boldsymbol\eta\right)
 =\tr\!\left((\boldsymbol1^Tx)^T\boldsymbol\eta\right)
 \equiv0\pmod\nu,
\]
because \(\boldsymbol1^Tx\equiv0\pmod\ell\) and
\(\boldsymbol\eta\equiv0\pmod2\).  Hence the pairing is well defined
and is a bicharacter.  If \(x\ne0\), choose
\(y\in(\numZ/\nu\numZ)^{d,n}\) such that
\(\tr(x^Ty)\not\equiv0\pmod\nu\); the class of \(y\) in
\(\mathcal Y_n^\dagger\) then has nontrivial pairing with \(x\).
\end{proof}

\begin{lemm}
\label{lemm:character-orthogonality-2l}
Suppose that \(F_{x,s}\), indexed by
\(x\in\mathcal X_n^\dagger\) and \(s\in\mathcal Y_n^\dagger\), satisfies
\begin{equation}
\label{r: abstract Fourier relation dagger}
 2^n\nu^{(d-1)n}F_{R^{-1}x,t}
 =\sum_{\substack{y\in\mathcal X_n^\dagger,\,s\in\mathcal Y_n^\dagger}}
 \langle y-x,s-t\rangle_\nu F_{y,s},
\end{equation}
for every \(x\in\mathcal X_n^\dagger\) and
\(t\in\mathcal Y_n^\dagger\).  Let
\(\boldsymbol\lambda,\boldsymbol\kappa
\in(\numZ/\ell\numZ)^{1,n}\) satisfy \(\boldsymbol\lambda\boldsymbol\kappa^T\not\equiv0\pmod\ell\).
Then
\begin{equation}
\label{r: character sum dagger}
 \sum_{\substack{\boldsymbol a\in(\numZ/\ell\numZ)^{1,n}\\
                  t\in\mathcal Y_n^\dagger}}
 \xi^{\boldsymbol\lambda\boldsymbol a^T}
 \langle2\boldsymbol r\boldsymbol\kappa,t\rangle_\nu
 F_{2\boldsymbol r\boldsymbol a,t}=0.
\end{equation}
\end{lemm}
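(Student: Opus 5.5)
The plan follows the proof of Lemma~\ref{r: character orthogonality}, adapted to the doubled groups.

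First, since $R^{-1}\boldsymbol r=\boldsymbol r-\ell e_d$, we have $R^{-1}(2\boldsymbol r\boldsymbol a)=2\boldsymbol r\boldsymbol a-\nu e_d\boldsymbol a\equiv 2\boldsymbol r\boldsymbol a\pmod\nu$. Hence $F_{R^{-1}x,t}=F_{x,t}$ for $x=2\boldsymbol r\boldsymbol a$. Note that $2\boldsymbol r\boldsymbol a$ is well defined modulo $\nu$ for $\boldsymbol a$ modulo $\ell$. Moreover, $\boldsymbol1^T(2\boldsymbol r\boldsymbol a)=\ell(\ell-1)\boldsymbol a\equiv0\pmod\ell$, so $2\boldsymbol r\boldsymbol a\in\mathcal X_n^\dagger$. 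The same computation gives $2\boldsymbol r\boldsymbol\kappa\in\mathcal X_n^\dagger$ for every $\boldsymbol\kappa$, which is why no parity hypothesis on $\boldsymbol\kappa$ is needed here.

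Next, we take \eqref{r: abstract Fourier relation dagger} with $x=2\boldsymbol r\boldsymbol a$, multiply it by $\xi^{\boldsymbol\lambda\boldsymbol a^T}\langle 2\boldsymbol r\boldsymbol\kappa,t\rangle_\nu$, and sum over $\boldsymbol a$ and $t\in\mathcal Y_n^\dagger$. Writing $S$ for the sum in \eqref{r: character sum dagger}, the left side becomes $2^n\nu^{(d-1)n}S$.

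On the right, we perform the $t$-sum first. It equals
\[
\sum_{t}\langle 2\boldsymbol r\boldsymbol\kappa-(y-2\boldsymbol r\boldsymbol a),t\rangle_\nu .
\]
By Lemma~\ref{lemm:XndYnd} (nondegeneracy in the $\mathcal X_n^\dagger$ variable) and orthogonality, this equals $|\mathcal Y_n^\dagger|=2^n\nu^{(d-1)n}$ when $y=2\boldsymbol r(\boldsymbol a+\boldsymbol\kappa)$, and $0$ otherwise. That value of $y$ lies in $\mathcal X_n^\dagger$, so it is admissible in the sum.

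The right side therefore reduces to
\[
2^n\nu^{(d-1)n}\sum_{\boldsymbol a,s}\xi^{\boldsymbol\lambda\boldsymbol a^T}\langle2\boldsymbol r\boldsymbol\kappa,s\rangle_\nu F_{2\boldsymbol r(\boldsymbol a+\boldsymbol\kappa),s}.
\]
Reindexing $\boldsymbol a\mapsto\boldsymbol a-\boldsymbol\kappa$ modulo $\ell$ shows that this equals $2^n\nu^{(d-1)n}\xi^{-\boldsymbol\lambda\boldsymbol\kappa^T}S$. Since $\xi^{-\boldsymbol\lambda\boldsymbol\kappa^T}\ne1$, we conclude $S=0$.

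The main point needing care is the orthogonality step. It requires that the pairing be nondegenerate on $\mathcal X_n^\dagger$ so that the character sum over $\mathcal Y_n^\dagger$ detects $y$ exactly; Lemma~\ref{lemm:XndYnd} supplies this. One must also confirm that the map $\boldsymbol a\mapsto 2\boldsymbol r\boldsymbol a$ is compatible with reduction modulo $\ell$ versus modulo $\nu$; this holds because $2\ell\boldsymbol r\equiv0\pmod\nu$.
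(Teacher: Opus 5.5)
Your proposal is correct and follows essentially the same route as the paper. Both arguments use $R^{-1}\boldsymbol r=\boldsymbol r-\ell e_d$ to get $F_{R^{-1}x,t}=F_{x,t}$ for $x=2\boldsymbol r\boldsymbol a$, twist the Fourier relation by $\xi^{\boldsymbol\lambda\boldsymbol a^T}\langle2\boldsymbol r\boldsymbol\kappa,t\rangle_\nu$, collapse the $t$-sum via Lemma~\ref{lemm:XndYnd} and orthogonality to $y=2\boldsymbol r(\boldsymbol a+\boldsymbol\kappa)$, and reindex to obtain $S=\xi^{-\boldsymbol\lambda\boldsymbol\kappa^T}S$; your explicit checks that $2\boldsymbol r\boldsymbol a$ is well defined modulo $\nu$ and lies in $\mathcal X_n^\dagger$ simply spell out what the paper states through the map $v\mapsto 2v$.
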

\begin{proof}
Since \(R^{-1}\boldsymbol r=\boldsymbol r-\ell e_d\), we have
\(F_{2R^{-1}\boldsymbol r\boldsymbol a,t}
=F_{2\boldsymbol r\boldsymbol a,t}\).  Here
\(2\boldsymbol r\boldsymbol a\) and
\(2\boldsymbol r\boldsymbol\kappa\) are regarded as elements of
\(\mathcal X_n^\dagger\) via the map
\((\numZ/\ell\numZ)^{d,n}\rightarrow
(2\numZ/\nu\numZ)^{d,n}\), \(v\mapsto2v\).
We multiply \eqref{r: abstract Fourier relation dagger}, with
\(x=2\boldsymbol r\boldsymbol a\), by
\(\xi^{\boldsymbol\lambda\boldsymbol a^T}
\langle2\boldsymbol r\boldsymbol\kappa,t\rangle_\nu\) and sum over
\(\boldsymbol a\) and \(t\).  Lemma~\ref{lemm:XndYnd} and character
orthogonality on \(\mathcal Y_n^\dagger\) give
\[
 \sum_{t\in\mathcal Y_n^\dagger}
 \langle2\boldsymbol r\boldsymbol\kappa,t\rangle_\nu\langle y-2\boldsymbol r\boldsymbol a,-t\rangle_\nu=
 \sum_{t\in\mathcal Y_n^\dagger}
 \langle2\boldsymbol r\boldsymbol\kappa-
 (y-2\boldsymbol r\boldsymbol a),t\rangle_\nu
 =\begin{cases}
   2^n\nu^{(d-1)n},&
   y-2\boldsymbol r\boldsymbol a
      =2\boldsymbol r\boldsymbol\kappa,\\
   0,&\text{otherwise}.
  \end{cases}
\]
The value \(y=2\boldsymbol r(\boldsymbol a+\boldsymbol\kappa)\)
lies in \(\mathcal X_n^\dagger\).  If \(S\) denotes the sum in
\eqref{r: character sum dagger}, the left-hand side of the summed
Fourier relation is \(2^n\nu^{(d-1)n}S\), whereas its right-hand side
is
\[
 2^n\nu^{(d-1)n}
 \sum_{\boldsymbol a,s}
 \xi^{\boldsymbol\lambda\boldsymbol a^T}
 \langle2\boldsymbol r\boldsymbol\kappa,s\rangle_\nu
 F_{2\boldsymbol r(\boldsymbol a+\boldsymbol\kappa),s}
 =2^n\nu^{(d-1)n}
  \xi^{-\boldsymbol\lambda\boldsymbol\kappa^T}S.
\]
Since
\(\xi^{-\boldsymbol\lambda\boldsymbol\kappa^T}\ne1\), it
follows that \(S=0\).
\end{proof}

\begin{proof}[Proof of Theorem~\ref{r: doubled order power identity}]

For \(x\in\mathcal X_n^\dagger\) and
\(s\in\mathcal Y_n^\dagger\), we choose integral lifts and define
\begin{equation}
\label{eq:Fxs-2ell}
 \gamma_s=-\frac{Ls}{\nu},
 \qquad
 F_{x,s}=\Theta_{M_\ell,n}^{\,\nu}
 \!\left[\frac{x}{\nu},\gamma_s\right](\tau,z).
\end{equation}
This is well defined.  Indeed, replacing \(s\) by
\(s+\boldsymbol1\boldsymbol\eta\), where $\boldsymbol\eta\in(2\numZ)^{1,n}$,
changes \(\gamma_s\) by \(-\boldsymbol r\boldsymbol\eta/\nu
=M_\ell^{-1}(-e_d\boldsymbol\eta/2)\).  By
Lemma~\ref{lemm:twochar}, the corresponding factor in a single theta
series is
\(\etp{-\frac{1}{2\nu}\tr(x^Te_d\boldsymbol\eta)}\), whose \(\nu\)-th power is
one.  Changing an integral lift of \(x\) shifts the first
characteristic by an integral matrix.  Changing an integral lift of
\(s\) by \(\nu k\), with \(k\in\numZ^{d,n}\), changes \(\gamma_s\)
by \(-Lk\).  By Lemma~\ref{lemm:twochar}, the latter change multiplies
a single theta series by
\[
 \zeta^{-\tr(x^TM_\ell Lk)},
\]
whose \(\nu\)-th power is one.
Thus \(F_{x,s}\) is independent of all choices.

We apply Theorem~\ref{thm:main} with
\[
 m=d,\qquad \mu=\nu,\qquad M_1=M_2=M_\ell,
 \qquad
 A=I_{d\nu}+\frac1\nu J_\nu\otimes(R-I_d),
\]
where \(J_\nu\) is the \(\nu\times\nu\) all-ones matrix.
A direct calculation gives
\(A^TM_\ell^{(\nu)}A=M_\ell^{(\nu)}\), as in the proof of
Theorem~\ref{r: unified p power identity}.
The integral matrix \(\widetilde{A}\) is defined by
\[
 \widetilde{A}\cdot(x_1,\ldots,x_\nu)^T
 =(x_1-x_\nu,\ldots,x_{\nu-1}-x_\nu,x_\nu E_\nu^T)^T,\qquad
 \text{for all } x_1,\dots,x_\nu\in\numZ^{1,d}.
\]
For \(h=[h_1,\ldots,h_\nu]\in\numZ^{d\nu,n}\), we have
\[
 Ah-h=
 \left[\frac{(R-I_d)u}{\nu},\ldots,
       \frac{(R-I_d)u}{\nu}\right],
 \qquad u=h_1+\cdots+h_\nu,
\]
and every \(u\in\numZ^{d,n}\) occurs in this way.  On the other
hand, \(\widetilde{A}v\) is integral precisely when all blocks of \(v\) are
congruent modulo \(\numZ^{d,n}\) and their common class belongs to
\(E_\nu^{-1}\numZ^{d,n}=\nu^{-1}(R-I_d)\numZ^{d,n}\).  Consequently,
\[
 \numZ^{d\nu,n}+A\numZ^{d\nu,n}
 =\widetilde{A}^{-1}\numZ^{d\nu,n}
 =\numZ^{d\nu,n}+
 \left\{\left[\frac{(R-I_d)u}{\nu},\ldots,
                    \frac{(R-I_d)u}{\nu}\right]:
              u\in\numZ^{d,n}\right\}.
\]
Thus all the hypotheses of Theorem~\ref{thm:main} are satisfied.
Moreover, \(|\det \widetilde{A}|=|\det E_\nu|=2^d\ell^{d-1}=2\nu^{d-1}\).

The two quotient groups in Theorem~\ref{thm:main} are parametrized
by \(\mathcal X_n^\dagger\) and \(\mathcal Y_n^\dagger\), respectively.
More precisely, the first parametrization is the group isomorphism
\[
 \mathcal X_n^\dagger\longrightarrow
 \frac{(\widetilde{A}A)^{-1}\numZ^{d\nu,n}}{\numZ^{d\nu,n}},
 \qquad
 v\longmapsto
 \left[\frac v\nu,\ldots,\frac v\nu\right]+\numZ^{d\nu,n},
\]
and the second parametrization is
\[
 \mathcal Y_n^\dagger\longrightarrow
 \frac{A^{-1}{M_\ell^{(\nu)}}^{-1}\numZ^{d\nu,n}}
 {A^{-1}{M_\ell^{(\nu)}}^{-1}\widetilde{A}^T\numZ^{d\nu,n}},
 \qquad u\longmapsto\beta(u)+A^{-1}{M_\ell^{(\nu)}}^{-1}\widetilde{A}^T\numZ^{d\nu,n},
\]
where
\[
 \beta(u)=[\gamma_u,\ldots,\gamma_u,
   \gamma_u+M_\ell^{-1}u].
\]
The proof that these maps are group isomorphisms is analogous to the
proof for the maps from \(\mathcal X_n\) and \(\mathcal Y_n\) in
Theorem~\ref{r: unified p power identity}.

For fixed \(x\in\mathcal X_n^\dagger\) and
\(t\in\mathcal Y_n^\dagger\), we specialize the parameters in
Theorem~\ref{thm:main} to
\[
 a_i=\frac{x}{\nu},\qquad b_i=R\gamma_t,\qquad z_i=Rz
 \qquad(1\leq i\leq\nu).
\]
Since \(A\) acts as \(R\) on the diagonal subspace, the transformed
parameters are
\[
 a_i'=\frac{R^{-1}x}{\nu},\qquad b_i'=\gamma_t,
 \qquad w_i=z \qquad(1\leq i\leq\nu).
\]
Because \(R^{-1}x\in\mathcal X_n^\dagger\), taking
\(v=y-R^{-1}x\) in the first parametrization gives the representatives
\[
 \left[\frac{y-R^{-1}x}{\nu},\ldots,
       \frac{y-R^{-1}x}{\nu}\right],
 \qquad y\in\mathcal X_n^\dagger.
\]
For the representative indexed by \(u\), write \(s=t+u\), or
equivalently \(u=s-t\), in \(\mathcal Y_n^\dagger\).  The identity
\(I_d-M_\ell L=R^T\) gives
\[
 M_\ell(\nu\gamma_u+M_\ell^{-1}u)=R^Tu.
\]
The exterior factor in \eqref{eq:mainIdentity} is therefore
\(\zeta^{-\tr(x^Tu)}\).  Removing \(M_\ell^{-1}u\) from the second
characteristic in the last theta factor contributes
\(\zeta^{\tr(y^Tu)}\).  Their product is
\(\langle y-x,s-t\rangle_\nu\).  The left-hand side is
\(2^n\nu^{(d-1)n}F_{R^{-1}x,t}\), since
\[
 \Theta_{M_\ell,n}[v,Rw](\tau,Rz)
 =\Theta_{M_\ell,n}[R^{-1}v,w](\tau,z).
\]
Thus Theorem~\ref{thm:main} yields
\begin{equation}
\label{r: doubled finite Fourier relation}
 2^n\nu^{(d-1)n}F_{R^{-1}x,t}
 =\sum_{\substack{y\in\mathcal X_n^\dagger,\,
                   s\in\mathcal Y_n^\dagger}}
 \langle y-x,s-t\rangle_\nu F_{y,s},
\end{equation}
which is precisely the relation in
\eqref{r: abstract Fourier relation dagger}.
Lemma~\ref{lemm:character-orthogonality-2l} therefore gives
\eqref{r: character sum dagger} for the \(F_{x,s}\) defined in
\eqref{eq:Fxs-2ell}.

It remains to choose coordinates on \(\mathcal Y_n^\dagger\).  The
unimodularity of \(L\) identifies a lift of \(t\) with
\(B=-Lt\) modulo \(\nu\).  Replacing \(t\) by
\(t+\boldsymbol1\boldsymbol\eta\) replaces \(B\) by
\(B-\boldsymbol r\boldsymbol\eta\).  Since
\(d=\ell-1\) is invertible modulo \(\ell\), every class has a
unique representative for which the last row of \(B\) belongs to
\(\{0,1\}^n\).  The remaining \(d-1\) rows range independently over
\((\numZ/\nu\numZ)^n\); hence these representatives have the form
\[
 B=[\boldsymbol b_1,\ldots,\boldsymbol b_{d-1},\boldsymbol c]
\]
used in the statement.  Finally,
\(-\boldsymbol r^TL^{-1}=(1,\ldots,1,-d)\equiv\boldsymbol1^T
\pmod\ell\), and therefore
\[
 \langle2\boldsymbol r\boldsymbol\kappa,t\rangle_\nu
 =\xi^{\boldsymbol\kappa\cdot
   (\boldsymbol b_1+\cdots+\boldsymbol b_{d-1}+\boldsymbol c)}.
\]
Combining this equation with \eqref{r: character sum dagger} and
\eqref{eq:Fxs-2ell} yields \eqref{r: doubled order power formula}.
\end{proof}

\subsection{Quartic identities}

We now specialize to \(\ell=2\), so that \(d=1\) and \(M_2=(2)\).
Let \(\F_2=\numZ/2\numZ\).  For
\(\boldsymbol p,\boldsymbol q\in\F_2^n\), we define
\begin{equation}
\label{r: indexed quartic theta}
 \Phi^{(n)}_{\boldsymbol p,\boldsymbol q}(\tau,z)
 :=\Theta_{(2),n}
 \!\left[\frac{\boldsymbol p}{2},
          \frac{\boldsymbol q}{4}\right](\tau,z),
 \qquad
 \phi^{(n)}_{\boldsymbol p,\boldsymbol q}(\tau)
 :=\Phi^{(n)}_{\boldsymbol p,\boldsymbol q}(\tau,0).
\end{equation}
Changing the lift of \(\boldsymbol p\) has no effect, whereas changing
the lift of \(\boldsymbol q\) to \(\boldsymbol q+2\boldsymbol k\),
with \(\boldsymbol k\in\numZ^n\), multiplies the series by
\((-1)^{\boldsymbol p\cdot\boldsymbol k}\), by
Lemma~\ref{lemm:twochar}.  Thus the fourth powers are independent of
the choice of representatives.

\begin{coro}
Let \(n\geq1\), \(\tau\in\mathfrak H_n\), and
\(z\in\numC^{1,n}\).  If
\(\boldsymbol\lambda,\boldsymbol\kappa\in\F_2^n\) satisfy
\(\boldsymbol\lambda\cdot\boldsymbol\kappa=1\), then
\begin{equation}
\label{r: quartic character formula}
 \sum_{\boldsymbol p,\boldsymbol q\in\F_2^n}
 (-1)^{\boldsymbol\lambda\cdot\boldsymbol p+
       \boldsymbol\kappa\cdot\boldsymbol q}
 \Phi^{(n)}_{\boldsymbol p,\boldsymbol q}(\tau,z)^4=0.
\end{equation}
Moreover,
\begin{equation}
\label{r: quartic theta nullwert vanishing}
 \boldsymbol p\cdot\boldsymbol q=1
 \quad\Longrightarrow\quad
 \phi^{(n)}_{\boldsymbol p,\boldsymbol q}(\tau)=0.
\end{equation}
\end{coro}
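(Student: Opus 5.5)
The first assertion \eqref{r: quartic character formula} should be the case \(\ell=2\) of Theorem~\ref{r: doubled order power identity}, and the second should follow from a sign-change symmetry, as in Lemma~\ref{lemm:cubic-vanishing-symmetry}.

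For \eqref{r: quartic character formula}, I set \(\ell=2\) in Theorem~\ref{r: doubled order power identity}. Then \(d=1\), \(M_\ell=(2)\), \(\xi=-1\), \(\boldsymbol r=(1)\) and \(\nu=4\). The list \(\boldsymbol b_1,\ldots,\boldsymbol b_{d-1}\) is empty, so \(B=\boldsymbol c\) with \(\boldsymbol c\in\{0,1\}^n\). The characteristics become \(\boldsymbol r\boldsymbol a/\ell=\boldsymbol a/2\) and \(B/(2\ell)=\boldsymbol c/4\). The hypothesis \(\boldsymbol\lambda\cdot\boldsymbol\kappa\not\equiv0\pmod 2\) is exactly \(\boldsymbol\lambda\cdot\boldsymbol\kappa=1\). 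Renaming \(\boldsymbol a=\boldsymbol p\) and \(\boldsymbol c=\boldsymbol q\), formula \eqref{r: doubled order power formula} becomes
\[
\sum_{\boldsymbol p,\boldsymbol q\in\F_2^n}(-1)^{\boldsymbol\lambda\cdot\boldsymbol p+\boldsymbol\kappa\cdot\boldsymbol q}\,\Phi^{(n)}_{\boldsymbol p,\boldsymbol q}(\tau,z)^4=0 .
\]
Here the \(0,1\) lifts of \(\boldsymbol p\) and \(\boldsymbol q\) may be used, since the fourth powers do not depend on the lifts, as noted after \eqref{r: indexed quartic theta}. The only thing to check is that the exponent of \(\xi\) and the representative sets line up with the \(\F_2^n\) indexing, and this is immediate.

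For \eqref{r: quartic theta nullwert vanishing}, I mimic the proof of Lemma~\ref{lemm:cubic-vanishing-symmetry} using \(R=(-1)\), which satisfies \(R^T(2)R=(2)\). The substitution \(g\mapsto-g\) in \eqref{eq:SiegelTheta} gives \(\Theta_{(2),n}[a,b](\tau,z)=\Theta_{(2),n}[-a,-b](\tau,-z)\). At \(z=0\) this yields
\[
\phi^{(n)}_{\boldsymbol p,\boldsymbol q}=\theta_{(2),n}[-\boldsymbol p/2,-\boldsymbol q/4].
\]
The restriction to \(z=0\) is exactly why the vanishing statement is only for the theta constants.

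Next I apply Lemma~\ref{lemm:twochar} with \(a_1=-\boldsymbol p/2\), \(b_1=-\boldsymbol q/4\), \(a_2=\boldsymbol p/2\) and \(b_2=\boldsymbol q/4\). Its hypotheses hold because \(a_2-a_1=\boldsymbol p\) is integral and \(b_2-b_1=\boldsymbol q/2\in(2)^{-1}\numZ^{1,n}\). The lemma gives the factor
\[
\etp{\tr\bigl((-\boldsymbol p/2)^T\cdot2\cdot\boldsymbol q/2\bigr)}=(-1)^{\boldsymbol p\cdot\boldsymbol q}.
\]
Combining the two steps gives \(\phi^{(n)}_{\boldsymbol p,\boldsymbol q}=(-1)^{\boldsymbol p\cdot\boldsymbol q}\phi^{(n)}_{\boldsymbol p,\boldsymbol q}\), so the theta constant vanishes when \(\boldsymbol p\cdot\boldsymbol q=1\).

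The main point needing care is the bookkeeping of the phase. The factor \((-1)^{\boldsymbol p\cdot\boldsymbol q}\) must come out using a fixed integral lift of \(\boldsymbol p\) and \(\boldsymbol q\) on both sides. This is legitimate because the vanishing statement is lift-independent: different lifts change the series only by a sign.
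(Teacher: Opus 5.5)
Your proposal is correct and follows the paper's proof: the character formula is exactly the $\ell=2$ specialization of Theorem~\ref{r: doubled order power identity} with $\boldsymbol a=\boldsymbol p$ and $\boldsymbol c=\boldsymbol q$. For the vanishing, the paper pairs terms of the defining series via the fixed-point-free involution $g\mapsto -g-\boldsymbol p$, each pair having ratio $(-1)^{\boldsymbol p\cdot\boldsymbol q}$; this is your substitution $g\mapsto -g$ composed with the shift in Lemma~\ref{lemm:twochar}, so the two arguments coincide.
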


\begin{proof}
Equation~\eqref{r: quartic character formula} follows from
Theorem~\ref{r: doubled order power identity} upon specializing to
\(\ell=2\), with
\(\boldsymbol a=\boldsymbol p\) and
\(\boldsymbol c=\boldsymbol q\).  For \eqref{r: quartic theta nullwert vanishing},
the involution \(g\mapsto-g-\boldsymbol p\) on the defining series
has no fixed point.  Indeed,
\(\boldsymbol p\cdot\boldsymbol q=1\) implies \(p_i=1\) for some
\(i\), whereas a fixed point would satisfy \(2g_i=-1\).  The ratio
of the two terms in each pair is
\((-1)^{\boldsymbol p\cdot\boldsymbol q}=-1\), so the paired terms
cancel.
\end{proof}

In degree one, \eqref{r: quartic character formula} gives an
elliptic-variable form of Jacobi's quartic identity.

\begin{coro}
For \(\tau\in\mathfrak H_1\) and \(z\in\numC\),
\begin{equation}
\label{r: Jacobi quartic formula with variable}
 \Phi^{(1)}_{0,0}(\tau,z)^4+
 \Phi^{(1)}_{1,1}(\tau,z)^4
 =\Phi^{(1)}_{1,0}(\tau,z)^4+
  \Phi^{(1)}_{0,1}(\tau,z)^4.
\end{equation}
At \(z=0\), this becomes
\begin{equation}
\label{r: Jacobi quartic theta constant formula}
 \phi^{(1)}_{0,0}(\tau)^4
 =\phi^{(1)}_{1,0}(\tau)^4+\phi^{(1)}_{0,1}(\tau)^4.
\end{equation}
\end{coro}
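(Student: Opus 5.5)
We specialize the preceding corollary, \eqref{r: quartic character formula}, to \(n=1\), where \(\F_2^1=\{0,1\}\). The only pair with \(\boldsymbol\lambda\cdot\boldsymbol\kappa=1\) is \(\boldsymbol\lambda=\boldsymbol\kappa=1\). For this pair the sign \((-1)^{p+q}\) equals \(+1\) for \((p,q)\in\{(0,0),(1,1)\}\) and \(-1\) for \((p,q)\in\{(1,0),(0,1)\}\). The corollary therefore gives
\[
\Phi^{(1)}_{0,0}(\tau,z)^4-\Phi^{(1)}_{0,1}(\tau,z)^4-\Phi^{(1)}_{1,0}(\tau,z)^4+\Phi^{(1)}_{1,1}(\tau,z)^4=0.
\]
Rearranging yields \eqref{r: Jacobi quartic formula with variable}.

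The lifts of \(p,q\) to \(\{0,1\}\) cause no ambiguity. The fourth powers are independent of the representatives, as noted right after \eqref{r: indexed quartic theta}, so the identity holds for the functions as defined.

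For the theta-constant version we set \(z=0\) and use \eqref{r: quartic theta nullwert vanishing}. Since \(1\cdot1=1\), we get \(\phi^{(1)}_{1,1}(\tau)=0\), and so \eqref{r: Jacobi quartic theta constant formula} follows at once.

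Nothing here should be an obstacle, since the statement is a direct specialization. The only point worth a remark is that these are the classical constants. One checks \(\phi^{(1)}_{0,0}(\tau)=\vartheta_3(q)\), \(\phi^{(1)}_{1,0}(\tau)=\vartheta_2(q)\) and \(\phi^{(1)}_{0,1}(\tau)=\vartheta_4(q)\) with \(q=\etp{\tau}\). This comes from expanding \eqref{eq:SiegelTheta} with \(M=(2)\), which gives \(\etp{\tau(g+p/2)^2}\) times the character \(\etp{(g+p/2)q/2}\); the latter is \((-1)^g\) when \(p=0\) and \(q=1\). This recovers Jacobi's identity.
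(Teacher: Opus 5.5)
Your proposal is correct and is essentially the paper's own proof. The paper likewise specializes \eqref{r: quartic character formula} to \(n=1\) and \(\lambda=\kappa=1\), rearranges the resulting four-term identity, and sets \(z=0\), where \eqref{r: quartic theta nullwert vanishing} gives \(\phi^{(1)}_{1,1}=0\).
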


\begin{proof}
For \(n=1\) and \(\lambda=\kappa=1\), the four terms in
\eqref{r: quartic character formula} are
\[
 \Phi^{(1)}_{0,0}(\tau,z)^4-
 \Phi^{(1)}_{0,1}(\tau,z)^4-
 \Phi^{(1)}_{1,0}(\tau,z)^4+
 \Phi^{(1)}_{1,1}(\tau,z)^4=0.
\]
Rearrangement gives \eqref{r: Jacobi quartic formula with variable}.
At \(z=0\), \eqref{r: quartic theta nullwert vanishing} gives
\(\phi^{(1)}_{1,1}=0\), and hence
\eqref{r: Jacobi quartic theta constant formula}.
\end{proof}

For an explicit \(q\)-series normalization, write
\(q=\etp{\tau}\) and \(u=\etp{2z}\), with
\(u^\alpha=\etp{2\alpha z}\).  Then
\eqref{r: Jacobi quartic formula with variable} is
\begin{equation}
\label{r: Jacobi quartic q series formula}
\begin{aligned}
&\left(\sum_{k\in\numZ}q^{k^2}u^k\right)^4
 +\left(\sum_{k\in\numZ}(-1)^k
        q^{(k+1/2)^2}u^{k+1/2}\right)^4\\
={}&\left(\sum_{k\in\numZ}
        q^{(k+1/2)^2}u^{k+1/2}\right)^4
 +\left(\sum_{k\in\numZ}(-1)^kq^{k^2}u^k\right)^4.
\end{aligned}
\end{equation}
Here \(\Phi^{(1)}_{1,1}\) is \(\rmi\) times the corresponding
displayed series, and \(\rmi^4=1\).  Chan, Chan, Liu, and
Zudilin~\cite[Theorem~1.2]{CCLZ26} proved an identity whose
specialization \(x=y=u\) gives
\eqref{r: Jacobi quartic q series formula}.
Jacobi~\cite[p.~90, Eq.~(14)]{Jac12} gives the classical
theta-constant identity \eqref{r: Jacobi quartic theta constant formula}.
In \(q\)-series notation, this is
\begin{equation*}
 \left(\sum_{k\in\numZ}q^{k^2}\right)^4
 =\left(\sum_{k\in\numZ}q^{(k+1/2)^2}\right)^4
  +\left(\sum_{k\in\numZ}(-1)^kq^{k^2}\right)^4.
\end{equation*}

We finally specialize to degree two.  We write
\[
 \tau=\begin{pmatrix}\tau_1&\tau_0\\\tau_0&\tau_2\end{pmatrix}
 \in\mathfrak H_2,
 \qquad Q=\etp{\tau_1},\quad S=\etp{\tau_0},\quad
 T=\etp{\tau_2}.
\]
For binary representatives
\(\boldsymbol p=(p_1,p_2)\) and
\(\boldsymbol q=(q_1,q_2)\), the definition
\eqref{r: indexed quartic theta} gives
\begin{equation}
\label{r: degree two quartic q series}
 \phi^{(2)}_{\boldsymbol p,\boldsymbol q}(\tau)
 =\rmi^{p_1q_1+p_2q_2}
  \sum_{j,k\in\numZ}(-1)^{q_1j+q_2k}
  Q^{(j+p_1/2)^2}
  S^{2(j+p_1/2)(k+p_2/2)}
  T^{(k+p_2/2)^2}.
\end{equation}
We use the abbreviations
\[
 \boldsymbol0=(0,0),\qquad
 \boldsymbol e_1=(1,0),\qquad
 \boldsymbol e_2=(0,1),\qquad
 \boldsymbol e=(1,1)
 \quad\text{in }\F_2^2.
\]
The ten pairs with
\(\boldsymbol p\cdot\boldsymbol q=0\) are
\begin{equation}
\label{r: degree two quartic even pairs}
\begin{gathered}
 (\boldsymbol p,\boldsymbol q)=(\boldsymbol0,\boldsymbol0),
 (\boldsymbol0,\boldsymbol e_1),
 (\boldsymbol0,\boldsymbol e_2),
 (\boldsymbol0,\boldsymbol e),
 (\boldsymbol e_1,\boldsymbol0),
 (\boldsymbol e_1,\boldsymbol e_2),\\
 (\boldsymbol e_2,\boldsymbol0),
 (\boldsymbol e_2,\boldsymbol e_1),
 (\boldsymbol e,\boldsymbol0),
 (\boldsymbol e,\boldsymbol e).
\end{gathered}
\end{equation}
The relations in the following theorem are the classical genus-two
quartic relations; see Cl\'ery, van der Geer, and
Grushevsky~\cite{CGG15}.

\begin{thm}
The fourth powers of the theta series defined by
\eqref{r: degree two quartic q series} and indexed by
\eqref{r: degree two quartic even pairs} satisfy the following five
independent relations:
\begin{align*}
 \phi_{\boldsymbol0,\boldsymbol0}^4
 &=\phi_{\boldsymbol0,\boldsymbol e_2}^4
  +\phi_{\boldsymbol e_2,\boldsymbol0}^4
  +\phi_{\boldsymbol e,\boldsymbol e}^4,\\
 \phi_{\boldsymbol0,\boldsymbol0}^4
 &=\phi_{\boldsymbol0,\boldsymbol e_2}^4
  +\phi_{\boldsymbol e_2,\boldsymbol e_1}^4
  +\phi_{\boldsymbol e,\boldsymbol0}^4,\\
 \phi_{\boldsymbol0,\boldsymbol0}^4
 &=\phi_{\boldsymbol0,\boldsymbol e_1}^4
  +\phi_{\boldsymbol e_1,\boldsymbol0}^4
  +\phi_{\boldsymbol e,\boldsymbol e}^4,\\
 \phi_{\boldsymbol0,\boldsymbol0}^4
 &=\phi_{\boldsymbol0,\boldsymbol e_1}^4
  +\phi_{\boldsymbol e_1,\boldsymbol e_2}^4
  +\phi_{\boldsymbol e,\boldsymbol0}^4,\\
 \phi_{\boldsymbol0,\boldsymbol0}^4
 &=\phi_{\boldsymbol0,\boldsymbol e}^4
  +\phi_{\boldsymbol e_2,\boldsymbol e_1}^4
  +\phi_{\boldsymbol e_1,\boldsymbol0}^4.
\end{align*}
(We suppress the superscript \(^{(2)}\) and the argument \(\tau\) in
this display.)
These relations form a basis of all linear relations among the ten
fourth powers.
\end{thm}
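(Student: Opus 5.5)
The plan is to specialize the quartic character formula \eqref{r: quartic character formula} to \(n=2\) and \(z=0\). This gives one linear relation among the sixteen fourth powers \(\phi_{\boldsymbol p,\boldsymbol q}^4\) for each pair \(\boldsymbol\lambda,\boldsymbol\kappa\in\F_2^2\) with \(\boldsymbol\lambda\cdot\boldsymbol\kappa=1\). By \eqref{r: quartic theta nullwert vanishing}, the six odd pairs drop out. Each such relation therefore becomes
\[
\sum_{(\boldsymbol p,\boldsymbol q)\ \text{even}}(-1)^{\boldsymbol\lambda\cdot\boldsymbol p+\boldsymbol\kappa\cdot\boldsymbol q}\phi_{\boldsymbol p,\boldsymbol q}^4=0,
\]
a linear form in the ten fourth powers indexed by \eqref{r: degree two quartic even pairs}.

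There are six pairs with \(\boldsymbol\lambda\cdot\boldsymbol\kappa=1\) in \(\F_2^2\): \((\boldsymbol e_1,\boldsymbol e_1)\), \((\boldsymbol e_1,\boldsymbol e)\), \((\boldsymbol e,\boldsymbol e_1)\), \((\boldsymbol e_2,\boldsymbol e_2)\), \((\boldsymbol e_2,\boldsymbol e)\) and \((\boldsymbol e,\boldsymbol e_2)\). I will write out the sign vector of each on the ten even characteristics. Each resulting relation has the shape (sum of five terms with sign \(+\)) \(=\) (sum of five with sign \(-\)). I then take suitable sums and differences of pairs of them so as to isolate \(\phi_{\boldsymbol0,\boldsymbol0}^4\) minus three other terms. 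For example, \((\boldsymbol e_2,\boldsymbol e_2)\) gives
\[
\phi_{\boldsymbol0,\boldsymbol0}^4+\phi_{\boldsymbol0,\boldsymbol e_1}^4+\phi_{\boldsymbol e_1,\boldsymbol0}^4+\phi_{\boldsymbol e_1,\boldsymbol e_2}^4+\phi_{\boldsymbol e,\boldsymbol e}^4
=\phi_{\boldsymbol0,\boldsymbol e_2}^4+\phi_{\boldsymbol0,\boldsymbol e}^4+\phi_{\boldsymbol e_2,\boldsymbol0}^4+\phi_{\boldsymbol e_2,\boldsymbol e_1}^4+\phi_{\boldsymbol e,\boldsymbol0}^4,
\]
and similar combinations should produce each of the five displayed relations. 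Since the six character relations span at most a six-dimensional space and we need five, I will verify that the five stated relations lie in the span of these six by exhibiting explicit combinations. The key point is that the sum of two rows whose signs agree on a half of the entries halves to a four-term relation. Independence follows from a nonzero \(5\times5\) minor; for instance, the columns for \(\phi_{\boldsymbol0,\boldsymbol e_2},\phi_{\boldsymbol e_2,\boldsymbol0},\phi_{\boldsymbol0,\boldsymbol e_1},\phi_{\boldsymbol e_1,\boldsymbol0},\phi_{\boldsymbol0,\boldsymbol e}\) should work.

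For the basis claim, I would show that the ten fourth powers span a space of dimension exactly five, so any relation lies in the span of the five. The upper bound of five on the dimension comes from our relations. For the lower bound, I would compare with Cl\'ery--van der Geer--Grushevsky~\cite[Eq.~(6.1)]{CGG15}, who state that the relations among the even fourth powers in genus two form a five-dimensional space. Alternatively, one can extract low-order coefficients of \eqref{r: degree two quartic q series} (in \(Q,S,T\)) for five of the series and check that a \(5\times5\) coefficient matrix is nonsingular, as done in the cubic case.

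The main obstacle is bookkeeping. First, there are the phases \(\rmi^{\boldsymbol p\cdot\boldsymbol q}\) and the lift dependence; these are harmless, since fourth powers kill them. Second, one has to find the precise combinations of the six character relations that give the stated forms, since a naive single character relation has five terms per side rather than the stated \(1=3\) shape. One also has to make sure the six relations are not all independent, which would contradict the dimension count. Indeed, one linear dependency among them must exist, and checking this confirms the count.
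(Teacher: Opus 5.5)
Your route is the paper's. You specialize \eqref{r: quartic character formula} to $n=2$, $z=0$, and discard the six odd constants by \eqref{r: quartic theta nullwert vanishing}. You then obtain each displayed relation as a half-sum of two of the six character relations, check independence with a minor, and take the basis statement from the five-dimensionality in \cite{CGG15}. Your minor (columns $\phi_{\boldsymbol0,\boldsymbol e_2}^4,\phi_{\boldsymbol e_2,\boldsymbol0}^4,\phi_{\boldsymbol0,\boldsymbol e_1}^4,\phi_{\boldsymbol e_1,\boldsymbol0}^4,\phi_{\boldsymbol0,\boldsymbol e}^4$) is block diagonal with determinant $\pm1$, so that part is fine.

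The problem is the sign bookkeeping, which is the only real content of the combination step, and the one relation you wrote out is wrong. For $(\boldsymbol\lambda,\boldsymbol\kappa)=(\boldsymbol e_2,\boldsymbol e_2)$ the sign of $(\boldsymbol e_1,\boldsymbol e_2)$ is $(-1)^{0+1}=-1$, so the relation reads
\begin{multline*}
\phi_{\boldsymbol0,\boldsymbol0}^4+\phi_{\boldsymbol0,\boldsymbol e_1}^4+\phi_{\boldsymbol e_1,\boldsymbol0}^4+\phi_{\boldsymbol e,\boldsymbol e}^4\\
=\phi_{\boldsymbol0,\boldsymbol e_2}^4+\phi_{\boldsymbol0,\boldsymbol e}^4+\phi_{\boldsymbol e_2,\boldsymbol0}^4+\phi_{\boldsymbol e_2,\boldsymbol e_1}^4+\phi_{\boldsymbol e_1,\boldsymbol e_2}^4+\phi_{\boldsymbol e,\boldsymbol0}^4.
\end{multline*}
Your version is false. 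As $T\to0$, the series with $p_2=1$ tend to $0$, $\phi_{\boldsymbol0,(q_1,q_2)}\to\phi^{(1)}_{0,q_1}(\tau_1)$, and both $\phi_{\boldsymbol e_1,\boldsymbol0}$ and $\phi_{\boldsymbol e_1,\boldsymbol e_2}$ tend to $\phi^{(1)}_{1,0}(\tau_1)$. Your relation would then give $2\phi^{(1)}_{1,0}(\tau_1)^4=0$.

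More generally, each of the six relations has four plus signs (always including $\phi_{\boldsymbol0,\boldsymbol0}^4$) and six minus signs on the even characteristics, not five and five. The reason is that the sixteen signs split $8/8$, since the character is nontrivial. Translation by the odd vector $(\boldsymbol\kappa,\boldsymbol\lambda)$ preserves the sign and swaps parity, so the plus signs split $4/4$ between even and odd characteristics. Hence the pairs you combine must agree in exactly four of the ten positions, not half. In order, the five displayed relations are the half-sums for the pairs
\begin{itemize}
\item $\{(\boldsymbol e_2,\boldsymbol e),(\boldsymbol e,\boldsymbol e_2)\}$,
\item $\{(\boldsymbol e_2,\boldsymbol e_2),(\boldsymbol e_1,\boldsymbol e)\}$,
\item $\{(\boldsymbol e_1,\boldsymbol e),(\boldsymbol e,\boldsymbol e_1)\}$,
\item $\{(\boldsymbol e_2,\boldsymbol e),(\boldsymbol e_1,\boldsymbol e_1)\}$,
\item $\{(\boldsymbol e_1,\boldsymbol e_1),(\boldsymbol e,\boldsymbol e_2)\}$.
\end{itemize}
The single dependency you anticipate combines $(\boldsymbol e_2,\boldsymbol e_2),(\boldsymbol e_2,\boldsymbol e),(\boldsymbol e_1,\boldsymbol e_1),(\boldsymbol e_1,\boldsymbol e),(\boldsymbol e,\boldsymbol e_2),(\boldsymbol e,\boldsymbol e_1)$ with signs $+,-,-,+,-,+$. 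With the corrected table, your plan goes through.

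For the basis claim, importing \cite{CGG15} requires the exact identification of $\phi^{(2)}_{\boldsymbol p,\boldsymbol q}(\tau)$ with the classical theta constant of characteristic $\binom{\boldsymbol p}{\boldsymbol q}$ at $2\tau$, which the paper records. Your coefficient-extraction alternative also works, but it must show that five of the \emph{fourth powers} are linearly independent. That is more than the nonvanishing and distinctness check done in the cubic subsection.
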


\begin{proof}
We order the ten fourth powers according to the lexicographic list of
even characteristics used by Cl\'ery, van der Geer, and
Grushevsky~\cite[Section~3]{CGG15}:
\[
\begin{aligned}
(x_1,\ldots,x_{10})={}&(
 \phi_{\boldsymbol0,\boldsymbol0}^4,
 \phi_{\boldsymbol0,\boldsymbol e_2}^4,
 \phi_{\boldsymbol0,\boldsymbol e_1}^4,
 \phi_{\boldsymbol0,\boldsymbol e}^4,
 \phi_{\boldsymbol e_2,\boldsymbol0}^4,
 \phi_{\boldsymbol e_2,\boldsymbol e_1}^4,\\
&\hspace{36mm}
 \phi_{\boldsymbol e_1,\boldsymbol0}^4,
 \phi_{\boldsymbol e_1,\boldsymbol e_2}^4,
 \phi_{\boldsymbol e,\boldsymbol0}^4,
 \phi_{\boldsymbol e,\boldsymbol e}^4).
\end{aligned}
\]
The six admissible pairs
\((\boldsymbol\lambda,\boldsymbol\kappa)\) are, in the order used
below,
\[
 (\boldsymbol e_2,\boldsymbol e_2),\quad
 (\boldsymbol e_2,\boldsymbol e),\quad
 (\boldsymbol e_1,\boldsymbol e_1),\quad
 (\boldsymbol e_1,\boldsymbol e),\quad
 (\boldsymbol e,\boldsymbol e_2),\quad
 (\boldsymbol e,\boldsymbol e_1).
\]
After the six odd theta constants are removed by
\eqref{r: quartic theta nullwert vanishing}, the corresponding six
instances of \eqref{r: quartic character formula} have coefficient
matrix
\[
 \begin{pmatrix}
  1&-1& 1&-1&-1&-1& 1&-1&-1& 1\\
  1&-1&-1& 1&-1& 1& 1&-1&-1&-1\\
  1& 1&-1&-1& 1&-1&-1&-1&-1& 1\\
  1&-1&-1& 1& 1&-1&-1& 1&-1&-1\\
  1&-1& 1&-1&-1&-1&-1& 1& 1&-1\\
  1& 1&-1&-1&-1& 1&-1&-1& 1&-1
 \end{pmatrix}.
\]
If its rows are denoted by \(\rho_1,\ldots,\rho_6\), the coefficient
vectors of the five displayed relations are, respectively,
\[
 \frac{\rho_2+\rho_5}{2},\qquad
 \frac{\rho_1+\rho_4}{2},\qquad
 \frac{\rho_4+\rho_6}{2},\qquad
 \frac{\rho_2+\rho_3}{2},\qquad
 \frac{\rho_3+\rho_5}{2}.
\]
This proves all five relations.  Their coefficient vectors are
independent: the coefficients of
\(x_5,x_8,x_4,x_3,x_2\), in this order, successively force all five
coefficients in a linear dependence to vanish.

For comparison with the standard theta constants, we use the
normalization
\[
 \vartheta\!\left[
   \begin{matrix}\boldsymbol p\\ \boldsymbol q\end{matrix}
  \right](\Omega,0)
 :=\sum_{g\in\numZ^2}
 \exp\!\left(\pi\rmi\left\{
  (g+\boldsymbol p/2)^T\Omega(g+\boldsymbol p/2)
  +(g+\boldsymbol p/2)^T\boldsymbol q\right\}\right).
\]
Definition~\eqref{r: indexed quartic theta} then gives the exact
equality
\[
 \phi^{(2)}_{\boldsymbol p,\boldsymbol q}(\tau)
 =\vartheta\!\left[
   \begin{matrix}\boldsymbol p\\ \boldsymbol q\end{matrix}
  \right](2\tau,0).
\]
Under the ordering above, the fourth powers of these quantities are
precisely the forms \(x_i=\vartheta_{n_i}^4\) in the notation of
Cl\'ery, van der Geer,
and Grushevsky~\cite[Notation~6.1]{CGG15}, evaluated at \(2\tau\),
with no additional scalar factor.  The same authors~\cite[Section~6,
Eq.~(6.1)]{CGG15} show that these ten forms span a
five-dimensional space.  Hence their space of linear relations has
dimension five, and the five independent relations displayed in the
theorem form a basis.
\end{proof}

\bibliographystyle{plain}
\bibliography{ThetaRelations_nw}

@book {Igu72,
    AUTHOR = {Igusa, Jun-ichi},
     TITLE = {Theta functions},
    SERIES = {Die Grundlehren der mathematischen Wissenschaften, Band 194},
 PUBLISHER = {Springer-Verlag, New York-Heidelberg},
      YEAR = {1972},
     PAGES = {x+232},
   MRCLASS = {14K25 (32N05)},
  MRNUMBER = {0325625},
MRREVIEWER = {H. Klingen},
}

@book {AZ95,
    AUTHOR = {Andrianov, A. N. and Zhuravl\"{e}v, V. G.},
     TITLE = {Modular forms and {H}ecke operators},
    SERIES = {Translations of Mathematical Monographs},
    VOLUME = {145},
      NOTE = {Translated from the 1990 Russian original by Neal Koblitz},
 PUBLISHER = {American Mathematical Society, Providence, RI},
      YEAR = {1995},
     PAGES = {viii+334},
      ISBN = {0-8218-0277-1},
   MRCLASS = {11F25 (11F11 11F46 11F60)},
  MRNUMBER = {1349824},
       DOI = {10.1090/mmono/145},
       URL = {https://doi.org/10.1090/mmono/145},
}

@article {Zie89,
    AUTHOR = {Ziegler, C.},
     TITLE = {Jacobi forms of higher degree},
   JOURNAL = {Abh. Math. Sem. Univ. Hamburg},
  FJOURNAL = {Abhandlungen aus dem Mathematischen Seminar der Universit\"at
              Hamburg},
    VOLUME = {59},
      YEAR = {1989},
     PAGES = {191--224},
      ISSN = {0025-5858,1865-8784},
   MRCLASS = {11F55 (11F37 11F46)},
  MRNUMBER = {1049896},
MRREVIEWER = {Daniel\ Bump},
       DOI = {10.1007/BF02942329},
       URL = {https://doi.org/10.1007/BF02942329},
}

@article {ZWL10,
    AUTHOR = {Zhou, Haigang and Wang, Tianze and Lu, Hongwen},
     TITLE = {On dimension formulas for {J}acobi forms},
   JOURNAL = {Math. Z.},
  FJOURNAL = {Mathematische Zeitschrift},
    VOLUME = {264},
      YEAR = {2010},
    NUMBER = {3},
     PAGES = {709--725},
      ISSN = {0025-5874,1432-1823},
   MRCLASS = {11F50 (11F46)},
  MRNUMBER = {2591827},
MRREVIEWER = {Aloys\ Krieg},
       DOI = {10.1007/s00209-009-0539-2},
       URL = {https://doi.org/10.1007/s00209-009-0539-2},
}

@article {BB91,
    AUTHOR = {Borwein, J. M. and Borwein, P. B.},
     TITLE = {A cubic counterpart of {J}acobi's identity and the {AGM}},
   JOURNAL = {Trans. Amer. Math. Soc.},
  FJOURNAL = {Transactions of the American Mathematical Society},
    VOLUME = {323},
      YEAR = {1991},
    NUMBER = {2},
     PAGES = {691--701},
      ISSN = {0002-9947,1088-6850},
   MRCLASS = {33C75 (11F11 11Y60 33C05)},
  MRNUMBER = {1010408},
MRREVIEWER = {Bruce\ C.\ Berndt},
       DOI = {10.1090/S0002-9947-1991-1010408-0},
       URL = {https://doi.org/10.1090/S0002-9947-1991-1010408-0},
}

@article {HGB93,
    AUTHOR = {Hirschhorn, Michael and Garvan, Frank and Borwein, Jon},
     TITLE = {Cubic analogues of the {J}acobian theta function
              {$\theta(z,q)$}},
   JOURNAL = {Canad. J. Math.},
  FJOURNAL = {Canadian Journal of Mathematics. Journal Canadien de
              Math\'ematiques},
    VOLUME = {45},
      YEAR = {1993},
    NUMBER = {4},
     PAGES = {673--694},
      ISSN = {0008-414X,1496-4279},
   MRCLASS = {33D10 (11F27)},
  MRNUMBER = {1227653},
MRREVIEWER = {Bruce\ C.\ Berndt},
       DOI = {10.4153/CJM-1993-038-2},
       URL = {https://doi.org/10.4153/CJM-1993-038-2},
}

@article {Sch13,
    AUTHOR = {Schultz, Daniel},
     TITLE = {Cubic theta functions},
   JOURNAL = {Adv. Math.},
  FJOURNAL = {Advances in Mathematics},
    VOLUME = {248},
      YEAR = {2013},
     PAGES = {618--697},
      ISSN = {0001-8708,1090-2082},
   MRCLASS = {33E05},
  MRNUMBER = {3107523},
MRREVIEWER = {Thomas\ R.\ Shemanske},
       DOI = {10.1016/j.aim.2013.08.021},
       URL = {https://doi.org/10.1016/j.aim.2013.08.021},
}

@article {CCLZ26,
    AUTHOR = {Chan, Heng Huat and Chan, Song Heng and Liu, Zhi-Guo and
              Zudilin, Wadim},
     TITLE = {On {S}chultz's generalization of {B}orweins' cubic identity},
   JOURNAL = {Adv. Math.},
  FJOURNAL = {Advances in Mathematics},
    VOLUME = {495},
      YEAR = {2026},
     PAGES = {Paper No. 110967, 29},
      ISSN = {0001-8708,1090-2082},
   MRCLASS = {11F27},
  MRNUMBER = {5060518},
       DOI = {10.1016/j.aim.2026.110967},
       URL = {https://doi.org/10.1016/j.aim.2026.110967},
}

@article {Zhu23,
    AUTHOR = {Zhu, Xiao-Jie},
     TITLE = {Taylor expansions of {J}acobi forms and linear relations among
              theta series},
   JOURNAL = {Dissertationes Math.},
  FJOURNAL = {Dissertationes Mathematicae},
    VOLUME = {590},
      YEAR = {2023},
     PAGES = {66},
      ISSN = {0012-3862,1730-6310},
   MRCLASS = {11F50 (11F11 11F27 11F37)},
  MRNUMBER = {4684413},
       DOI = {10.4064/dm880-12-2023},
       URL = {https://doi.org/10.4064/dm880-12-2023},
}

@book {Fre11,
    AUTHOR = {Freitag, Eberhard},
     TITLE = {Complex analysis. 2},
    SERIES = {Universitext},
      NOTE = {Riemann surfaces, several complex variables, abelian
              functions, higher modular functions},
 PUBLISHER = {Springer, Heidelberg},
      YEAR = {2011},
     PAGES = {xiv+506},
      ISBN = {978-3-642-20553-8},
   MRCLASS = {30-01 (30Fxx 32-01 32A99 32N05)},
  MRNUMBER = {2810329},
MRREVIEWER = {Hiroshige\ Shiga},
       DOI = {10.1007/978-3-642-20554-5},
       URL = {https://doi.org/10.1007/978-3-642-20554-5},
}

@book {Lan23,
    AUTHOR = {Lange, Herbert},
     TITLE = {Abelian varieties over the complex numbers---a graduate
              course},
    SERIES = {Grundlehren Text Editions},
 PUBLISHER = {Springer, Cham},
      YEAR = {2023},
     PAGES = {xii+384},
      ISBN = {978-3-031-25569-4; 978-3-031-25570-0},
   MRCLASS = {14Kxx},
  MRNUMBER = {4573077},
MRREVIEWER = {Jean\ Kieffer},
       DOI = {10.1007/978-3-031-25570-0},
       URL = {https://doi.org/10.1007/978-3-031-25570-0},
}

@article {Sag26,
    AUTHOR = {SageMath},
     TITLE = {The Sage Mathematics Software System (Version 10.7)},
   JOURNAL = {The Sage Developers},
      YEAR = {2026},
       URL = {https://www.sagemath.org},
}

@article {BBG94,
    AUTHOR = {Borwein, J. M. and Borwein, P. B. and Garvan, F. G.},
     TITLE = {Some cubic modular identities of {R}amanujan},
   JOURNAL = {Trans. Amer. Math. Soc.},
  FJOURNAL = {Transactions of the American Mathematical Society},
    VOLUME = {343},
      YEAR = {1994},
    NUMBER = {1},
     PAGES = {35--47},
      ISSN = {0002-9947,1088-6850},
   MRCLASS = {11B65 (11F27 33D10)},
  MRNUMBER = {1243610},
MRREVIEWER = {George\ E.\ Andrews},
       DOI = {10.1090/S0002-9947-1994-1243610-6},
       URL = {https://doi.org/10.1090/S0002-9947-1994-1243610-6},
}

@article {CGG15,
    AUTHOR = {Cl\'{e}ry, Fabien and van der Geer, Gerard and
              Grushevsky, Samuel},
     TITLE = {Siegel modular forms of genus 2 and level 2},
   JOURNAL = {Int. J. Math.},
  FJOURNAL = {International Journal of Mathematics},
    VOLUME = {26},
      YEAR = {2015},
    NUMBER = {5},
     PAGES = {1550034, 51},
      ISSN = {0129-167X,1793-6519},
      NOTE = {With an appendix by Shigeru Mukai},
       DOI = {10.1142/S0129167X15500342},
       URL = {https://doi.org/10.1142/S0129167X15500342},
}

@book {Jac12,
    AUTHOR = {Jacobi, Carl Gustav Jacob},
     TITLE = {Fundamenta nova theoriae functionum ellipticarum},
    SERIES = {Cambridge Library Collection---Mathematics},
 PUBLISHER = {Cambridge University Press},
   ADDRESS = {Cambridge},
      YEAR = {2012},
     PAGES = {vi+191},
      ISBN = {978-1-108-05200-9},
       DOI = {10.1017/CBO9781139344081},
       URL = {https://doi.org/10.1017/CBO9781139344081},
      NOTE = {Reprint of the 1829 original},
}

@article {Igu79,
    AUTHOR = {Igusa, Jun-ichi},
     TITLE = {On the ring of modular forms of degree two over {${\bf Z}$}},
   JOURNAL = {Amer. J. Math.},
  FJOURNAL = {American Journal of Mathematics},
    VOLUME = {101},
      YEAR = {1979},
    NUMBER = {1},
     PAGES = {149--183},
      ISSN = {0002-9327,1080-6377},
   MRCLASS = {10D20 (14K10 14K15)},
  MRNUMBER = {527830},
MRREVIEWER = {A.\ N.\ Andrianov},
       DOI = {10.2307/2373943},
       URL = {https://doi.org/10.2307/2373943},
}

@book {Fre91,
    AUTHOR = {Freitag, Eberhard},
     TITLE = {Singular modular forms and theta relations},
    SERIES = {Lecture Notes in Mathematics},
    VOLUME = {1487},
 PUBLISHER = {Springer-Verlag, Berlin},
      YEAR = {1991},
     PAGES = {iv+172},
      ISBN = {3-540-54704-5},
   MRCLASS = {11F46 (11F27 32N10)},
  MRNUMBER = {1165941},
MRREVIEWER = {Rolf\ Berndt},
       DOI = {10.1007/BFb0093829},
       URL = {https://doi.org/10.1007/BFb0093829},
}

@article {GW25,
    AUTHOR = {Gritsenko, Valery and Wang, Haowu},
     TITLE = {Orbits of {J}acobi forms and theta relations},
   JOURNAL = {Sci. China Math.},
  FJOURNAL = {Science China. Mathematics},
    VOLUME = {68},
      YEAR = {2025},
    NUMBER = {5},
     PAGES = {1015--1030},
      ISSN = {1674-7283,1869-1862},
   MRCLASS = {11F50 (14K25)},
  MRNUMBER = {4894295},
MRREVIEWER = {Haigang\ Zhou},
       DOI = {10.1007/s11425-023-2324-4},
       URL = {https://doi.org/10.1007/s11425-023-2324-4},
}

\end{document}